\documentclass[12pt,a4,oneside]{amsart}
\usepackage{amsmath,amsthm,amsfonts,amssymb,bbold,thmtools,array,nicematrix, float, hyperref,mathrsfs,url}
\usepackage{cleveref}
\usepackage[margin=2.5cm]{geometry}
\usepackage[toc,page]{appendix}

\newtheorem{thm}{Theorem}[section]
\newtheorem{lem}[thm]{Lemma}
\newtheorem{dfn}[thm]{Definition}
\newtheorem{prop}[thm]{Proposition}

\newtheorem{rem}[thm]{Remark}

\newcommand{\R}{\mathbb{R}}
\newcommand{\Q}{\mathbb{Q}}
\newcommand{\Z}{\mathbb{Z}}
\newcommand{\N}{\mathbb{N}}
\newcommand{\1}{\mathbb{1}}
\newcommand{\E}{\mathscr{E}}

\DeclareMathOperator{\rank}{rank}
\DeclareMathOperator{\vspan}{span}
\DeclareMathOperator{\image}{Im}
\DeclareMathOperator{\Sym}{Sym}
\DeclareMathOperator{\lcm}{lcm}

\title{Biangular lines with angles arccos(1/5) and arccos(3/5)}
\author{Paul Tricot}
\address{Graduate School of Information Sciences, Tohoku University}
\email{tricot.paul.alain.bernard.r3@dc.tohoku.ac.jp}

\subjclass{05C50,15B36,17B22,52C17,52C35}
\keywords{Combinatorics, Euclidean Geometry, Biangular lines, Integral Lattices}

\begin{document}

\begin{abstract}
The largest known biangular lines systems in dimension 7 through 20 are due to Ganzhinov and Szöllősi \cite{ganzhinov}, and the lines make angles $\arccos(\frac{1}{5})$ and $\arccos(\frac{3}{5})$. They are constructed by making use of the nice connection with integral lattices. We explore further the connection between biangular lines and integral lattices to classify the largest biangular line systems with these angles, in dimension 7 through 10. We also give a new biangular line system in dimension 15, that matches the size of the one described in \cite{ganzhinov}.
\end{abstract}

\maketitle

\section{Introduction}

A biangular line system is a set of lines in Euclidean space of dimension $d$ passing through the origin, such that any two lines are at one of two fixed angles. This is an extension of the well researched subject of equiangular lines, where only one angle is allowed instead of two. Many important results about biangular lines are due to M. Ganzhinov and F. Szöllősi \cite{ganzhinov2,ganzhinov}. In particular, they have classified the largest biangular line systems up to dimension 6. This result was achieved by searching for all possible Gram matrices.

\begin{thm}[\cite{ganzhinov}] \label{thm1-6}
    The maximal number of biangular lines in dimension $1$ through $6$ is:
    \begin{table}[H]
    \begin{center}
    \normalfont
    \begin{NiceTabular}{|c|c|c|c|c|c|c|}
        \hline
         Dimension & 1 & 2 & 3 & 4 & 5 & 6\\
         \hline
         Size & 1 & 5 & 10 & 12 & 24 & 40\\
         \hline
    \end{NiceTabular}
    \end{center}
    \end{table}
\end{thm}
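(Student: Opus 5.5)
This result is quoted from \cite{ganzhinov}; the plan below reconstructs their argument, which has a constructive half and a computer-assisted upper-bound half.

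\textbf{Lower bounds.} For each $d\le 6$ we exhibit an explicit system of the stated size.
\begin{itemize}
\item $d=1$: a single line.
\item $d=2$: five lines through the vertices of a regular pentagon. The angles between them are $\pi/5$ and $2\pi/5$, so only two values of $|\cos\theta|$ occur.
\item $d=3$: the ten diagonals of the dodecahedron (through opposite vertices). The Gram entries are $\pm 1/3$ and $\pm\sqrt5/3$.
\item $d=4$: the twelve lines spanned by the $D_4$ root system, with $|\cos|\in\{0,1/2\}$.
\item $d=5$: $24$ lines.
\item $d=6$: $40$ lines, which should come from the $E_6$ root system ($72$ roots, hence $36$ lines) or a related lattice configuration. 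I would take the $d=5$ and $d=6$ configurations from the paper.
\end{itemize}

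\textbf{Upper bounds.} Let $\{\pm v_i\}$ be unit vectors spanning the lines. The Gram matrix is $G=I+\alpha A+\beta B$, where $A,B$ are symmetric $\{0,\pm1\}$ matrices with disjoint supports that cover all off-diagonal positions. The matrix $G$ is positive semidefinite of rank at most $d$, and $G$ is determined only up to switching (conjugation by diagonal $\pm1$ matrices).

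\textbf{Step 1: bound the size for fixed angles.} Use polynomial and semidefinite-programming bounds, via Delsarte--Goethals--Seidel for two-distance sets of lines, together with absolute bounds such as $n\le\binom{d+3}{4}$.

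\textbf{Step 2: restrict the admissible angles.} If $n$ exceeds the claimed maximum, then $\alpha,\beta$ must satisfy algebraic constraints. When $n$ is large relative to $d$, the characteristic-polynomial and rank conditions force $\alpha,\beta$ to be roots of low-degree integer polynomials, in analogy with Lemmens--Seidel for equiangular lines.

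\textbf{Step 3: exhaustive search.} For each admissible angle pair, enumerate the switching classes of signed two-colored graphs on $n$ vertices, extending one vertex at a time. At each extension, prune any partial Gram matrix that fails to be positive semidefinite or has rank greater than $d$. Concluding that no $(m+1)$-system exists establishes the upper bound $m$ in dimension $d$.

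\textbf{Main obstacle.} The angles are continuous parameters, so the case analysis in Step 3 must be made finite. I would handle this by treating $G$ symbolically in $\alpha,\beta$. Every $(d+1)\times(d+1)$ principal minor must vanish, giving polynomial equations in $\alpha,\beta$, and each small configuration cuts the feasible angle region down to a finite set of points or curves. This requires orderly generation with canonical augmentation modulo switching, applied to symbolic Gram matrices. The computation is heaviest for $d=6$, where at least $41$ vectors must be ruled out, so the search must be organized to prune aggressively using Gröbner bases or resultants on the minors.
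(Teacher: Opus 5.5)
\textbf{There is a genuine gap in each half.}

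\textbf{Lower bounds.} The constructions for $d=5$ and $d=6$ are missing. You give no configuration there, and the source you suggest for $d=6$ falls short: the $72$ roots of $E_6$ span only $36$ lines in $\R^6$. (Lifted, they give the $72$ lines of $X_7$, but those live in $\R^7$.)

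The missing idea is the lifting of \Cref{prop-gha}. For a root system $X\subset\R^{d-1}$, the unit vectors $[\sqrt{2/5}\,x,\sqrt{1/5}]$, $x\in X$, have pairwise inner products $\tfrac{2}{5}(x,x')+\tfrac{1}{5}\in\{\pm\tfrac{1}{5},\pm\tfrac{3}{5}\}$. A root and its negative now give two \emph{different} lines, since their lifts have inner product $-\tfrac{3}{5}$. So $N$ roots yield $N$ lines rather than $N/2$.

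Since $D_{d-1}$ has $2(d-1)(d-2)$ roots, lifting $D_3=A_3$, $D_4$, $D_5$ gives $12$, $24$, $40$ lines in dimensions $4$, $5$, $6$. These are exactly the required lower bounds. Lifting $E_6,E_7,E_8$ in the same way gives the values $72,126,240$ of \Cref{thm7-23}. Your configurations for $d\le 4$ are correct.

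\textbf{Upper bounds.} This half describes a computation rather than giving an argument, and Steps~1 and~2 do not do what you claim.
\begin{itemize}
\item The absolute bound $\binom{d+3}{4}$ does settle $d=1,2$, where it equals $1$ and $5$. For $d=3,\dots,6$ it gives only $15,35,70,126$.
\item The Lemmens--Seidel analogue in Step~2 cannot simply be invoked. In the equiangular case with $n>2d$, the rank condition makes $-1/\alpha$ an eigenvalue of the single integer matrix $S$ with multiplicity at least $n-d>n/2$, which forces it to be rational. For $G=I+\alpha A+\beta B$, no single integer matrix carries that eigenvalue.
\item The nearest substitute is Larman--Rogers--Seidel applied to the traceless parts of $x\otimes x$, which form a spherical two-distance set in dimension $\binom{d+1}{2}-1$. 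It needs $n>d^2+d+1$, i.e.\ $n\ge 44$ for $d=6$. Even then it yields only one relation between $\alpha$ and $\beta$, not finitely many angle pairs.
\end{itemize}

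So Step~3 must run over a continuum of angle pairs. There, ``positive semidefinite of rank at most $d$'' is a semialgebraic feasibility condition in $(\alpha,\beta)$, not a pruning test. You flag this as the main obstacle, but you do not say how branches whose feasible set is still a curve are closed off. You also do not say why the enumeration up to $41$ vectors in $\R^6$ would terminate in practice.

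\textbf{What the paper does.} The paper gives no proof of this theorem. It quotes the result from \cite{ganzhinov}, remarking only that it was obtained by searching through all possible Gram matrices. Your Step~3 is the right kind of argument in spirit, but the upper bounds here rest on that citation. The lower bounds, by contrast, can and should be given explicitly via the lifting above.
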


In dimension 7 and higher, the maximal number of biangular lines is not known. All we have is explicit construction of biangular line systems, which give us a lower bound on the maximum.

\begin{thm}[\cite{ganzhinov}] \label{thm7-23}
    The maximal number of biangular lines in dimension $7$ through $23$ is at least:
    \begin{table}[H]
    \begin{center}
    \normalfont
    \begin{NiceTabular}{|c|c|c|c|c|c|c|c|}
        \hline
         Dimension & 7 & 8 & 9 & 10 & 11 & 12 & 13\\
         \hline
         Size &  72 & 126 & 240 & 256 & 276 & 296 & 336\\
         \hline
    \end{NiceTabular}
    \end{center}
    \begin{center}
    \normalfont
    \begin{NiceTabular}{|c|c|c|c|c|c|c|c|}
         \hline
         Dimension & 14 & 15 & 16 & 17-20 & 21 & 22 & 23\\
         \hline
         Size & 392 & 456 & 576 & 816 & 896 & 1408 & 2300\\
         \hline
    \end{NiceTabular}
    \end{center}
    \end{table}
\end{thm}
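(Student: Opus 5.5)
Since the statement is a list of lower bounds, the plan is to exhibit, in each dimension $d$ from $7$ to $23$, an explicit set of lines of the stated size and to check that only two angles occur. Following the lattice philosophy of \cite{ganzhinov}, I will realise each system as a set of vectors $\pm v$ of a fixed norm in an integral lattice (or a scaled copy of one). If all chosen vectors have norm $5$ and pairwise inner products lie in $\{0,\pm1,\pm3\}$, then the absolute cosines are $1/5$ and $3/5$. The orthogonal pairs must be excluded by the choice of the set, so the real task is to pick a large subset of norm-$5$ vectors with no orthogonal pairs. Alternatively, for some dimensions, I can use vectors of norm $2$ or $4$ in a root lattice or a Leech-type lattice, where the allowed inner products give angles such as $\arccos(1/2)$ together with $\pi/2$, or $\arccos(1/4)$ and $\arccos(1/2)$.

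The concrete cases I would try are as follows.

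\textbf{Dimensions $8$, $9$ and $7$.} In dimension $8$ the $120$ lines spanned by the $E_8$ roots have angles $60^\circ$ and $90^\circ$, and $126 = 63 \cdot 2$ suggests the $E_7$ roots; so dimension $8$ needs a refinement. The number $240$ in dimension $9$ suggests taking the $240$ roots of $E_8$ and lifting each root $r$ to $(r, c\,\epsilon(r))$ in $\R^9$, with a sign function $\epsilon$ chosen so that the lifted inner products take only two absolute values. In dimension $7$, $72$ is the number of roots of $E_6$. I would embed these roots, or suitable norm-$5$ vectors, into the lattice $E_7$.

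\textbf{Dimensions $22$ and $23$.} The sizes $2300$ and $1408$ are classical. The $2300$ lines in dimension $23$ come from minimal vectors of the Leech lattice with fixed inner product against a given vector, giving angles $\arccos(1/5)$ and $\arccos(3/5)$ after projection. In dimension $22$ the $1408$ lines come from the analogous construction related to the McLaughlin graph, via the $275$ or $1408$ configuration.

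\textbf{Intermediate dimensions $10$ to $21$.} For these I would start from the large systems already in hand and take cross-sections. The lines orthogonal to a chosen sublattice, or with prescribed inner product with a fixed vector, form a biangular system in a lower dimension. I would then count the surviving lines via the theta series or orbit counts under the automorphism group. This should give $256, 276, 296, 336, 392, 456, 576, 816, 896$.

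The main obstacle will be matching exactly these counts in the middle range. That requires finding the right fixed vector or sublattice in $\Lambda_{24}$ or in $O_{23}$, and verifying by computer the inner-product spectrum of each explicit Gram matrix. The verification itself is routine: compute the Gram matrix and check that its entries lie in $\{\pm1,\pm3\}/5$ off the diagonal, with rank $d$.
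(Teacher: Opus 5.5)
\textbf{There is a genuine gap: your proposal is a search strategy, not a proof.} The statement is a table of lower bounds, and the paper does not reprove it. It cites \cite{ganzhinov} and exhibits explicit systems only for $d=7,8,9,10$ and $15$. A proof therefore means an explicit, verified system in each dimension, and you complete this in none of them.

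\textbf{Dimensions $7$, $8$, $9$, $10$.} The missing idea is the lifting of Proposition~\ref{prop-gha}. Map every root $x$ of $E_{d-1}$, taking both $x$ and $-x$, to $[\sqrt{2/5}\,x,\sqrt{1/5}]\in\R^{d}$.
\begin{itemize}
\item Distinct roots have $(x,x')\in\{-2,-1,0,1\}$, so the lifted inner products are $\frac25(x,x')+\frac15\in\{\pm\frac15,\pm\frac35\}$.
\item The constant last coordinate makes $x$ and $-x$ two different lines. This gives $72,126,240$ lines at once.
\item In your norm-$5$ language this is $[\sqrt2\,x,1]$, whose inner products $2(x,x')+1$ are odd. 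Orthogonal pairs are excluded by parity, not by choosing a subset.
\end{itemize}
Your dimension-$9$ idea becomes exactly this once you take $c^2=\frac12$ (for roots of norm $2$) and $\epsilon$ even, for instance constant. Any other $c$ produces extra absolute values, and if $\epsilon(-r)=-\epsilon(r)$ the lifts of $\pm r$ are the same line.

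Your dimension-$7$ plan cannot work as stated. Embedding the $E_6$ roots in $E_7$ changes no angle: you still have only $36$ lines, at $60^\circ$ and $90^\circ$. Dimension $8$ you leave open, although it is the same lifting applied to $E_7$. For $d=10$ the paper uses the direct system $X_{10}$: the $256$ lines spanned by the even-weight vectors $\frac{1}{\sqrt{10}}[\pm1,\dots,\pm1]$. Their inner products are $\frac{10-2h}{10}\in\{\pm\frac15,\pm\frac35\}$ for Hamming distance $h\in\{2,4,6,8\}$.

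\textbf{Dimensions $10$ to $21$.} Your cross-section step fails as stated. Lines orthogonal to a fixed vector keep their angles, but you give no vector, sublattice or count producing $256,276,\dots,896$. Vectors with a prescribed inner product $c\neq0$ with a unit vector $w$ lie in an affine hyperplane, not in a lower-dimensional subspace. To lose a dimension you must project onto $w^\perp$, which sends $t\mapsto (t-c^2)/(1-c^2)$ and generally gives three or four absolute values. For example, $c=\frac15$ turns $\{\pm\frac15,\pm\frac35\}$ into $\{\frac7{12},\frac16,-\frac14,-\frac23\}$.

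\textbf{Dimensions $22$ and $23$.} Your stated angles are wrong for the natural Leech construction with these numbers. Take the $4600$ minimal vectors $x$ with $(x,v)=2$ for a fixed minimal vector $v$. Their projections to $v^\perp$ come in $\pm$ pairs and give $2300$ lines of norm $3$. The inner products are $(x,x')-1\in\{-1,0,1\}$, because $(x,x')=-1$ would force $(x,v-x')=3$, which is impossible in the Leech lattice. So the angles are $90^\circ$ and $\arccos\frac13$, not $\arccos\frac15$ and $\arccos\frac35$. This matches the paper's remark that only $d=7$--$20$ use those angles. Also, $1408$ has nothing to do with the McLaughlin graph, which has $275$ vertices. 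One way to obtain it is to take those of the $2300$ lines orthogonal to a fixed one.
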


In dimension 7 through 20, the largest biangular line systems with sizes in \Cref{thm7-23} have angles $\arccos(\frac{1}{5})$ and $\arccos(\frac{3}{5})$. In addition, by restricting ourselves to these angles, a nice connection with integral lattices can be used. We use this connection to classify the largest biangular line systems with these angles, in dimension 7 through 10. Additionally, we provide a construction for a new system of $456$ biangular lines in dimension $15$.

\section{Preliminaries}
A system of lines in Euclidean space, all passing through the origin, can be represented by a set of unit vectors. The angle between two lines can then be represented by the absolute value of the inner product between their representative vectors. For $x \in \R^n$, let $N(x) := (x,x)$ be the norm of $x$, and let $S^{n-1} := \{ x \in \R^n \mid N(x) = 1\}$ denote the unit sphere in dimension $n$. For a set $X \subseteq \R^n$, let $A(X) := \{ (x,x') \mid x,x' \in X, x \ne x'\}$.

\begin{dfn}
    A set $X \subset S^{d-1}$ represents a biangular line system if $A(X) \subseteq \{ \pm\alpha, \pm\beta\}$ for some $0 \le \alpha < \beta < 1$.
\end{dfn}

For $n,k \in \N$, The Gegenbauer polynomials $Q^n_k$ are defined as:
\begin{align*}
    & Q^n_0(\lambda) = 1,\\
    & Q^n_1(\lambda) = n \lambda,\\
    & \text{For } k \ge 1, \frac{k+1}{n+2k}Q^n_{k+1}(\lambda) = \lambda Q^n_k(\lambda)-\frac{n+k-3}{n+2k-4}Q^n_{k-1}(\lambda).
\end{align*}

\begin{thm}[\cite{delsarte}] \label{thm-del}
    Let $n \in \N$ and $X \subseteq S^{n-1}$. Then for all $k \in \N$
    $$ \sum_{x,x' \in X} Q^n_k((x,x')) \ge 0$$
\end{thm}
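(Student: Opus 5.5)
The plan is to prove the Delsarte inequality by identifying $Q^n_k$ with a reproducing kernel of the space of degree-$k$ spherical harmonics on $S^{n-1}$. This makes the double sum a squared norm, which is automatically nonnegative.

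Let $\mathrm{Harm}_k(\R^n)$ be the space of harmonic homogeneous polynomials of degree $k$ in $n$ variables, restricted to $S^{n-1}$. Equip it with the inner product $\langle f,g\rangle = \int_{S^{n-1}} fg\,d\sigma$, where $\sigma$ is the normalized surface measure. Fix an orthonormal basis $e_1,\dots,e_{h_k}$ of this space.

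The first step is the addition formula:
\[ \sum_{i=1}^{h_k} e_i(x)e_i(y) = Q^n_k((x,y)) \quad\text{for all } x,y\in S^{n-1}. \]
The argument runs as follows.
\begin{enumerate}
\item The kernel $K(x,y)=\sum_i e_i(x)e_i(y)$ does not depend on the choice of orthonormal basis.
\item $\mathrm{Harm}_k$ is invariant under $O(n)$, so $K(gx,gy)=K(x,y)$ for every $g\in O(n)$.
\item Since $O(n)$ acts transitively on pairs of unit vectors with a fixed inner product, $K(x,y)=P((x,y))$ for some polynomial $P$ of degree at most $k$.
\item For fixed $y$, the function $x\mapsto P((x,y))$ lies in $\mathrm{Harm}_k$ and is invariant under the stabilizer of $y$. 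So $P$ is the unique zonal harmonic up to scaling.
\item The zonal harmonics are orthogonal for the weight $(1-t^2)^{(n-3)/2}$ on $[-1,1]$. Hence $P$ is a multiple of the Gegenbauer polynomial of index $(n-2)/2$.
\item Check that the three-term recurrence stated in the paper produces exactly this orthogonal family. Then fix the scalar by the normalization $P(1)=K(x,x)$. Integrating over $x$ gives $K(x,x)=h_k=\dim \mathrm{Harm}_k$, so we need $Q^n_k(1)=h_k$.
\end{enumerate}

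The main obstacle is this normalization and identification. I would first verify $Q^n_k(1)=\binom{n+k-1}{k}-\binom{n+k-3}{k-2}$ by induction from the recurrence. Then I would check that the recurrence coefficients match the standard Gegenbauer recurrence under the rescaling $Q^n_k = \frac{n+2k-2}{n-2}C^{(n-2)/2}_k$ (handling $n=2$ separately via Chebyshev polynomials).

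Granting the addition formula, the conclusion is immediate:
\[ \sum_{x,x'\in X} Q^n_k((x,x')) = \sum_{i=1}^{h_k}\sum_{x,x'\in X} e_i(x)e_i(x') = \sum_{i=1}^{h_k}\Big(\sum_{x\in X} e_i(x)\Big)^2 \ge 0. \]
For $k=0$ the sum equals $|X|^2$, and for $k=1$ it equals $n\,N\!\left(\sum_{x\in X} x\right)$. Both are consistent with this formula and serve as sanity checks on the normalization.
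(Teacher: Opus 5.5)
The paper gives no proof of this statement (it is quoted from \cite{delsarte}), and your argument is the standard one from that source: the addition formula $Q^n_k((x,y))=\sum_i e_i(x)e_i(y)$ turns the double sum into $\sum_i\bigl(\sum_{x\in X}e_i(x)\bigr)^2\ge 0$. The proposal is correct, since the paper's recurrence does match $Q^n_k=\frac{n+2k-2}{n-2}C^{(n-2)/2}_k$ and yields $Q^n_k(1)=\binom{n+k-1}{k}-\binom{n+k-3}{k-2}$; the only case outside your orthogonality argument is $n=1$, which is trivial because $Q^1_k(\pm 1)=0$ for $k\ge 2$.
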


For $X \subseteq \R^n$, let $\langle X \rangle_\Z := \{ \sum a_i x_i \mid a_i \in \Z, x_i \in X\}$ denote the lattice generated by $X$. We will call a root any vector of norm $2$. For a lattice $L$, let  $S(L) := \{ x \in L \mid N(x) = 2\}$ denote the set of its roots. A lattice $L$ is an integral lattice if $A(L) \subseteq \Z$, and a root lattice if it is integral and generated by roots.

For two lattices $L_1, L_2$, we will denote their orthogonal sum by $L_1 \oplus L_2 := \{ [x,y] \mid x \in L_1, y\in L_2\}$, where $[x,y]$ is the vector obtained by concatenation of coordinates of $x$ and $y$. We can further define for a lattice $L$ and positive integer $d$, $L^d$ to be the orthogonal sum of $d$ copies of $L$. A root lattice is said to be irreducible if it is not the orthogonal sum of two root lattices of rank at least one. Then any root lattice is an orthogonal sum of irreducible root lattices. Let $e_1, \dots, e_n$ be the standard basis of $\R^n$, and $\1$ the vector with $1$ on all coordinates. The irreducible root lattices are known (\cite{ebeling}). Up to the action of the orthogonal group, they are:
\begin{itemize}
    \item For $n \ge 1$, $A_n := \langle e_i - e_j \mid 1 \le i,j \le n+1 \rangle_\Z$,
    \item For $n \ge 4$, $D_n := \langle e_i \pm e_j \mid 1 \le i,j \le n \rangle_\Z$,
    \item $E_8 := \langle D_8, \frac{1}{2}\1 \rangle_\Z$,
    \item $E_7 := E_8 \cap x^\perp$ where $x$ is a root of $E_8$,
    \item $E_6 := E_7 \cap y^\perp$ where $y$ is a root of $E_7$.
\end{itemize}
Note that since the group of isometries of $E_8$ acts transitively on its roots (\cite{humphreys}), $E_7$ can be defined with any root $x$ of $E_8$. Similarly $E_6$ can be defined with any root of $E_7$. Furthermore, for a lattice $L \subset \R^d$ we will denote its dual lattice $L^* := \{ x \in \R^d \mid \forall y \in L, (x,y) \in \Z \}$. As an example, we compute the dual of the root lattice $D_n$, that will be used later.

\begin{lem} \label{lem-Dndual}
    Let $n$ be a positive integer. Then $D_n^* = \Z^n \cup \left(\frac{1}{2}+\Z\right)^n$.
\end{lem}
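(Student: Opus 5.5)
The plan is to prove the two inclusions directly from the generating set $\{e_i \pm e_j \mid i \ne j\}$ of $D_n$. Since $D_n$ is generated by these vectors, $x \in D_n^*$ holds if and only if $(x, e_i \pm e_j) = x_i \pm x_j \in \Z$ for all $i \ne j$. Everything reduces to this coordinate condition. I implicitly take $n \ge 2$, so that distinct indices $i \ne j$ exist. For $n=1$ the generating set is empty (or degenerate), and the statement has to be read with the convention that the relevant case is $n \ge 2$; the paper only uses $D_n$ for $n \ge 4$ anyway.

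For the inclusion $\Z^n \cup \left(\frac{1}{2}+\Z\right)^n \subseteq D_n^*$, I would check the generators.
\begin{itemize}
\item If $x \in \Z^n$, then clearly $x_i \pm x_j \in \Z$.
\item If $x \in \left(\frac{1}{2}+\Z\right)^n$, write $x_i = \frac{1}{2} + a_i$ with $a_i \in \Z$. Then $x_i + x_j = 1 + a_i + a_j \in \Z$ and $x_i - x_j = a_i - a_j \in \Z$.
\end{itemize}
By linearity of the inner product, $(x,y) \in \Z$ then holds for every $y \in D_n$.

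For the reverse inclusion, take $x \in D_n^*$ and fix $i \ne j$. Adding and subtracting the two conditions gives $2x_i = (x_i+x_j)+(x_i-x_j) \in \Z$, so every coordinate lies in $\frac{1}{2}\Z$. Moreover $x_i - x_j \in \Z$ for all $i,j$, so all coordinates are congruent modulo $\Z$. Since $x_1 \in \frac{1}{2}\Z$, there are two cases.
\begin{itemize}
\item If $x_1 \in \Z$, then all $x_i \in \Z$.
\item If $x_1 \in \frac{1}{2}+\Z$, then all $x_i \in \frac{1}{2}+\Z$.
\end{itemize}
Hence $x \in \Z^n \cup \left(\frac{1}{2}+\Z\right)^n$.

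There is no real obstacle here; the only point requiring care is making sure $n \ge 2$, so that for each $i$ some $j \ne i$ exists to derive $2x_i \in \Z$.
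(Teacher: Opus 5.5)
Your proof is correct and follows the same route as the paper: reduce membership in $D_n^*$ to the conditions $x_i \pm x_j \in \Z$ coming from the generators, deduce $2x_i \in \Z$ and $x_i - x_j \in \Z$, and conclude that all coordinates lie together in $\Z$ or in $\frac{1}{2}+\Z$. Your restriction to $n \ge 2$ is unnecessary under the paper's definition. There $D_n$ is generated by $e_i \pm e_j$ with $1 \le i,j \le n$, including $i=j$, so $2e_i$ is itself a generator and gives $2x_i \in \Z$ directly, which covers $n=1$ as well.
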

\begin{proof}
\begin{align*}
    D_n^* &= \{ x \in \R^n \mid \forall y \in D_n, (x,y) \in \Z \}\\
     &= \{ x \in \R^n \mid \forall i,j \in \{1, \dots,n\}, x_i+x_j \in \Z \text{ and } x_i-x_j \in Z \}\\
     &= \left\{ x \in \left(\frac{1}{2}\Z\right)^n \;\middle|\; \forall i,j \in \{1, \dots,n\}, x_i \in x_j+\Z \right\}\\
     &= \Z^n \cup \left(\frac{1}{2} + \Z\right)^n.
\end{align*}
\end{proof}

We also compute the dual of the root lattice $E_7$.
\begin{lem} \label{lem-E7dual}
    Consider $E_7$ defined as $E_8 \cap \frac{1}{2}\1^\perp$, then
    $$E_7^* = \left\{ \frac{\alpha}{4}\1+x \;\middle|\; \alpha \in \{0,1,2,3\}, x \in\Z^8, 2\alpha+x_1+\dots+x_8=0 \right\}.$$
\end{lem}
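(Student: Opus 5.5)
Since $E_7 = E_8 \cap \frac{1}{2}\1^\perp$ spans the hyperplane $H := \frac{1}{2}\1^\perp$, the dual $E_7^*$ is understood inside $H$. The plan is to show $E_7^* = \pi(E_8)$, where $\pi$ is the orthogonal projection onto $H$, and then to compute $\pi(E_8)$ explicitly. This uses that $E_8$ is unimodular ($E_8^* = E_8$), which follows by the same method as \Cref{lem-Dndual}: $E_8^* \subseteq D_8^* = \Z^8 \cup (\frac{1}{2}+\Z)^8$, and pairing with $\frac{1}{2}\1$ keeps exactly the vectors with integral pairing, which is $E_8$.

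\textbf{Inclusion $\pi(E_8) \subseteq E_7^*$.} For $v \in E_8$ and $y \in E_7$ we have $(\pi(v),y) = (v,y) \in \Z$, since $y \in H$.

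\textbf{Inclusion $E_7^* \subseteq \pi(E_8)$.} Let $w \in E_7^*$. The map $E_7 \to \Z$, $y \mapsto (w,y)$, extends to a homomorphism $E_8 \to \Z$. Indeed, $E_8 = E_7 \oplus \Z r$ as groups, where $r = \frac{1}{2}\1$ is a root: for $v \in E_8$, the vector $v - (v,r)r$ lies in $E_7$ and $(v,r) \in \Z$. Define $\phi(v) := (w, v-(v,r)r)$. By unimodularity, $\phi$ is represented by some $u \in E_8$, so $(u,v) = \phi(v)$ for all $v \in E_8$. Pairing with $y \in E_7$ and using $(u,y) = (\pi(u),y)$ gives $(\pi(u)-w, y) = 0$ for all $y \in E_7$. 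Since $E_7$ spans $H$, this forces $w = \pi(u)$.

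\textbf{Computing $\pi(E_8)$.} The projection is $\pi(v) = v - \frac{s(v)}{8}\1$, where $s(v)$ is the coordinate sum. For $v \in D_8$ we have $s(v) = 2m$ with $m \in \Z$, so $\pi(v) = v - \frac{m}{4}\1$. Write $\alpha \equiv -m \pmod 4$ with $\alpha \in \{0,1,2,3\}$. Then $\pi(v) = \frac{\alpha}{4}\1 + x$ with $x = v - \frac{m+\alpha}{4}\1 \in \Z^8$, and $s(\pi(v)) = 0$ gives $2\alpha + \sum x_i = 0$. For $v \in D_8 + \frac{1}{2}\1$ we get $\pi(v) = \pi(v - \frac{1}{2}\1)$, because $\1$ projects to $0$, so this coset contributes nothing new.

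Conversely, any $\frac{\alpha}{4}\1 + x$ with $x \in \Z^8$ and $2\alpha + \sum x_i = 0$ equals $\pi(x)$. Here $x \in \Z^8$ has even sum $-2\alpha$, so $x \in D_8 \subseteq E_8$. Note that $\pi(x) = x + \frac{2\alpha}{8}\1$, which matches.

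The main obstacle is the surjectivity step, i.e. showing $E_7^* \subseteq \pi(E_8)$. This rests on the unimodularity of $E_8$ and the splitting $E_8 = E_7 \oplus \Z r$. The remaining steps are routine bookkeeping with residues mod $4$.
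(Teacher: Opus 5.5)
Your strategy of identifying $E_7^*$ with the orthogonal projection $\pi(E_8)$ onto $H$, via $E_8^*=E_8$, is sound. However, the step you flagged as the main obstacle fails as written. The splitting $E_8=E_7\oplus\Z r$ with $r=\frac12\1$ is false, and so is the claim $v-(v,r)r\in E_7$. Since $N(r)=2$, the map $v\mapsto v-(v,r)r$ is the reflection in $r^\perp$, not the projection, and $(v-(v,r)r,r)=-(v,r)$. For $v=e_1+e_2$ you have $(v,r)=1$, and $v-r=\frac12(1,1,-1,\dots,-1)$ has coordinate sum $-2$. In fact $e_1+e_2\notin E_7\oplus\Z r$ at all: pairing $e_1+e_2=y+cr$ with $r$ gives $1=2c$. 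The orthogonal sum $E_7\oplus\Z r\cong E_7\oplus A_1$ has index $2$ in $E_8$.

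As a result your $\phi$ is not $\Z$-valued. Since $w\in H=r^\perp$, you get $\phi(v)=(w,v)-(v,r)(w,r)=(w,v)$. Take $w=(-\frac34,-\frac34,\frac14,\dots,\frac14)$, which lies in $E_7^*$; then $\phi(e_1+e_2)=-\frac32$. So unimodularity cannot be applied to this $\phi$, and the inclusion $E_7^*\subseteq\pi(E_8)$ is not established.

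The repair is small: in the correction term, use a vector of $E_8$ that pairs to $1$ with $r$, instead of $r$ itself. Take $u:=e_1+e_2\in E_8$, so $(u,r)=1$, and set $\phi(v):=(w,v-(v,r)u)$. Since $(v,r)\in\Z$ and $(v-(v,r)u,r)=0$, the vector $v-(v,r)u$ lies in $E_8\cap H=E_7$. Hence $\phi$ is a $\Z$-valued homomorphism that agrees with $(w,\cdot)$ on $E_7$, and the rest of your argument goes through. Equivalently, $E_7=E_8\cap H$ is primitive, so $E_8/E_7\cong\Z$ and $E_8=E_7\oplus\Z u$. Your other steps are correct: $E_8^*=E_8$ via $D_8^*$, the inclusion $\pi(E_8)\subseteq E_7^*$, and the explicit computation of $\pi(E_8)$ with the residue $\alpha$ mod $4$.

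Once fixed, your proof takes a different route from the paper's. The paper works directly with the roots of $E_7$: the vectors $e_i-e_j$ and $\frac12(\pm1,\dots,\pm1)$ with four signs of each kind. Summing seven of these integrality conditions along a $7$-cycle gives $8x_i\in 2\Z$. So all coordinates lie in $\frac14\Z$ and are congruent mod $\Z$, and the remaining parity condition is then automatic. Your argument instead uses the general fact that $M^*=\pi(L)$ for a primitive sublattice $M$ of a unimodular lattice $L$. That explains why the answer has this shape, but it needs unimodularity and primitivity. The paper's argument is a self-contained coordinate computation.
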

\begin{proof}
\begin{align*}
    E_7^* = & \left\{ x\in \R^8 \cap \1^\perp \;\middle|\; \forall y \in E_7, (x,y) \in \Z\right\}\\
    = & \left\{ x\in \R^8 \cap \1^\perp \;\middle|\; \forall y \in S(E_7), (x,y) \in \Z\right\}\\
    = & \left\{ x\in \R^8 \cap \1^\perp \;\middle|\; \begin{aligned}
        &\forall 1 \le i < j \le 8, x_i-x_j \in \Z,\\
        &\forall \sigma \in \Sym(8), \frac{1}{2}(x_{\sigma(1)}+\dots+x_{\sigma(4)}-x_{\sigma(5)}-\dots-x_{\sigma(8)}) \in \Z
    \end{aligned} \right\}.
\end{align*}
Fix $x \in E_7^*$ and $i \in \{1, \dots, 8\}$. Let $\sigma$ be a permutation of $\{1, \dots, 8\} \setminus \{i\}$, and denote $\{i_1, \dots, i_7\} =\{1, \dots, 8\} \setminus \{i\}$. Then
\begin{align*}
    2\Z \ni & \sum_{k=0}^{6}(x_i + x_{\sigma^k(i_1)} + x_{\sigma^k(i_2)} + x_{\sigma^k(i_3)} - x_{\sigma^k(i_4)} - \dots - x_{\sigma^k(i_7)})\\
    = & 7x_i - x_{i_1} - \dots - x_{i_7}\\
    = & 8x_i.
\end{align*}
Thus every coordinate of $x$ belongs to $\frac{1}{4}\Z$, so
\begin{align*}
    E_7^* = & \left\{ \frac{\alpha}{4}\1+x \;\middle|\; \begin{aligned}
        & \alpha \in \{0,1,2,3\}, x \in\Z^8, 2\alpha+x_1+\dots+x_8=0,\\
        & \forall \sigma \in \Sym(8), x_{\sigma(1)}+\dots+x_{\sigma(4)}-x_{\sigma(5)}-\dots-x_{\sigma(8)} \in 2\Z
    \end{aligned} \right\}\\
    = & \left\{ \frac{\alpha}{4}\1+x \;\middle|\; \begin{aligned}
        & \alpha \in \{0,1,2,3\}, x \in\Z^8, 2\alpha+x_1+\dots+x_8=0,\\
        & x_1 + \dots + x_8 \in 2\Z
    \end{aligned} \right\}.
\end{align*}
\end{proof}

\begin{thm}[\cite{cameron}] \label{thm-cam}
    Let $L$ be an integral lattice of rank $n$, and $X \subseteq S(L)$ such that $A(X) \subseteq \{ 0, 1\}$.
    \begin{itemize}
        \item If $n = 1$, then $|X| \le 1$,
        \item If $n = 2$, then $|X| \le 2$,
        \item If $n = 3$, then $|X| \le 4$,
        \item If $n = 6$, then $|X| \le 16$,
        \item If $n = 7$, then $|X| \le 27$,
        \item If $n = 8$, then $|X| \le 36$,
        \item If $n \ge 9$, then $|X| \le \frac{1}{2} n (n-1)$.
    \end{itemize}
\end{thm}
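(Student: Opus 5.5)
The plan is to reduce the problem to irreducible root lattices and then handle each type separately. Let $X \subseteq S(L)$ with $A(X) \subseteq \{0,1\}$, and put $M := \langle X \rangle_\Z$. Since $L$ is integral and $M \subseteq L$ is generated by norm-$2$ vectors, $M$ is a root lattice of rank $m \le n$. By the classification recalled above, $M = M_1 \oplus \dots \oplus M_r$ with each $M_i$ one of $A_k$, $D_k$, $E_6$, $E_7$, $E_8$.

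\emph{Reduction to components.} Every root of an orthogonal sum of root lattices lies in one summand. Indeed, if $x = [x_1, x_2]$ with both parts nonzero, then $N(x_1), N(x_2)$ are positive even integers, so $N(x) \ge 4$. Hence $X = \bigsqcup_i (X \cap M_i)$. For a root lattice $R$, let $f(R)$ be the maximum size of a set of roots of $R$ with pairwise inner products in $\{0,1\}$. Then
\[
|X| \le \sum_i f(M_i), \qquad \sum_i \rank M_i \le n.
\]
The theorem then reduces to two tasks: computing $f$ on the irreducible types, and maximizing the sum over all admissible decompositions of total rank $\le n$. Whenever $\sum_i \rank M_i < n$, one can add copies of $A_1$ (with $f(A_1) = 1$), so it suffices to consider total rank exactly $n$.

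\emph{Irreducible types.} For $A_k$ the roots are $e_i - e_j$. The condition that no two chosen roots have inner product $-1$ or $-2$ forbids a chosen $e_i - e_j$ together with a chosen $e_j - e_l$. So the set of indices used as the positive entry is disjoint from the set used as the negative entry, which gives $f(A_k) = \lfloor (k+1)^2/4 \rfloor$. For $D_k$, the set $\{e_i + e_j\}$ gives $k(k-1)/2$. For the upper bound, I would analyse the signed pattern on each pair $\{i,j\}$. Two roots on the same pair can only be $e_i + e_j$ and $e_i - e_j$ (up to an overall sign), since the other pairs on the same support have inner product $-2$. Such a double pair forces strong sign restrictions on every root meeting $i$ or $j$. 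A counting or charging argument should then show that each such doubled pair costs at least one pair elsewhere. For $E_6$, $E_7$, $E_8$ the root systems are finite (72, 126 and 240 roots), so $f$ can be computed explicitly. I would fix one root $x \in X$ and split the others into those orthogonal to $x$, which are roots of the orthogonal complement (of type $A_5$, $D_6$ or $E_7$ respectively), and those with inner product $1$ with $x$ (the $20$, $32$ or $56$ roots $y$ with $(x,y)=1$). Then I would argue recursively, or simply verify by computer, that the values are $16$, $27$ and $36$. This matches the small-dimensional entries of the statement, which are attained by $E_6$, $E_7$ and $E_8$.

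\emph{Optimization and main obstacle.} The final step compares sums such as $f(E_8) + f(A_1)$, $f(D_9)$ and $f(A_k) + \cdots$ against the claimed bound. Here I expect the real difficulty. Naively, $E_8 \oplus A_1$ in rank $9$ would give $37 > 36$. So either the argument must use more than additivity, or $f(E_8)$ must be reconsidered. The likely fix is that for $n \ge 9$ one must exploit the global Gram-matrix constraint. The matrix $2I + A$, with $A$ the adjacency matrix of the graph on $X$ given by inner product $1$, is positive semidefinite of rank $\le n$. Combining this with Theorem~\ref{thm-del}-type linear programming, or with the observation that an $E_8$-component configuration of maximal size spans a lattice whose extension is constrained, should rule out such configurations. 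Hence the step I would attack first is to verify the exact values of $f(E_7)$ and $f(E_8)$ and check the $n = 9$ case against all decompositions. After that, an induction on $n$ using the $D_n$ bound $n(n-1)/2$ should handle $n \ge 10$, since every $A$- or $E$-component contributes at most this amount per unit of rank.
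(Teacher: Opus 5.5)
\textbf{Verdict: there is a genuine gap at $n=9$, and it cannot be closed, because the $n=9$ clause of the statement is false as written.}

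Your reduction is sound. A root of an orthogonal sum of root lattices lies in a single summand, so $|X|\le\sum_i f(M_i)$. The component values you aim for are also correct: $f(A_k)=\lfloor (k+1)^2/4\rfloor$, $f(D_k)=\binom{k}{2}$ for $k\ge 4$, $f(E_6)=16$, $f(E_7)=27$, $f(E_8)=36$.

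The problem is the step you flag for $n=9$: the configuration you found is real. In the paper's model $E_8=\langle D_8,\frac12\1\rangle_\Z$, take the $36$ roots
\[
\tfrac12\1,\qquad e_1\pm e_j\ \ (2\le j\le 8),\qquad \tfrac12\1-e_j-e_k\ \ (2\le j<k\le 8).
\]
Their pairwise inner products lie in $\{0,1\}$. Apart from the immediate values $0$ and $1$, put $\delta=|\{j\}\cap\{k,l\}|$; then $(e_1+e_j,\tfrac12\1-e_k-e_l)=1-\delta$, $(e_1-e_j,\tfrac12\1-e_k-e_l)=\delta$, and $(\tfrac12\1-e_j-e_k,\tfrac12\1-e_l-e_m)=|\{j,k\}\cap\{l,m\}|$. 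Adjoin the root $[0_{\R^8},\sqrt2]$ of the $A_1$ summand. This gives $37$ roots of the rank-$9$ integral lattice $E_8\oplus A_1$ with $A(X)\subseteq\{0,1\}$. These are actual vectors, so their Gram matrix is automatically positive semidefinite of rank $9$ and satisfies every Delsarte or semidefinite constraint. No ``global Gram-matrix'' argument can exclude them, and $f(E_8)$ cannot be revised downward since $36$ is attained. So the clause ``$n\ge 9\Rightarrow |X|\le\frac12 n(n-1)$'' fails at $n=9$ (the paper only quotes it), and the honest output of your additive bound is a maximum of $37$ for $n=9$.

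What your method does prove is the bound for $n\ge 10$, but you should replace the vague ``induction'' with an explicit count.
\begin{itemize}
\item Let $\epsilon(R)=f(R)-\binom{\rank R}{2}$. Then $\epsilon=1$ for $A_1,A_2,A_3,E_6$, $\epsilon(E_7)=6$, $\epsilon(E_8)=8$, and $\epsilon\le 0$ otherwise, so always $\epsilon(R)\le\rank R$.
\item Pad with copies of $A_1$ so the component ranks $r_i$ satisfy $\sum r_i=n$. Then $\binom n2-\sum_i\binom{r_i}{2}=\frac12\sum_i r_i(n-r_i)$, and this slack absorbs $\sum_i\epsilon(M_i)$ whenever every $n-r_i\ge 2$.
\item The remaining cases are a single component, or a rank-$(n-1)$ component plus $A_1$. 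For $n\ge 10$ the large component is of type $A$ or $D$, with $\epsilon\le 0$, so the bound holds. For $n=9$ the exceptional case is exactly $E_8\oplus A_1$.
\end{itemize}
The correct statement therefore either requires $n\ge 10$ (with $37$ for $n=9$), or assumes the graph represented by $X$ is connected. In the connected case $\langle X\rangle_\Z$ is irreducible, and $36$ is right at $n=9$.

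Your $D_k$ bound also needs an actual argument; here is one that works. Two chosen roots with the same support $\{i,j\}$ are, after sign changes of coordinates, $e_i+e_j$ and $e_i-e_j$. Then no other chosen root involves $j$, so the $k-2$ pairs $\{j,l\}$ with $l\ne i$ carry no root. Distinct doubled pairs have distinct such private indices, so there are $D\le k-1$ of them. Counting the forced empty pairs gives $|X|\le\binom k2-D(k-3)+\binom D2\le\binom k2$ for $k\ge 4$.

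Finally, the paper only invokes the $n=9$ case where $37$ suffices ($51>37$ in dimension $9$ and $45>37$ in dimension $10$), so its later results are unaffected.
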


\begin{rem}
    In \Cref{thm-cam}, the maximal subsets for $n \in \{1,2,3\}$ correspond to subsets of $S(A_n)$. For $n \in \{6,7,8\}$ they are subsets of $S(E_n)$, and in the remaining cases they are subsets of $S(D_n)$.
\end{rem}

The Gram matrix of a set of vectors $X$ is the matrix with rows and column indexed by the elements of $X$, and with $x,y$ entry $(x,y)$. When considering a set of roots $X$ with $A(X) \subseteq \{0,1\}$, the Gram matrix of $X$ has the form $2I+A$, where $A$ is the adjacency matrix of a graph. We will say that $X$ represents the graph. Then, we have more information on the sets $X$ that reach the maximal size in \Cref{thm-cam} for $n \ge 9$. In the following theorem, $K_n$ is the complete graph on $n$ vertices, and $L(G)$ is the line graph of the graph $G$.

\begin{thm} [\cite{cameron}] \label{thm-cam2}
    Let $L$ be an integral lattice of rank $n \ge 9$, and $X \subseteq S(L)$ such that $A(X) \subseteq \{ 0, 1\}$. Assume $|X| = \frac{1}{2} n (n-1)$. Then $X$ represents $L(K_n)$.
\end{thm}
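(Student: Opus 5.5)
Our plan is to pass to the lattice $\langle X\rangle_\Z$, identify it as a root lattice, and then use the known classification of root systems together with the extremal case of \Cref{thm-cam}. Since $X$ consists of roots with pairwise inner products in $\{0,1\}$, the lattice $L' := \langle X \rangle_\Z \subseteq L$ is a root lattice of rank $m \le n$. By \Cref{thm-cam}, $|X| \le \frac12 m(m-1)$ when $m \ge 9$. For small $m$, a direct check of the listed bounds (all at most $36$) shows that $|X| = \frac12 n(n-1) \ge 36$ with $n\ge 9$ forces $m = n$. So $X$ spans $\R^n$ and realizes the extremal bound inside the root lattice $L'$.

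Next we decompose $L' = \bigoplus L_i$ into irreducible root lattices of ranks $n_i$. Each $x \in X$ is a root, and roots of an orthogonal sum lie in a single summand. Hence $X$ splits as $X = \bigsqcup X_i$ with $X_i \subseteq S(L_i)$, and the graph represented by $X$ is the disjoint union of the graphs represented by the $X_i$. For $n_i\ge 3$ we have $\frac12 n_i(n_i-1)$ at least the small-rank bounds, except at $n_i \in\{6,7,8\}$, where the bounds are $16,27,36$ against $15,21,28$. Using superadditivity of $\frac12 k(k-1)$, we compare $\sum |X_i|$ with $\frac12(\sum n_i)(\sum n_i -1)$. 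A single summand of rank $n$ beats any splitting: the deficit of a split is at least $\sum_{i<j} n_in_j$, which exceeds the excess of at most $8$ that the $E$-type summands can provide, once $n \ge 9$. We conclude that $L'$ is irreducible of rank $n\ge 9$, hence $L' \cong A_n$ or $D_n$ (the $E_k$ have rank at most 8).

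Finally we identify the graph. In $D_n$ with roots $\pm e_i\pm e_j$, we can aim to show that a maximal set with inner products in $\{0,1\}$ can be normalized to $\{e_1 + e_j\}$-type configurations. The cleanest route is to map each root $\epsilon e_i + \delta e_j$ to the pair $\{i,j\}$. Two roots on the same pair have inner product $0$ or $\pm 2$; the value $2$ is excluded, and $0$ requires the pair $e_i+e_j, e_i-e_j$. Counting then shows that each pair is used at most once, up to a structured exception we must rule out. Inner product $1$ or $0$ then corresponds to sharing an index or not, after a sign normalization: flipping coordinates so that shared indices carry consistent signs. The case $A_n$ is analogous, with roots $e_i-e_j$ in $n+1$ coordinates. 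Since $\frac12 n(n-1)$ roots with nonnegative inner products must essentially lie in $\{e_i - e_{n+1}\}\cup\{e_i-e_j : i<j\le n\}$-type families, we instead show that $|X|\le \frac12 n(n-1)$ with equality impossible unless the configuration embeds as $L(K_n)$.

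The main obstacle is this final step: controlling the signs so that the adjacency relation is exactly "pairs sharing a point", which yields $L(K_n)$. We would try an induction on $n$, removing one index and applying the bound in rank $n-1$ to show that exactly $n-1$ vectors involve each index.
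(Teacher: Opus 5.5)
The paper only quotes this result from \cite{cameron}, so there is no proof to compare against. Judged on its own, your argument has a genuine gap, and at $n=9$ the gap cannot be closed: the statement itself, and even the $n=9$ case of \Cref{thm-cam}, is false there.

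Your claim that $|X|=\frac12 n(n-1)$ forces $m=n$ fails for $n=9$. The rank-$8$ bound is $36=\frac12\cdot 9\cdot 8$, and it is attained. Realize $E_8=A_8\cup(A_8+g)\cup(A_8-g)$ in $\1^\perp\subset\R^9$ with $g=\frac13(1,1,1,1,1,1,-2,-2,-2)$. For a $3$-set $T\subset\{1,\dots,9\}$, let $v_T\in E_8$ have entries $-\frac23$ on $T$ and $\frac13$ elsewhere; then $(v_T,v_{T'})=|T\cap T'|-1$. Take the $28$ roots $v_T$ with $1\in T$ together with the $8$ roots $e_i-e_1$ ($2\le i\le 9$). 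All their mutual inner products lie in $\{0,1\}$, using $(e_i-e_1,v_T)=(v_T)_i+\frac23$, yet they span only $\1^\perp$.

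This gives three counterexamples:
\begin{itemize}
\item Inside $L=E_8\oplus\Z$, these roots form an $X$ of size $36$ whose Gram matrix has rank $8$. But $2I+A(L(K_9))$ has rank $9$, since the eigenvalue $-2$ of $L(K_9)$ has multiplicity $27$.
\item Swapping one $v_T$ for the root of an orthogonal $A_1$ gives a spanning but disconnected counterexample.
\item Adding that root instead gives $37$ roots in $E_8\oplus A_1$, contradicting \Cref{thm-cam} at $n=9$.
\end{itemize}
Your splitting count breaks at exactly this point. The bounds $1,2,4$ in ranks $1,2,3$ also exceed $\binom{k}{2}$ by one, so for $9=8+1$ the deficit $n_1n_2=8$ is smaller than the excess $8+1$. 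The theorem should read $n\ge 10$, which is the only case the paper actually uses (\Cref{prop-fc-d10}).

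For $n\ge 10$ your two reductions do go through once the bookkeeping is done correctly.
\begin{itemize}
\item Splittings: $\sum_{i<j}n_in_j=\frac12\sum_i n_i(n-n_i)$. Each term $\frac12 n_i(n-n_i)$ dominates the excess of the $i$-th summand ($8$ for $E_8$, $6$ for $E_7$, at most $1$ otherwise), strictly for at least one summand.
\item Lower rank: a configuration of rank $m<n$ has at most $\max(37,\binom{m}{2})<\binom n2$ elements.
\end{itemize}

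What is still missing is the identification step, which you leave as a plan, and your treatment of $A_n$ is wrong: $\{e_i-e_j : i<j\}$ contains pairs with inner product $-1$. In $A_n$, nonnegativity forces every index to occur only as a head or only as a tail. Hence $|X|\le\lfloor (n+1)^2/4\rfloor<\binom n2$, and $A_n$ is excluded outright.

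In $D_n$ no induction on the full statement is needed. Suppose two roots of $X$ lie on the same pair $\{i,j\}$. After renaming and a sign change they are $e_i\pm e_j$, and every other root involving $j$ has a negative inner product with one of them. So the remaining roots lie in a copy of $D_{n-1}$, and $|X|\le 2+\binom{n-1}{2}<\binom n2$; the same argument gives the $D_k$ bound $\binom k2$ for $k\ge 4$ by induction. Therefore each of the $\binom n2$ pairs carries exactly one root. Two roots sharing an index $i$ have inner product equal to the product of their signs at $i$, so all roots through $i$ carry the same sign there. Flipping coordinates turns $X$ into $\{e_i+e_j\}$, which represents $L(K_n)$.
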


We will focus on biangular line systems with inner products $\pm\frac{1}{5}$ and $\pm\frac{3}{5}$. The largest known biangular line systems in dimension $7$ up to $20$ have these inner products. In dimension $7, 8$ and $9$ they are constructed by applying the following proposition to the root lattices $E_6$, $E_7$ and $E_8$.

\begin{prop}[Lifting, \cite{ganzhinov}] \label{prop-gha}
    Let $X \subset \mathbb{R}^{d}$ be a set of norm $2$ vectors in an integral lattice.
    Then $Y:=\Bigl\{ [\sqrt{\frac{2}{5}} x, \sqrt{\frac{1}{5}}] \mid x \in X \Bigr\}$ represents a biangular line system in $\mathbb{R}^{d+1}$ with $A(Y)=\{\pm\frac{1}{5}, \pm\frac{3}{5}\}$.
\end{prop}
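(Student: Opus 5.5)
The proof is a direct computation of norms and inner products, with one bookkeeping point about which inner products can occur. Implicitly we take $X$ to contain no two antipodal vectors, so that $Y$ has no antipodal pair of lines; more precisely we assume $x' \neq -x$ for distinct $x,x'\in X$.

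First I will compute the norm of a lifted vector $y=[\sqrt{\frac{2}{5}}x,\sqrt{\frac{1}{5}}]$. Since $N(x)=2$, we get
$$N(y)=\tfrac{2}{5}\cdot 2+\tfrac{1}{5}=1,$$
so $Y\subset S^{d}$. The map $x\mapsto y$ is injective, so $|Y|=|X|$.

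Next, for distinct $x,x'\in X$ with images $y,y'$, I will compute
$$(y,y')=\tfrac{2}{5}(x,x')+\tfrac{1}{5}.$$
The key step is to pin down the possible values of $(x,x')$.
\begin{itemize}
\item Integrality of the lattice gives $(x,x')\in\Z$.
\item Cauchy--Schwarz gives $|(x,x')|\le 2$.
\item Equality $|(x,x')|=2$ would force $x'=\pm x$. The case $x'=x$ is excluded since the vectors are distinct, and $x'=-x$ is excluded by the standing assumption.
\end{itemize}
Hence $(x,x')\in\{-1,0,1\}$, and the corresponding values of $(y,y')$ are $-\frac{1}{5}$, $\frac{1}{5}$ and $\frac{3}{5}$. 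These all lie in $\{\pm\frac{1}{5},\pm\frac{3}{5}\}$, so $Y$ represents a biangular line system with $\alpha=\frac{1}{5}$ and $\beta=\frac{3}{5}$.

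The only real subtlety is the antipodal case. If $x'=-x$, then $(y,y')=-\frac{4}{5}+\frac{1}{5}=-\frac{3}{5}$, which still lies in the allowed set, so the inner-product condition holds even then. Allowing antipodal pairs only means that the same pair of lines is not counted twice, which is the same as choosing one representative from each $\pm$ pair. Read this way, the conclusion $A(Y)\subseteq\{\pm\frac{1}{5},\pm\frac{3}{5}\}$ holds in every case.

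Finally, one should interpret ``$A(Y)=\{\dots\}$'' as containment, because not every value need actually occur for a given $X$. Nothing beyond Cauchy--Schwarz and integrality is needed.
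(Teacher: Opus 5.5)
Your proof is correct and follows the paper's own justification. The paper cites this proposition without proof and only remarks that the map $t\mapsto\frac{2}{5}t+\frac{1}{5}$ shifts the integer inner products $-2,-1,0,1$ of distinct norm-$2$ lattice vectors to $-\frac{3}{5},-\frac{1}{5},\frac{1}{5},\frac{3}{5}$. You are also right that ``$=$'' should be read as ``$\subseteq$'', since for small $X$ not every value occurs.

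Your no-antipodes assumption is unnecessary, and your description of that case is wrong. The vectors $x$ and $-x$ do not give ``the same pair of lines''. They lift to two distinct, non-antipodal unit vectors, both with last coordinate $\sqrt{\frac{1}{5}}>0$, at inner product $-\frac{3}{5}$. So $Y$ never contains two vectors spanning the same line, and it represents exactly $|X|$ lines. This is how the paper obtains $72=|S(E_6)|$ lines in $X_7$ rather than $36$.
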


By applying this transformation to the vectors of norm $2$, the set of inner products is linearly shifted from consecutive integers to $\pm\frac{1}{5}$ and $\pm\frac{3}{5}$. This is possible because these inner products separate the interval $[-1, 1]$ into segments of equal length $\frac{2}{5}$. It is also possible to use a similar method with vectors of norm $3$.

\begin{prop}[Switching root] \label{prop-sw}
    Let $Y \subset \mathbb{R}^{d+1}$ be a set of norm $3$ vectors in an integral lattice. Assume that there exists $r \in \mathbb{R}^{d+1}$ such that for all $y \in Y$ we have $( y,r ) = 1$, and $A(Y) \subseteq \{-1,0,1,2 \}$. Then $X:=\Bigl\{\sqrt{\frac{2}{5}}(y-\frac{1}{2}r) \mid y \in Y\Bigr\}$ represents a biangular line system in $\mathbb{R}^d$ with $A(X)\subseteq\{\pm\frac{1}{5}, \pm\frac{3}{5}\}$.
    
    Conversely, let $X$ be a set of unit vectors in $\R^d$ such that $A(X)\subseteq\{\pm\frac{1}{5}, \pm\frac{3}{5}\}$. Then with $Y:=\Bigl\{[\sqrt{\frac{5}{2}}x, \sqrt{\frac{1}{2}}] \mid x \in X\Bigr\}$ and $r=[0_{\mathbb{R}^d}, \sqrt{2}]$, we have $A(Y) \subseteq \{-1,0,1,2 \}$, and for all $y \in Y$ we have $( y,r ) = 1$.
\end{prop}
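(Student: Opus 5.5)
The proof is a direct computation in both directions: expand the inner products using bilinearity and the hypotheses, then check how the resulting affine maps act on the allowed value sets.

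\emph{First direction.} Fix $y \in Y$ and set $x = \sqrt{\frac{2}{5}}(y - \frac{1}{2}r)$. Since $(y,r)=1$ for all $y \in Y$, we need $N(r)$ to compute norms. I expect the intended normalisation to be $N(r)=2$, which is what the converse produces ($r=[0,\sqrt2]$). I would argue it is forced whenever $|Y|\ge 1$ and $X$ is to consist of unit vectors, and otherwise simply read it into the hypothesis, since $r$ is meant to be a root as the name says. With $N(r)=2$ we get
\[
N\!\left(y-\tfrac12 r\right) = 3 - 1 + \tfrac14\cdot 2 = \tfrac52 ,
\]
so $N(x)=\frac25\cdot\frac52=1$. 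For distinct $y,y'\in Y$,
\[
\left(y-\tfrac12 r,\ y'-\tfrac12 r\right) = (y,y') - \tfrac12 - \tfrac12 + \tfrac12 = (y,y') - \tfrac12 .
\]
Hence $(x,x') = \frac25\left((y,y')-\frac12\right)$, and the values $-1,0,1,2$ map to $-\frac35,-\frac15,\frac15,\frac35$ respectively. Finally, $X$ lies in the $d$-dimensional space $r^\perp$, because $\left(y-\frac12 r, r\right) = 1 - 1 = 0$. Identifying $r^\perp$ with $\R^d$ gives the first claim.

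\emph{Converse.} For $y=[\sqrt{5/2}\,x,\sqrt{1/2}]$ we have $N(y)=\frac52+\frac12=3$ and $(y,r)=\sqrt{1/2}\cdot\sqrt2=1$. For distinct $y,y'$, $(y,y')=\frac52(x,x')+\frac12$, which sends $\pm\frac15,\pm\frac35$ to $-1,0,1,2$.

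The only real obstacle is the normalisation of $r$, since the statement does not specify $N(r)$. I would make this explicit by noting that unit norm of $X$ forces $N(r)=2$, so that $r$ is a root, which is consistent with the terminology.
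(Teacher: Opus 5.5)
Your computation is correct and is the natural verification. The paper states this proposition without proof, so there is nothing to compare against. Both directions reduce to $(y-\frac12 r,\,y'-\frac12 r)=(y,y')-\frac12$ and $(y,y')=\frac52(x,x')+\frac12$, which carry $\{-1,0,1,2\}$ onto $\{-\frac35,-\frac15,\frac15,\frac35\}$ and back.

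You are also right that $N(r)=2$ is a hypothesis missing from the statement. However, it is not ``forced'': deducing it from the unit-norm conclusion would be circular. For example, $N(y)=3$ and $r=\frac13 y$ satisfy $(y,r)=1$ but give $N(x)=\frac56$. So the hypothesis must simply be added. It holds in every application in the paper, where $r\in\sqrt{2}S^{d}$.
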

We will use this connection between biangular lines with inner products $\pm\frac{1}{5}$ and $\pm\frac{3}{5}$ and integral lattices to classify those biangular line systems that reach the largest size. Given a system of lines, any transformation from the orthogonal group produces another system of lines with the same mutual inner products. If two sets $X,X' \subseteq S^{d-1}$ are in the same orbit of the orthogonal group, we will say that they are equivalent and write $X \simeq X'$. Additionally, a vector $x \in S^{d-1}$ represents the same line as $-x$, so we will say that $X \simeq X'$ if $X'$ can be obtained from $X$ by replacing vectors by their opposite. Finally, if $Y:=\Bigl\{[\sqrt{\frac{5}{2}}x, \sqrt{\frac{1}{2}}] \mid x \in X\Bigr\}$ and $Y':=\Bigl\{[\sqrt{\frac{5}{2}}x, \sqrt{\frac{1}{2}}] \mid x \in X'\Bigr\}$, we will say that $Y$ and $Y'$ are equivalent whenever $X$ and $X'$ are, and write $Y \simeq Y'$. We will classify biangular line systems up to equivalence.


\section{Biangular lines and Integral lattices}

In this section, we establish some properties of biangular line systems, independent of the dimension. In the lemmas that follow, $d$ is a positive integer, and $X$ is a subset of the unit sphere $S^{d-1}$ with $A(X) \subseteq \{ \pm\frac{1}{5}, \pm\frac{3}{5}\}$. We further define $Y:=\Bigl\{[\sqrt{\frac{5}{2}}x, \sqrt{\frac{1}{2}}] \mid x \in X\Bigr\}$ and $r=[0_{\mathbb{R}^d}, \sqrt{2}]$. We will consider, for $i \in \{\pm\frac{1}{5}, \pm\frac{3}{5}\}$, $X_{i}(x_0) := \{ x \in X \cup (-X) \mid (x_0,x) = i\}$.

\begin{lem} \label{lem-z}
    Fix $x_0 \in X$, and assume that $-\frac{1}{5} \notin A(X_{\frac{3}{5}}(x_0))$. Then there exists a set of roots $Z$, such that $A(Z) \subseteq \{0,1\}$, $|Z|=|X_{\frac{3}{5}}(x_0)|$, and $\dim(\vspan(Z)) = \dim(\vspan(X_{\frac{3}{5}}(x_0),x_0))$.
\end{lem}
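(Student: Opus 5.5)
Set $W := X_{\frac{3}{5}}(x_0)$. The plan is to map each $x \in W$ to a root by subtracting a fixed multiple of $x_0$ and rescaling, so that the inner products inside $W$ become $0$ and $1$. Concretely, we take
$$z_x := \sqrt{\tfrac{5}{2}}\,(x - x_0) \quad\text{or more generally}\quad z_x := c\,(x - \lambda x_0),$$
and choose $c,\lambda$ so that $N(z_x)=2$ and the cross inner products lie in $\{0,1\}$.

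First we list which inner products can occur between two distinct elements $x,x' \in W$. Since $x,x' \in X\cup(-X)$ are distinct and represent different lines, we have $x' \neq -x$. Indeed, if $x'=-x$ then $(x_0,x') = -\frac{3}{5}$, which is impossible. Could $x$ and $x'$ come from the same line? If $x' = \pm x$, the sign is forced by $(x_0,\cdot) = \frac{3}{5}$, so $x'=x$. Hence $(x,x') \in \{\pm\frac{1}{5}, \pm\frac{3}{5}\}$, and by hypothesis $-\frac{1}{5}$ is excluded, leaving $\{\frac{1}{5}, \pm\frac{3}{5}\}$. We want a second exclusion, or else an affine map sending the three values $\{-\frac{3}{5}, \frac{1}{5}, \frac{3}{5}\}$ to roots. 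Trying $z_x = x - \lambda x_0$, we get
$$N(z_x) = 1 - \tfrac{6}{5}\lambda + \lambda^2, \qquad (z_x,z_{x'}) = (x,x') - \tfrac{6}{5}\lambda + \lambda^2 .$$
Choosing $\lambda = \frac{3}{5}$ (the projection onto $x_0^\perp$) gives norm $\frac{16}{25}$ and inner products $(x,x') - \frac{9}{25} \in \{-\frac{24}{25}, -\frac{4}{25}, \frac{6}{25}\}$. After rescaling to norm $2$ these become $\{-3, -\frac{1}{2}, \frac{3}{4}\}$, which are not integers, so this choice does not work. The cleaner route is through $Y$. For $x \in W$, let $y = [\sqrt{\frac{5}{2}}x, \sqrt{\frac{1}{2}}]$, and let $y_0$ be the corresponding vector for $x_0$. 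Then $N(y)=3$, $(y,y_0) = \frac{5}{2}\cdot\frac{3}{5} + \frac{1}{2} = 2$, and $(y,y') = \frac{5}{2}(x,x') + \frac{1}{2} \in \{1, 2, -1\}$ for $(x,x') = \frac{1}{5}, \frac{3}{5}, -\frac{3}{5}$. Now set $z := y - y_0$. Then $N(z) = 3 - 4 + 3 = 2$, so $z$ is a root, and
$$(z,z') = (y,y') - 2 - 2 + 3 = (y,y') - 1 \in \{0, 1, -2\}.$$
The value $-2$ would mean $z' = -z$, which forces $y + y' = 2y_0$; it must be ruled out, or else handled by choosing a sign. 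Two roots with inner product $-2$ are opposite. So if $(x,x') = -\frac{3}{5}$ then $z' = -z$, and we keep only one of each such pair. That would change the cardinality, however, so I expect this to be the main obstacle: we must show that $(x,x') = -\frac{3}{5}$ cannot occur within $W$. The equality $z'=-z$ gives $y' = 2y_0 - y$. Its last coordinate is $2\sqrt{\frac{1}{2}} - \sqrt{\frac{1}{2}} = \sqrt{\frac{1}{2}}$, which is consistent, and its first part gives $x' = 2x_0 - x$. Then $N(x') = 4 - \frac{12}{5} + 1 = \frac{13}{5} \neq 1$, a contradiction. So $-\frac{3}{5}$ never occurs, and $A(Z) \subseteq \{0,1\}$ with $Z := \{y - y_0 \mid y \in Y_W\}$. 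Distinct $x$ give distinct $z$, so $|Z| = |W|$.

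Finally, for the dimension count, $Z$ lies in $\vspan(Y_W, y_0)$. Since every $y$ has the same last coordinate, the differences $y - y_0$ are $[\sqrt{\frac{5}{2}}(x - x_0), 0]$. Hence
$$\vspan(Z) \cong \vspan\{x - x_0 \mid x \in W\}.$$
We then need $\dim \vspan\{x - x_0\} = \dim \vspan(W, x_0)$, which requires $x_0 \in \vspan\{x - x_0\}$ modulo a dimension drop of one. This is the delicate linear-algebra point, and I would check it using the Gram matrix. Alternatively, if the exact statement intends $\dim$ counted in the lifted space, I would instead use $\vspan(Z, r)$. Both options should be compared against how the lemma is used later.
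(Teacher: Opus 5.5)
Your construction of $Z$ is correct and coincides with the paper's up to an isometry. The paper projects $W:=X_{\frac{3}{5}}(x_0)$ onto $x_0^\perp$, rescales by $\sqrt{5/2}$ and appends the constant coordinate $\sqrt{2/5}$. Your $z_x=\sqrt{5/2}\,(x-x_0)$ has $x_0^\perp$-component $\sqrt{5/2}\,p_V(x)$ and $x_0$-component $-\sqrt{2/5}$. Your exclusion of $-\frac{3}{5}$ validly replaces the paper's observation that the Gram matrix of $x_0,x,x'$ is not positive semidefinite: roots with inner product $-2$ are opposite, which forces $x'=2x_0-x$, a vector of norm $\frac{13}{5}$. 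So $N(z_x)=2$, $A(Z)\subseteq\{0,1\}$ and $|Z|=|W|$ are fully established.

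The gap is the dimension claim, which you leave open. It in fact cannot be proved as stated.

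\emph{What holds for your $Z$.} One has $\vspan(Z)+\R x_0=\vspan(W,x_0)$, hence $\dim\vspan(Z)\le\dim\vspan(W,x_0)\le\dim\vspan(Z)+1$. Equality in the first inequality holds if and only if $x_0\in\vspan(Z)$.

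\emph{Why equality can fail.} It fails for \emph{every} admissible $Z$ whenever $W\cup\{x_0\}$ is linearly independent, since then $\dim\vspan(W,x_0)=|W|+1$ cannot be spanned by $|W|$ vectors. Concretely, take $X=\{x_0,x_1,\dots,x_k\}$ with $k\le 4$, $(x_0,x_i)=\frac{3}{5}$ and $(x_i,x_j)=\frac{1}{5}$. The hypothesis holds, and the Gram matrix is positive definite because its Schur complement is $\frac{4}{5}I-\frac{4}{25}J$. Your $Z$ is then $k$ orthogonal roots, spanning only $k$ of the $k+1$ dimensions. So no Gram-matrix check will deliver equality in general. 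Passing to $\vspan(Z,r)$ does not rescue it either: it just adds one to the count and is wrong in the complementary case.

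\emph{What to prove instead.} The inequality $\dim\vspan(Z)\le\dim\vspan(W,x_0)$ is immediate for your $Z$, since each $z_x$ lies in $\vspan(W,x_0)$. It is also all the later arguments use. Combined with \Cref{thm-cam}, it either gives a contradiction (\Cref{lem-fc-d9}), or it forces $\dim\vspan(Z)$ to equal the ambient dimension, which yields equality a posteriori (\Cref{lem-fc-d7}, \Cref{lem-fc-d8}, \Cref{prop-fc-d10}).

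The paper's own justification, $\dim\vspan(Z)=\dim\vspan(p_V(W))-1=\dim\vspan(W,x_0)$, is also incorrect as written, since $\dim\vspan(W,x_0)=\dim\vspan(p_V(W))+1$. The right repair is to state and prove the inequality rather than the equality.
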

\begin{proof}
First, $-\frac{3}{5} \notin A(X_\frac{3}{5}(x_0))$ because the matrix $$\begin{bmatrix}
    1 & \frac{3}{5} & \frac{3}{5}\\
    \frac{3}{5} & 1 & -\frac{3}{5}\\
    \frac{3}{5} & -\frac{3}{5} & 1\\
\end{bmatrix}$$ is not positive semidefinite, so is not a Gram matrix. Therefore $A(X_\frac{3}{5}(x_0)) \subseteq \{\frac{1}{5}, \frac{3}{5}\}$.

Define $V:=x_0^\perp$. For $x \in X_\frac{3}{5}(x_0)$, the orthogonal projection $p_V$ onto $V$ is given by $p_V(x) = x-\frac{3}{5}x_0$. For $x, x' \in X_\frac{3}{5}(x_0)$ distinct,
\begin{align*}
    (p_V(x), p_V(x')) & = (x,x')-\frac{9}{25} \\
     & \in \{ -\frac{4}{25}, \frac{6}{25}\},
\end{align*}
and $p_V(X_\frac{3}{5}(x_0)) \subset \frac{4}{5}S^{k-2}$. Define $Z:= \Bigl\{[\sqrt{\frac{5}{2}}x, \sqrt{\frac{2}{5}}] \mid x \in p_V(X_\frac{3}{5}(x_0))\Bigr\}$. Then $Z$ is a set of norm 2 vectors, and $A(Z) \subseteq \{ 0,1\}$. Furthermore, we have $|X_\frac{3}{5}(x_0)| = |p_V(X_\frac{3}{5}(x_0))| = |Z|$. Lastly, $\dim(\vspan(Z)) = \dim(\vspan(p_V(X_{\frac{3}{5}}(x_0)))) - 1= \dim(\vspan(X_{\frac{3}{5}}(x_0),x_0))$.
    
\end{proof}

The next lemma gives a condition on the minimal number of roots in $L$. This will allow us to eliminate some possibilities for the root sublattice of $L$, that is to say the sublattice of $L$ generated by its roots.

\begin{lem} \label{lem-min-roots}
    Define $L:=\langle Y,r\rangle_\Z$. Then $|S(L) \cap r^\perp| \ge \frac{2}{|X|} |\left\{ (x,x')\in X^2 \;\middle|\; (x,x') = \pm\frac{3}{5} \right\}|$.
\end{lem}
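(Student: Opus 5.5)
For each $y \in Y$ and each partner giving inner product $\pm\frac{3}{5}$, I will produce a root of $L$ orthogonal to $r$, and then count how often each root can arise.

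First the basic computations. Write $y_x = [\sqrt{\frac{5}{2}}x, \sqrt{\frac{1}{2}}]$. Then
\[
N(y_x)=\tfrac{5}{2}+\tfrac{1}{2}=3, \qquad (y_x,r)=1, \qquad (y_x,y_{x'})=\tfrac{5}{2}(x,x')+\tfrac{1}{2}.
\]
So $(y_x,y_{x'})$ equals $2$ if $(x,x')=\frac{3}{5}$ and $-1$ if $(x,x')=-\frac{3}{5}$. Also $N(r)=2$, so $r$ is a root of $L$, and $L$ is integral by \Cref{prop-sw}.

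Next, the construction. For an ordered pair $(x,x')$ of distinct elements of $X$ with $(x,x')=\pm\frac{3}{5}$, define
\[
s = y_x - y_{x'} \ \text{ if } (x,x')=\tfrac{3}{5}, \qquad s = y_x + y_{x'} - r \ \text{ if } (x,x')=-\tfrac{3}{5}.
\]
In the first case $N(s)=3+3-4=2$ and $(s,r)=1-1=0$. In the second case
\[
N(s)=3+3+2-2-2+2(-1)=2, \qquad (s,r)=1+1-2=0.
\]
So in both cases $s\in S(L)\cap r^\perp$. In coordinates, $s=[\sqrt{\frac{5}{2}}(x\mp x'),0]$, that is, $s$ is $\sqrt{\frac{5}{2}}(x\mp x')$ padded with a zero.

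Then the counting. Let $P$ be the set of ordered pairs in the statement, and let $f:P\to S(L)\cap r^\perp$ send $(x,x')$ to $s$, which is the vector $\sqrt{\frac{5}{2}}(x-\epsilon x')$ with $\epsilon=\pm1$ the sign of $(x,x')$. Now fix a root $s=[\sqrt{\frac{5}{2}}u,0]$, so $N(u)=\frac{4}{5}$. For a pair in $f^{-1}(s)$ we have $(x,u)=1-\epsilon(x,x')=1-\frac{3}{5}=\frac{2}{5}$. The first element $x$ determines the pair, since $\epsilon x' = x-u$ fixes $x'$ up to sign, and $x'\in X$ with $X\cap(-X)=\emptyset$.

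The crude bound is therefore $|f^{-1}(s)|\le |X|$, which gives $|S(L)\cap r^\perp| \ge |P|/|X|$. This is off by the factor $2$, so the sharper point, and the main obstacle, is to count at most $|X|/2$ preimages per root, or equivalently to pair up preimages of $s$ and $-s$. I would handle this by sign symmetry. Swapping the roles in a pair gives $f(x',x)=\sqrt{\frac{5}{2}}(x'-\epsilon x)=-\epsilon f(x,x')$. So for $\epsilon=1$, the map $(x,x')\mapsto(x',x)$ sends $f^{-1}(s)$ to $f^{-1}(-s)$. For $\epsilon=-1$ the swap gives the same root, and then I would instead pair $s$ with $-s$ via the bijection $(x,x')\mapsto(x',x)$ combined with the observation that $x$ and $x'$ both satisfy $(\cdot,u)=\frac{2}{5}$.

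The cleanest route is to define the quantity $\sum_{s\in S(L)\cap r^\perp}|\{x\in X : (y_x,s)=\pm1\}|$ and bound it both ways. Each $y_x$, being norm $3$, has $|(y_x,s)|\le 1$ unless $s\in\{\pm(y_x-y_{x'})\}$-type roots. For each $x$, the roots $s$ with $(y_x,s)=1$ of the form above correspond exactly to the partners $x'$, and $s$ and $-s$ both register $x$ with value $\pm1$. So each pair contributes to two roots, $\pm s$, while each root registers at most $|X|$ points. That yields $|S(L)\cap r^\perp|\,|X|\ge 2|P|$, as required. I expect the care to lie in making this double-counting injective, namely checking that distinct partners $x'$ of a fixed $x$ give distinct roots, which holds by the coordinate formula.
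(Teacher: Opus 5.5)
Your final double count is correct and is essentially the paper's argument with different bookkeeping. The paper maps the $2|P|$ ordered pairs in $(Y\cup(r-Y))^2$ with inner product $2$ to $y-y'$, which produces exactly your roots $\pm f(x,x')$, and bounds each fibre by $|Y|$. Its norm-$7$ computation, which rules out $y_0$ and $r-y_0$ lying in the same fibre, plays the role of your observation that $(y_x,f(x,x'))=1$ and $(y_x,-f(x,x''))=-1$, which keeps the roots $\pm f(x,x')$ for a fixed $x$ pairwise distinct.

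Drop the abandoned swap-pairing paragraph. Also drop the aside that $|(y_x,s)|\le 1$ for all other roots $s$: it is false in general. For $X=X_7$, take $e\in S(E_6)$ to be the root underlying $x$; then $s=[e,0,0]$ lies in $S(L)\cap r^\perp$ and has $(y_x,s)=2$. Neither the swap-pairing paragraph nor this aside is used in the final inequality.
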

\begin{proof}
Define \begin{align*}
    f : \{ (y,y') \in (Y \cup (r-Y))^2 \mid (y,y')=2\} & \longrightarrow S(L) \cap r^\perp\\
    (y,y') & \longmapsto y-y'.
\end{align*}
Then for $u \in \image(f)$, \begin{align*}
    |f^{-1}(\{u\})| &= |\{ (y,y') \in (Y \cup (r-Y))^2 \mid y-y'=u\}|\\
     & = |\{y \in Y \cup (r-Y) \mid y-u \in Y \cup (r-Y)\}|.
\end{align*}
Fix $y_0 \in Y \cup (r-Y)$ such that $y_0-u \in Y \cup (r-Y)$. We have $3=(y_0-u,y_0-u)=5-2(y_0,u)$, so $(y_0,u)=1$. Moreover, $u\in r^\perp$, so $N(r-y_0-u)=7$, and $r-y_0 \notin \left\{y \in Y \cup (r-Y) \mid y-u \in Y \cup (r-Y)\right\}$. Thus we have $|f^{-1}(\{u\})| \le |Y|$, and
\begin{align*}
    |S(L) \cap r^\perp| &\ge \frac{1}{|Y|} \left|\left\{ (y,y') \in (Y \cup (r-Y))^2 \;\middle|\; (y,y')=2\right\}\right|\\
    &=\frac{2}{|Y|} \left|\left\{ (y,y') \in Y^2 \;\middle|\; (y,y') \in \{-1,2\} \right\}\right|\\
    &=\frac{2}{|X|} \left\{ (x,x')\in X^2 \;\middle|\; (x,x') = \pm\frac{3}{5} \right\}.
\end{align*}
\end{proof}

For the last result of this section, we will use a special configuration of three lines with specific angles. We will call a special triangle a triple of vectors $x_1, x_2, x_3 \in X \cup (-X) \subset S^{d-1}$ that verifies $(x_1, x_2)=(x_2, x_3)=\frac{3}{5}$ and $(x_1, x_3)=\frac{-1}{5}$. After applying the transformation $Y:=\Bigl\{[\sqrt{\frac{5}{2}}x, \sqrt{\frac{1}{2}}] \mid x \in X\Bigr\}$, this special triangle corresponds to a configuration $y_1, y_2, y_3 \in Y \cup (r-Y) \subset \sqrt{3}S^{d}$ such that $(y_1,y_2)=(y_2,y_3)=2$ and $(y_1,y_3)=0$. We will also call such configuration a special triangle.

\begin{lem} \label{lem-pvc}
    Define $L:=\langle Y,r \rangle_\Z$, let $R:=\langle S(L) \rangle_\Z$ be the root sublattice of $L$, and define $V:=\vspan(L) \cap R^\perp$. Assume that $Y \cup (r-Y)$ contains a special triangle. Then the orthogonal projection onto $V$ is constant up to sign on $Y \cup (r-Y)$.
\end{lem}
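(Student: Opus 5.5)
The plan is to use the special triangle to produce a vector of norm $1$ in $L$, and then show that every element of $Y \cup (r-Y)$ differs from $\pm$ that vector by a root. Roots lie in $R$, and $V$ is orthogonal to $R$, so the orthogonal projection $p_V$ kills every root. Hence if $w - w'$ or $w + w'$ is a root, then $p_V(w) = p_V(w')$ or $p_V(w) = -p_V(w')$ respectively.

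First I record the basic facts about $L$. It is integral: elements of $Y$ have norm $3$ and mutual inner products in $\{-1,0,1,2\}$ by \Cref{prop-sw}, $N(r)=2$, and $(y,r)=1$ for all $y \in Y$. Moreover $(w,r)=1$ for every $w \in Y \cup (r-Y)$, since $(r-y,r)=2-1=1$. Note also that $r$ itself is a root, so $r \in R$ and $p_V(r)=0$.

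Let $y_1,y_2,y_3 \in Y \cup (r-Y)$ be the special triangle, so $(y_1,y_2)=(y_2,y_3)=2$ and $(y_1,y_3)=0$, and set $e := y_1 - y_2 + y_3 \in L$. A direct computation gives
\[
N(e) = 3+3+3-2\cdot 2-2\cdot 2+2\cdot 0 = 1,
\]
and $(e,r) = 1-1+1 = 1$, so $N(r-e) = 2+1-2 = 1$ as well, and $p_V(r-e) = -p_V(e)$.

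Now fix any $w \in Y \cup (r-Y)$. Since $L$ is integral and $|(w,e)| \le \sqrt{3}\cdot 1$, we get $(w,e) \in \{-1,0,1\}$. There are three cases.
\begin{itemize}
\item If $(w,e)=1$, then $N(w-e)=3+1-2=2$, so $w-e$ is a root and $p_V(w)=p_V(e)$.
\item If $(w,e)=-1$, then $N(w+e)=2$, so $w+e$ is a root and $p_V(w)=-p_V(e)$.
\item If $(w,e)=0$, then $(w,r-e)=1-0=1$, so $N(w-(r-e))=3+1-2=2$. Thus $w-(r-e)$ is a root and $p_V(w)=p_V(r-e)=-p_V(e)$.
\end{itemize}
In every case $p_V(w)=\pm p_V(e)$, which proves that $p_V$ is constant up to sign on $Y \cup (r-Y)$.

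The only real obstacle is finding the right auxiliary vector. Pairwise differences such as $y-y'$ with $(y,y')=2$ give roots, but they do not handle pairs with inner product $0$ or $1$. The norm-$1$ vector $e$ supplied by the special triangle, together with its companion $r-e$, is exactly what covers those remaining cases. The rest is routine norm computation.
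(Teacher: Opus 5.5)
Your proof is correct, and it takes a genuinely cleaner route than the paper's. The paper first uses the roots $y-y'$ (when $(y,y')=2$) and $y+y'-r$ (when $(y,y')=-1$) to get $p_V(y_1)=p_V(y_2)=p_V(y_3)=P$, and to handle any $y$ that has an inner product in $\{-1,2\}$ with some $y_i$. It then runs through the eight patterns $[(y,y_1),(y,y_2),(y,y_3)]\in\{0,1\}^3$. There it exhibits ad hoc roots such as $2y_1-2y_2+y_3-y$ and $y_1-y_2+y_3+y-r$, uses the symmetries $y\leftrightarrow r-y$ and $y_1\leftrightarrow y_3$, and rules out $[1,0,1]$ and $[0,1,0]$ because a $5\times 5$ Gram matrix would fail to be positive semidefinite.

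You instead isolate the vector $e=y_1-y_2+y_3$ that is hidden in the paper's Case 4 root, which is exactly your $w-(r-e)$. You note $N(e)=1$ and $(e,r)=1$, and integrality plus Cauchy--Schwarz then reduce everything to $(w,e)\in\{-1,0,1\}$. The paper's impossible patterns reappear as $(w,e)=2$, which your bound excludes, and $(w,e)=-1$. Your case $(w,e)=-1$ is in fact vacuous: $w+e$ would be a root with $(w+e,r)=2=N(r)$, forcing $w+e=r$ and hence $N(w)=1$. Your argument in that case is still valid, so nothing is lost.

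What your route buys is brevity and transparency. It shows that the role of the special triangle is precisely to produce a norm-$1$ vector $e\in L$ with $(e,r)=1$. It shows that every $w\in Y\cup(r-Y)$ is congruent to $\pm e$ modulo $R$, so $L=R+\Z e$ and $\dim V\le 1$. It also gives $P=p_V(e)$ with $N(P)\le 1$ directly. The paper's route relies only on explicit root constructions relative to the triangle, at the cost of a longer case check with two positive-semidefiniteness computations.
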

\begin{proof}
    By the assumption, $Y \cup (r-Y)$ contains a special triangle $y_1, y_2, y_3 \in Y \cup (r-Y) \subset \sqrt{3}S^{d}$ such that $(y_1,y_2)=(y_2,y_3)=2$ and $(y_1,y_3)=0$. For $y, y' \in Y \cup (r-Y)$ such that $(y,y')=2$, $y-y'$ is a root so $y-y' \in R$, and the orthogonal projection $p_V$ onto $V$ verifies $p_V(y-y')=0$, so $p_V(y)=p_V(y')$. Similarly when $(y,y')=-1$, $y+y'-r$ is a root, so $p_V(y)=-p_V(y')$. Therefore, $p_V(y_1)=p_V(y_2)=p_V(y_3)=:P$.
    
    Fix $y\in Y \cup (r-Y) \setminus \{y_1, y_2, y_3\}$. If one of $(y,y_1)$, $(y,y_2)$ or $(y,y_3)$ belongs to $\{-1, 2\}$, then $p_V(y) = \pm P$. If not, then $(y,y_1)$, $(y,y_2)$, $(y,y_3) \in \{0,1\}$, so we have one of the following cases.

    Case 1, $[(y,y_1), (y,y_2), (y,y_3)]=[1,0,0]$. Then $x:=2y_1-2y_2+y_3-y$ is a root. Thus $0=p_V(x)=2P-2P+P-p_V(y)$, so $p_V(y)=P$.

    Case 2, $[(y,y_1), (y,y_2), (y,y_3)]=[1,0,1]$. This case can not occur since the Gram matrix of $y_1, y_2, y_3, y$ and $r$ is not positive semidefinite.

    Case 3, $[(y,y_1), (y,y_2), (y,y_3)]=[1,1,0]$. Since $\psi(y)$ and $\psi(r-y)$ represent the same line, we can replace $y$ by $r-y$, so this case is the same as Case 1.

    Case 4, $[(y,y_1), (y,y_2), (y,y_3)]=[0,0,0]$. In this case, $x:=y_1-y_2+y_3+y-r$ is a root. $p_V(x)=0$ implies $p_V(y)=-P$.

    Case 5, $[(y,y_1), (y,y_2), (y,y_3)]=[1,1,1]$. Replacing $y$ by $r-y$, we have Case 4.

    Case 6, $[(y,y_1), (y,y_2), (y,y_3)]=[0,1,0]$. This case can not occur since the Gram matrix of $y_1, y_2, y_3, y$ and $r$ is not positive semidefinite.

    Case 7, $[(y,y_1), (y,y_2), (y,y_3)]=[0,0,1]$. We can swap $y_1$ and $y_3$, so this case is the same as Case 1.

    Case 8, $[(y,y_1), (y,y_2), (y,y_3)]=[0,1,1]$. We can swap $y_1$ and $y_3$, so this case is the same as Case 3.

    In any of these cases, we have $p_V(y)=\pm P$. Thus $p_V$ is constant up to sign on $Y\cup (r-Y)$.\\
\end{proof}


\section{Biangular lines in dimension 7}

The largest known biangular line system in dimension 7 is constructed from \Cref{prop-gha}. That is $X_7:=\Bigl\{ [\sqrt{\frac{2}{5}} x, \sqrt{\frac{1}{5}}] \mid x \in S(E_6) \Bigr\}$. This set represents $72$ lines, and $A(X_7)= \{ \pm\frac{1}{5}, \pm\frac{3}{5}\}$.

\begin{lem} \label{lem-avg-d7}
    Let $X \subseteq S^6$ be such that $A(X) \subseteq \{ \pm \frac{1}{5}, \pm \frac{3}{5}\}$ and $|X| \ge 72$. Then $\frac{1}{|X|} \Big|\Bigl\{(x,x') \in X^2 \mid (x,x') \in \{\pm \frac{3}{5}\}\Bigr\}\Big| \ge \frac{141}{7}$.
\end{lem}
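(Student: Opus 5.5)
The plan is to apply \Cref{thm-del} with $n=7$ and $k=2$. Since $Q^7_2$ is an even polynomial, the sum only depends on how many ordered pairs of distinct elements have inner product $\pm\frac{3}{5}$ and how many have $\pm\frac{1}{5}$. Write $N:=|X|$, let $a$ be the number of ordered pairs $(x,x')\in X^2$ with $(x,x')=\pm\frac{3}{5}$, and let $b$ be the number with $(x,x')=\pm\frac{1}{5}$. Because $A(X)\subseteq\{\pm\frac15,\pm\frac35\}$, every ordered pair of distinct vectors is counted exactly once, so $a+b=N(N-1)$.

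First I compute $Q^7_2$ from the recurrence. With $k=1$ it gives $\frac{2}{n+2}Q^n_2(\lambda)=n\lambda^2-1$, so
\[
Q^7_2(\lambda)=\tfrac{9}{2}\,(7\lambda^2-1).
\]
This gives the values
\[
Q^7_2(1)=\tfrac92\cdot 6,\qquad Q^7_2(\pm\tfrac35)=\tfrac92\cdot\tfrac{38}{25},\qquad Q^7_2(\pm\tfrac15)=-\tfrac92\cdot\tfrac{18}{25}.
\]

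\Cref{thm-del} then yields $6N+\frac{38}{25}a-\frac{18}{25}b\ge 0$, that is, $150N+38a-18b\ge 0$. Substituting $b=N(N-1)-a$ gives $56a\ge 18N(N-1)-150N$, hence
\[
\frac{a}{N}\ge \frac{18(N-1)-150}{56}.
\]
The right-hand side is increasing in $N$. At $N=72$ it equals $\frac{1278-150}{56}=\frac{1128}{56}=\frac{141}{7}$, so for every $N\ge 72$ we get $\frac{a}{N}\ge\frac{141}{7}$, which is the claim.

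There is no real obstacle. The only points that need care are getting the normalization of $Q^7_2$ right (an overall positive constant does not affect the inequality) and remembering that the diagonal terms $x=x'$ contribute $N\,Q^7_2(1)$ to the sum.
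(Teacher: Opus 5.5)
Your proposal is correct and follows essentially the same route as the paper: it applies \Cref{thm-del} with $n=7$, $k=2$ and $Q^7_2(\lambda)=\frac{9}{2}(7\lambda^2-1)$, then combines the result with the pair count $a+b=N(N-1)$. The paper does the same computation with counts normalized by $|X|$ from the start ($1+a+b=|X|$), and your remark that $\frac{18(N-1)-150}{56}$ is increasing in $N$ simply makes explicit what the paper's ``combining both inequalities'' leaves implicit.
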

\begin{proof}
    Let $$a:=\frac{1}{|X|} \Big|\Bigl\{(x,x') \in X^2 \mid (x,x') \in \{\pm \frac{3}{5}\}\Bigr\}\Big|,$$ $$b:=\frac{1}{|X|} \Big|\Bigl\{(x,x') \in X^2 \mid (x,x') \in \{\pm \frac{1}{5}\}\Bigr\}\Big|.$$ We have $|X^2|=|X|+a|X|+b|X|$, so $1+a+b=|X|\ge 72$. From \Cref{thm-del} with $n=7$ and $k=2$, since $Q^7_2(x)=\frac{9}{2}(7x^2-1)$  we have 
    \begin{align*}
        0 & \le Q^7_2(1)+aQ^7_2(\frac{3}{5})+bQ^7_2(\frac{1}{5})\\
          & = 27 + \frac{171}{25} a -\frac{81}{25}b.
    \end{align*}
    Combining both inequalities gives $a \ge \frac{141}{7}$.
\end{proof}

\begin{lem}\label{lem-fc-d7}
    Let $X \subseteq S^6$ be such that $A(X) \subseteq \{ \pm \frac{1}{5}, \pm \frac{3}{5}\}$ and $|X| \ge 72$. Then $X \cup (-X)$ contains a special triangle.
\end{lem}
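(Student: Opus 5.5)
Argue by contradiction: assume $X\cup(-X)$ contains no special triangle, and get an upper bound on the number of $\pm\frac35$ pairs that contradicts \Cref{lem-avg-d7}.

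\textbf{Step 1 (neighbourhoods become root sets).} Fix $x_0\in X$. With no special triangle, no two $x,x'\in X_{\frac35}(x_0)$ satisfy $(x,x')=-\frac15$, since otherwise $x,x_0,x'$ would be one. So the hypothesis of \Cref{lem-z} holds at $x_0$. This gives a set of roots $Z$ with $A(Z)\subseteq\{0,1\}$, $|Z|=|X_{\frac35}(x_0)|$, and
\[
\dim\vspan(Z)=\dim\vspan(X_{\frac35}(x_0),x_0)\le 7 .
\]

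\textbf{Step 2 (bound the neighbourhood).} The roots in $Z$ have integer inner products, so they lie in the integral lattice $\langle Z\rangle_\Z$, of rank $n\le 7$. The bounds of \Cref{thm-cam} are nondecreasing in $n$ (values $1,2,4,\dots,16,27$), with the missing ranks $n=4,5$ presumably at most $16$. Hence $|X_{\frac35}(x_0)|\le 27$.

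Now compare with the count in \Cref{lem-avg-d7}. For each $x\ne x_0$ in $X$ with $(x_0,x)=\pm\frac35$, exactly one of $x,-x$ lies in $X_{\frac35}(x_0)$. So the number of ordered pairs $(x_0,x)\in X^2$ with inner product $\pm\frac35$ equals $|X_{\frac35}(x_0)|\le 27$. Averaging over $x_0$ gives $a\le 27$, which does not yet contradict $a\ge\frac{141}{7}\approx 20.1$.

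\textbf{Step 3 (sharpen the bound).} This is the main obstacle: Step 2 allows up to $27$, and we need to get below $\frac{141}{7}$. The extra structure is that the neighbourhood has fixed angles to $x_0$. By the dimension computation in \Cref{lem-z}, $Z$ spans the same dimension as $X_{\frac35}(x_0)\cup\{x_0\}$. Because $p_V$ (projection onto $x_0^\perp$) loses one dimension, the rank of $\langle Z\rangle_\Z$ is in fact at most $6$. Then \Cref{thm-cam} gives $|X_{\frac35}(x_0)|\le 16$ for every $x_0$, so $a\le 16<\frac{141}{7}$, contradicting \Cref{lem-avg-d7}.

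The delicate point is exactly this rank claim: the vectors of $p_V(X_{\frac35}(x_0))$ live in the $6$-dimensional space $x_0^\perp$. The extra coordinate appended in \Cref{lem-z} raises their span by at most one, but I would check that it does not push the rank up to $7$. If it does, I would instead work in the integral lattice generated by $Z$ together with the lifted $x_0$-direction and argue that $Z$ lies in a rank-$6$ sublattice, using that the appended coordinate is constant. If that also fails, the fallback is to combine the bound $27$ with the Delsarte inequality for $k=4$ in \Cref{thm-del}, to exclude the $E_7$-type extremal configuration with $27$ lines.
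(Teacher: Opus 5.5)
\textbf{The gap is in Step 3.} The claim $\rank\langle Z\rangle_\Z\le 6$ is false. The vectors $p_V(x)$ do lie in the $6$-dimensional space $x_0^\perp$. However, $Z$ is obtained by appending the constant coordinate $\sqrt{2/5}$, and appending a constant coordinate can raise the dimension of the span by one. In matrix terms, let $G$ be the Gram matrix of $X_{\frac35}(x_0)$. Then $p_V(X_{\frac35}(x_0))$ has Gram matrix $G-\frac{9}{25}J$, while $Z$ has Gram matrix $\frac52(G-\frac{9}{25}J)+\frac25J$, and adding a positive multiple of $J$ can increase the rank by one.

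Here is a concrete instance. Write $\R^7=\R^2\oplus\R^2\oplus\R^2\oplus\R$. In the $i$-th plane take roots $u_i,v_i$ with $(u_i,v_i)=1$, and take a root $c$ on the last axis. Put $w:=\frac13\sum_i(u_i+v_i)+\frac12c$, so that $N(w)=\frac52$ and $(w,z)=1$ for each of these seven roots $z$. Set $x_0:=-\sqrt{2/5}\,w$ and $x_z:=x_0+\sqrt{2/5}\,z$. These are unit vectors with $(x_0,x_z)=\frac35$ and $(x_z,x_{z'})=\frac15+\frac25(z,z')\in\{\frac15,\frac35\}$. The Gram matrix of the associated $Z$ is exactly that of the seven roots, which is nonsingular, so $\rank\langle Z\rangle_\Z=7$.

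In fact, by \Cref{thm-cam}, rank $7$ is forced whenever $|X_{\frac35}(x_0)|\ge 17$. So the range $21\le|X_{\frac35}(x_0)|\le 27$ you must exclude is entirely a rank-$7$ problem, which no dimension count can dispose of.

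Your fallbacks do not help. The first restates the same false claim. The second cannot work: the even-degree inequalities of \Cref{thm-del} depend only on the distance distribution of $X$, and $X_7$ itself has $a=21\le 27$ and satisfies all of them. Moreover, excluding neighbourhoods of size $27$ would still leave sizes $21,\dots,26$.

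\textbf{What the paper does instead.} It does not push the neighbourhood bound below $\frac{141}{7}$. It takes $x_0$ with $|X_{\frac35}(x_0)|\ge 21$, accepts $|X_{\frac35}(x_0)|\le 27$, and bounds the rest of $X$. It writes each vector of $X_{-\frac15}(x_0)$ in a basis $x_0,x_1,\dots,x_6$ with $x_i\in X_{\frac35}(x_0)$. The Gram matrix of this basis is determined by a graph on $6$ vertices. The coordinate vectors lie in $\{-\frac15\}\times\{-\frac35,-\frac15,\frac15\}^6$, because a product $\frac35$ with some $x_i$ would create a special triangle. A computer clique search then gives $|X_{-\frac15}(x_0)|\le 41$, hence $|X|\le 1+27+41=69$.

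\textbf{How your averaging route could be repaired.} The rank-$7$ case can be killed arithmetically rather than by a rank drop. Put $z_x:=\sqrt{5/2}\,(x-x_0)$, which has the same Gram matrix as $Z$, and $w:=-\sqrt{5/2}\,x_0$. Then $(w,z_x)=1$ for all $x$ and $N(w)=\frac52$. If $R:=\langle z_x\rangle_\Z$ has rank $7$, then $w\in R^*$. But norms in $A_7^*$, $D_7^*$, $E_7^*$ are congruent mod $2$ to elements of $\{0,\frac78,\frac32,\frac{15}8\}$, $\{0,1,\frac74\}$ and $\{0,\frac32\}$ respectively, so none equals $\frac52$. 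Hence $R$ is reducible, and each $z_x$ lies in a single summand of rank at most $6$. Cameron's bounds on the summands then give $|X_{\frac35}(x_0)|\le 16+1=17<\frac{141}{7}$, the worst split being rank $6$ plus rank $1$. Combined with your Steps 1--2, this contradicts \Cref{lem-avg-d7} without a computer search.
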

\begin{proof}
    Assume that $X$ contains no special triangle. From \Cref{lem-avg-d7} we have
    $$\frac{1}{|X|}\sum_{x_0 \in X} \Big|\Bigl\{y \in X \mid (y,x_0) \in \{\pm\frac{3}{5}\}\Bigr\}\Big| \ge \frac{141}{7} \approx 20.14.$$
    Consequently there exists $x_0 \in X$ such that $\Big|\Bigl\{y \in X \mid (y,x_0) \in \{\pm\frac{3}{5}\}\Bigr\}\Big| \ge 21$, so $|X_{\frac{3}{5}}(x_0)| \ge 21$. Since $X$ contains no special triangle, we have $-\frac{1}{5} \notin A(X_\frac{3}{5}(x_0))$. From \Cref{lem-z}, there exists a set of roots $Z$ such that $A(Z) \subseteq \{0,1\}$, $|Z|=|X_{\frac{3}{5}}(x_0)|\ge 21$, and $\dim(\vspan(Z)) = \dim(\vspan(X_{\frac{3}{5}}(x_0),x_0))$. In addition, we can deduce from \Cref{thm-cam} that $\dim(\vspan(Z))=7$ and that $|Z|\le 27$. Given that 
    \begin{align*}
        X \cup (-X) &= \{\pm x_0\} \cup X_\frac{3}{5}(x_0) \cup X_\frac{-3}{5}(x_0) \cup X_\frac{1}{5}(x_0) \cup X_\frac{-1}{5}(x_0)\\
        &= \{\pm x_0\} \cup X_\frac{3}{5}(x_0) \cup (-X_\frac{3}{5}(x_0)) \cup X_\frac{1}{5}(x_0) \cup (-X_\frac{1}{5}(x_0))
    \end{align*}
    where the unions are disjoint. We have
    $$ |X| = 1+|X_\frac{3}{5}(x_0)|+|X_\frac{-1}{5}(x_0)| $$
    so
    \begin{equation} \label{eqd7}
        |X| \le 1+27+|X_\frac{-1}{5}(x_0)|
    \end{equation}

    Furthermore since $\dim(\vspan(X_{\frac{3}{5}}(x_0),x_0))=7$, there exist $x_1, \dots, x_6 \in X_\frac{3}{5}(x_0)$ such that $x_0, x_1, \dots, x_6$ is a basis of $\R^7$. Let $M$ be the Gram matrix of $x_0, x_1, \dots, x_6$. Then for $i \in \{1,\dots,6\}$, $(x_0,x_i)=\frac{3}{5}$, and for $i,j\in\{1,\dots,6\}$ distinct, $(x_i,x_j)\in\{\frac{1}{5}, \frac{3}{5}\}$. Thus $M=M_\Gamma$ where $$
        M_\Gamma := \begin{bmatrix}
            1 & \frac{3}{5} & \dots & \frac{3}{5}\\
            \frac{3}{5}\\
            \dots & & \frac{1}{5}J+\frac{4}{5}I+\frac{2}{5}A_\Gamma &\\
            \frac{3}{5}
        \end{bmatrix},
    $$J is the all one matrix, and $A_\Gamma$ is the adjacency matrix of a graph on 6 vertices. For $x\in X_\frac{-1}{5}(x_0)$ let $a(x):=[(x_0,x), \dots, (x_6,x)]$. The inner product between two elements $x, x'\in X_\frac{-1}{5}(x_0)$ can be written as $(x,x')=a(x)M^{-1}a(x')^T$. Moreover, for any $x,x'\in X_\frac{-1}{5}(x_0)$ distinct and $i \in \{1,\dots,6\}$, we have $(x,x_0)=-\frac{1}{5}$ and $(x,x_i)\in\{ -\frac{3}{5}, -\frac{1}{5}, \frac{1}{5}\}$, thus $a(x) \in \{-\frac{1}{5}\} \times \{ -\frac{3}{5}, -\frac{1}{5}, \frac{1}{5}\}^6$, $a(x)M^{-1}a(x) = 1$ and $a(x)M^{-1}a(x') \in \{ \pm\frac{1}{5}, \pm\frac{3}{5}\}$. We can notice as well that if for some $i,j \in \{2,\dots,7\}$ we have $a(x)_{i} = \frac{-3}{5}$, $a(x)_{j} =\frac{1}{5}$ and $M_{i,j} = \frac{3}{5}$, then $x_{i-1}, x_{j-1}, -x$ is a special triangle. Let
    $$ B_\Gamma := \left\{ a \in \left\{-\frac{1}{5}\right\} \times \left\{ -\frac{3}{5}, -\frac{1}{5}, \frac{1}{5}\right\}^6 \;\middle|\;
        \begin{aligned}
            &aM_\Gamma^{-1}a = 1,\\
            &\forall i,j \in\{2,\dots,7\}, [a_{i}, a_{j}, (M_\Gamma)_{i, j}] \ne [\frac{-3}{5}, \frac{1}{5}, \frac{3}{5}]
        \end{aligned} \right\}.$$
    We can define a graph $G_\Gamma$ with vertices $B_\Gamma$, and edges between $a$ and $b$ when $aM_\Gamma^{-1}b \in \{ \pm\frac{1}{5}, \pm\frac{3}{5}\}$. Then $a(X_\frac{-1}{5}(x_0))$ is a clique in $G_\Gamma$. A computer search through all possible graphs $\Gamma$ on 6 vertices shows that the size of a maximal clique in $G_\Gamma$ is at most 41. Thus $|a(X_\frac{-1}{5}(x_0))| \le 41$, so $|X_\frac{-1}{5}(x_0)| \le 41$. A magma code for this algorithm is given in Appendix \ref{magma1}. Together with the inequality \eqref{eqd7} this implies $|X| \le 69$, which is a contradiction.
\end{proof}

\begin{rem}
    \Cref{lem-fc-d7} can also be proven using semidefinite programming \cite{bachoc}. One can define a semidefinite problem with variables $v(a,b,c) := |\{(x,y,z) \in X^3 \mid (x,y)=a, (x,z)=b, (y,z)=c\}|$, and additional constraints $v(\frac{3}{5},\frac{3}{5},\frac{-1}{5})=v(\frac{3}{5},\frac{-3}{5},\frac{1}{5})=v(\frac{-3}{5},\frac{-3}{5},\frac{-1}{5})=0$.
\end{rem}

\begin{lem} \label{lem-minroots-d7}
    Let $Y \subset \sqrt{3}S^7$ be such that $A(Y) \subseteq \{-1,0,1,2\}$, $|Y| \ge 72$, and let $r \in \sqrt{2}S^7$ be such that for all $y \in Y$ we have $( y,r ) =1$. Let $L:=\langle Y,r \rangle_\Z$. Then $|S(L) \cap r^\perp| \ge 42$.
\end{lem}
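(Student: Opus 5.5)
Via the converse direction of \Cref{prop-sw}, I will pass from $Y$ back to a line system $X$ and then combine \Cref{lem-min-roots} with \Cref{lem-avg-d7}. Since $r\in\sqrt{2}S^7$ and every $y\in Y$ satisfies $(y,r)=1$, write $y=\tfrac12 r+w_y$ with $w_y\perp r$. Then $N(w_y)=3-\tfrac12=\tfrac52$. Put $x_y:=\sqrt{\tfrac25}\,w_y$, a unit vector in $r^\perp\cong\R^7$. For distinct $y,y'$ we have $(y,y')=\tfrac12+(w_y,w_{y'})$, so $(x_y,x_{y'})=\tfrac25\bigl((y,y')-\tfrac12\bigr)\in\{-\tfrac35,-\tfrac15,\tfrac15,\tfrac35\}$ as $(y,y')$ ranges over $\{-1,0,1,2\}$. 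In particular $y\mapsto x_y$ is injective, since two equal $x_y$ would force $(y,y')=3$. Thus $X:=\{x_y\}\subseteq S^6$ has $|X|=|Y|\ge 72$ and $A(X)\subseteq\{\pm\tfrac15,\pm\tfrac35\}$. After choosing coordinates with $r=[0_{\R^7},\sqrt2]$, we recover $Y=\{[\sqrt{\tfrac52}x,\sqrt{\tfrac12}]\mid x\in X\}$, which is exactly the setting of \Cref{lem-min-roots}.

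Next apply \Cref{lem-min-roots} to $L=\langle Y,r\rangle_\Z$:
\[
|S(L)\cap r^\perp|\ \ge\ \frac{2}{|X|}\Bigl|\Bigl\{(x,x')\in X^2 \;\Big|\; (x,x')=\pm\tfrac35\Bigr\}\Bigr|.
\]
By \Cref{lem-avg-d7}, the right-hand side is at least $2\cdot\tfrac{141}{7}=\tfrac{282}{7}\approx 40.29$. This only gives $|S(L)\cap r^\perp|\ge 41$, one short of the claim. This gap is the main obstacle.

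To close it, I would use parity. The set $S(L)\cap r^\perp$ is closed under negation and contains no zero vector, so its size is even; hence $|S(L)\cap r^\perp|\ge 41$ already upgrades to $\ge 42$. If one wants more robustness, the bound of \Cref{lem-avg-d7} can also be sharpened using integrality. Each $x_0$ has an integer number of $\pm\tfrac35$-neighbours, and the Delsarte inequality with $|X|\ge 72$ forces the average to exceed $20$. This is, however, unnecessary given the parity argument. So the proof reduces to the conversion step, the two cited lemmas, and the evenness of the root count.
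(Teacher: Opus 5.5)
Your proof is correct and follows the paper's argument: combine \Cref{lem-min-roots} with \Cref{lem-avg-d7} to get $|S(L)\cap r^\perp|\ge 282/7$, hence at least $41$, then use that $S(L)\cap r^\perp$ is closed under negation, so its size is even and at least $42$. Your explicit recovery of $X$ from $Y$ via $y=\tfrac12 r+w_y$ spells out the converse of \Cref{prop-sw}, which the paper uses only implicitly. The aside about sharpening \Cref{lem-avg-d7} through integrality is unneeded, as you note.
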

\begin{proof}
    This is a direct consequence of \Cref{lem-min-roots}, \Cref{lem-avg-d7}, and the fact that $|S(L) \cap r^\perp|$ is even.
\end{proof}

\begin{lem} \label{lem-rank-d7}
    Let $Y \subset \sqrt{3}S^7$ be such that $A(Y) \subseteq \{-1,0,1,2\}$, $|Y| \ge 72$, and $ r \in \sqrt{2}S^7$ such that for all $ y \in Y, ( y,r ) =1$. Let $L:=\langle Y,r \rangle_\Z$, and $R:=\langle S(L) \rangle_\Z$. Then $\rank(R) = \rank(L) = 8$.
\end{lem}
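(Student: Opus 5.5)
The plan is to first dispose of $\rank(L)$ and then obtain $\rank(R)$ from \Cref{lem-pvc}. For $\rank(L)=8$: let $X$ be the set of unit vectors corresponding to $Y$, i.e. $Y=\{[\sqrt{\frac52}x,\sqrt{\frac12}]\}$. If $X$ lay in a proper subspace of $\R^7$, it would be a biangular system in dimension at most $6$. Then $|X|\le 40<72$ by \Cref{thm1-6}, a contradiction. Hence $X$ spans $\R^7$, so $Y\cup\{r\}$ spans $\R^8$ and $\rank(L)=8$. Moreover $L$ is integral, because the Gram matrix of $Y\cup\{r\}$ is integral: the norms are $3$ and $2$, $(y,r)=1$, and $A(Y)\subseteq\{-1,0,1,2\}$.

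For $\rank(R)$, \Cref{lem-fc-d7} gives a special triangle in $X\cup(-X)$, hence one in $Y\cup(r-Y)$. \Cref{lem-pvc} then shows that $p_V$ is constant up to sign on $Y\cup(r-Y)$, say with values $\pm P$, where $V=\vspan(L)\cap R^\perp$. Since $r$ is a root, $r\in R$ and $p_V(r)=0$. Since $L$ is generated by $Y$ and $r$, we get $p_V(L)\subseteq \Z P$. On the other hand $p_V(L)$ spans $V$, because $L$ spans $\R^8$. Thus $\dim V\le 1$, i.e. $\rank(R)\ge 7$, and $\rank(R)=8$ exactly when $P=0$.

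It remains to exclude $\rank(R)=7$, i.e. $P\neq0$; this is the main obstacle. In that case every element of $Y\cup(r-Y)$ decomposes as $\pm P+w$ with $w\in\vspan(R)$. Replacing $y$ by $r-y$ where needed, we may assume each projection equals $+P$. Then $(w,r)=1$, $N(w)=3-N(P)$, and $(w,w')=(y,y')-N(P)$. Since $L$ is integral, every $w$ lies in $p_{\vspan R}(L)\subseteq R^*$. By \Cref{lem-minroots-d7}, $R$ is a rank $7$ root lattice with at least $42$ roots in $r^\perp$, and by construction it contains $r$. I would enumerate the rank-$7$ root lattices: $E_7$, $A_7$, $D_7$, and the decomposable ones such as $D_6\oplus A_1$, $A_6\oplus A_1$, $E_6\oplus A_1$, and so on. For each candidate I would count the roots orthogonal to a root $r$ and discard those with fewer than $42$. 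For the survivors, the condition that $N(P)\equiv -(w,w')\pmod 1$ for all pairs, together with the discriminant group of $R$ (using \Cref{lem-E7dual} and \Cref{lem-Dndual}), restricts $N(P)$ to finitely many values. For each value I would list the vectors $w\in R^*$ with $N(w)=3-N(P)$ and $(w,r)=1$. I would then show that a maximum clique in the graph on these $w$, with edges when $(w,w')+N(P)\in\{-1,0,1,2\}$ and identifying $w$ with the partner coming from $r-y$, has fewer than $72$ elements. If there are too many cases to handle by hand, this last step would be done by computer, as in \Cref{lem-fc-d7}. The contradiction forces $P=0$, hence $\rank(R)=\rank(L)=8$.
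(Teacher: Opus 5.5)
Your treatment of $\rank(L)$ matches the paper. Your argument that $\dim V\le 1$ is correct: $r\in S(L)\subseteq R$ gives $p_V(L)\subseteq\Z P$, and $\vspan(L)=\R^8$ then forces $V\subseteq\R P$. It is also an improvement, since it makes the paper's separate clique search for $\rank(R)\le 6$ unnecessary.

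The gap is in the remaining case $\rank(R)=7$. There, the claim that every maximum clique has fewer than $72$ vertices is false. No search using only the constraints you list can establish it, because a $72$-element configuration satisfies all of them. Take $R=E_6\oplus A_1$, $r=[0_{E_6},\sqrt2,0]$, $P=[0_{E_6},0,\sqrt{\tfrac12}]$, and $w=[x,\sqrt{\tfrac12},0]$ for $x\in S(E_6)$. Then:
\begin{itemize}
\item $w\in R^*$, $N(w)=\tfrac52=3-N(P)$ and $(w,r)=1$;
\item $|S(R)\cap r^\perp|=72\ge 42$;
\item $(w,w')+N(P)=(x,x')+1\in\{-1,0,1,2\}$ for $x\ne x'$.
\end{itemize}
This $72$-clique is just $\phi(X_7)$ (up to swapping the last two coordinates), seen through the sublattice $E_6\oplus A_1$ of its actual root lattice $E_6\oplus A_1^2$. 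The paper's search finds exactly this clique as the unique one of size $72$.

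The missing ingredient is that $R$ contains \emph{all} roots of $L$. In your normalization every vector has $V$-component $+P$. Two such vectors with $(y,y')=-1$ would give the root $y+y'-r$, whose $V$-component is $2P\neq 0$, so it is a root of $L$ outside $R$. In the example, $x$ and $-x$ produce $[0_{E_6},0,\sqrt2]\in S(L)\setminus R$; this is how the paper kills the case.

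You have two ways to close the gap:
\begin{itemize}
\item Remove $-1$ from the edge set, which is legitimate and consistent with the proof of \Cref{lem-pvc}. Your graph then becomes a subgraph of the paper's $G'(R,r,N(P))$. The only $72$-clique of $G'(R,r,N(P))$ contains such a pair, so the maximum drops below $72$.
\item Treat the equality case separately, as the paper does.
\end{itemize}

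You should also drop the identification of $w$ with its partner $r-w$. After normalization, each line already has exactly one representative. Moreover, $w+P$ and $(r-w)+P$ have inner product $2N(P)-2$, which equals $-1$ precisely when $N(P)=\tfrac12$. So the two can come from different lines (here, from $x$ and $-x$), and identifying them would invalidate the clique bound.
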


\begin{proof}
    Let $\psi(y) := \sqrt{\frac{2}{5}}(y-\frac{1}{2}r)$ be defined for $y \in Y \cup (r-Y)$, and $X:=\{ \psi(y) \mid y \in Y\}$. If $\rank(L)=d$, then by \Cref{prop-sw}, $X$ represents a biangular line system in $\mathbb{R}^{d-1}$. The largest biangular line system in dimension 6 has 40 lines (\cite{ganzhinov}), so $\rank(L)=8$.
    
    Assume that $\dim(V) \ge 1$ where $V:=\vspan(L) \cap R^\perp$. From \Cref{lem-fc-d7}, $X \cup (-X)$ contains a special triangle, so $Y$ also does. Thus from \Cref{lem-pvc}, $p_V$ is constant up to sign on $Y \cup (r-Y)$. Furthermore, from \Cref{lem-minroots-d7} we have $|S(R)\cap r^\perp| = |S(L)\cap r^\perp| \ge 42$.
    
    First consider the case $\rank(R) \le 6$. The orthogonal projection $p_R$ onto $R$ verifies $p_R(Y \cup (r-Y)) \subseteq B(R,r,N(P))$ where $$B(R,r,N(P)) := \{x \in R^* \mid N(x)=3-N(P), (x,r)=1\}.$$ This set contains vectors of norm $3-N(P)$, and $$A(p_R(Y \cup (r-Y))) \subseteq \{ -1+N(P), \pm N(P), 1\pm N(P), 2-N(P)\}.$$ Furthermore, $B(R,r,N(P))$ can be split into equivalence classes $$\overline{B}(R,r,N(P)) := \Bigl\{\{x, r-x\} \mid x \in B(R,r,N(P))\Bigr\}.$$ Then for any $y \in Y$, $p_R(y)$ is contained in a unique class. In addition, given that $y \in Y \Rightarrow r-y \notin Y$, for a class $\{x, r-x\}$ only one of $x$ and $r-x$ can be in $p_R(Y)$. Then $\overline{p_R(Y)}:=\Bigl\{ \{x,r-x \} \mid x \in p_R(Y) \Bigr\} \subseteq \overline{B}(R,r,N(P))$, and $|\overline{p_R(Y)}|=|p_R(Y)|$. Additionally, for $x,x' \in B(R,r,N(P))$, we have $x=r-x' \Leftrightarrow (x,x')=N(P)-2 $. Let $G(R,r,N(P))$ be the graph with vertex set $\overline{B}(R,r,N(P))$, and edges $\{x,r-x\} \sim \{x', r-x'\}$ when $(x,x') \in \{ -1+N(P), \pm N(P), 1\pm N(P), 2-N(P)\}$. Then $\overline{p_R(Y)}$ is a clique in $G(R,r,N(P))$.
    
    A computer search through all graphs $G(R,r,N(P))$ for all root lattices $R$ of rank at most $6$, all possible $r \in S(R)$ such that $|S(R)\cap r^\perp| \ge 42$, and all values $N(P) \in [0,3]$ shows that the size of a largest clique is always less than $36$. Thus $|\overline{p_R(Y)}| < 36$, so $|p_R(Y)| < 36$. Note that the possible norms smaller than $3$ for vectors in $R^*$ are finite, so the values to consider for $N(P)$ are finite. That is to say, $B(R,r,N(P))$ is non-empty only for a finite number of values of $N(P)$. Lastly, for $z \in p_R(Y)$, $z$ has at most two preimages by $p_R$ in $Y$, which are $z+P$ and $z-P$. Thus $|Y|\le 2 |p_R(Y)|<72$, which contradicts the assumptions of the lemma.

    A magma code for this algorithm is given in Appendix \ref{magma2}.

    Now we consider the case $\rank(R)=7$. Then $Y \cup (r-Y) \subseteq B'(R,r,N(P))$ where $$B'(R,r,N(P)):=\{[x, \pm\sqrt{N(P)}] \mid x\in R^*, N(x)=3-N(P), (x,r)=1\}.$$ In this case as well, $B'(R,r,N(P))$ can be split into classes $$\overline{B'}(R,r,N(P)) := \Bigl\{\{x, r-x\} \mid x \in B'(R,r,N(P))\Bigr\},$$ and $\overline{Y}:=\Bigl\{\{y,r-y\} \mid y \in Y \Bigr\} \subseteq \overline{B'}(R,r,N(P))$. Let $G'(R,r,N(P))$ be the graph with vertex set $\overline{B'}(R,r,N(P))$ and edges $\{y, r-y\} \sim \{y',r-y'\}$ when $(y,y') \in \{-1, 0, 1, 2\}$. Then $\overline{Y}$ is a clique in $G'(R,r,N(P))$.
    
    A computer search through all graphs $G'(R,r,N(P))$ for all root lattices $R$ of rank $7$, all possible $r \in S(R)$ such that $|S(R)\cap r^\perp| \ge 42$, and and all values $N(P) \in [0,3]$ shows that the size of a largest clique is always at most $72$. Thus $|Y|=|\overline{Y}| \le 72$, and a unique clique of size 72 exists for $R=E_6 \oplus A_1 \oplus 0$, $r\in\{0_{E_6}\} \times S(A_1) \oplus 0$, and $N(P)=\frac{1}{2}$. We have $A_1 \simeq \sqrt{2}\Z$, thus $A_1^* \simeq \sqrt{\frac{1}{2}}\Z$, and $r=[0_{E_6},\sqrt{2},0]$. Then
    \begin{align*}
        Y &\subseteq \left\{x \in (E_6 \oplus A_1)^* \times \{\pm\sqrt{\frac{1}{2}}\} \;\middle|\; N(x) = 3, (x,r)=1 \right\} \\
        &=  S(E_6^*) \times \{\sqrt{\frac{1}{2}}\} \times \{\pm\sqrt{\frac{1}{2}}\}.\\
    \end{align*}
    Given that $E_6$ is the root lattice of rank $6$ with the most roots, we have $S(E_6^*)=S(E_6)$. Thus
    $$
        Y \subseteq S(E_6) \times \{\sqrt{\frac{1}{2}}\} \times \{\pm\sqrt{\frac{1}{2}}\}.
    $$
    Here we have one of the following two cases. If there exist $y=[x,\sqrt{\frac{1}{2}},\pm\sqrt{\frac{1}{2}}] \in Y$ and $y'=[-x,\sqrt{\frac{1}{2}},\pm\sqrt{\frac{1}{2}}] \in Y$, then since $-2 \notin A(Y)$, the sign on the last coordinate of $y$ and $y'$ is the same. This implies that $y+y'-r = [0_{E_6}, 0, \sqrt{2}] \in R$, which contradicts $R=E_6 \oplus A_1 \oplus 0$. If no such $y, y'$ exist, then since $|S(E_6)|=72\le|Y|$, there must be $z=[x,\sqrt{\frac{1}{2}},\sqrt{\frac{1}{2}}] \in Y$ and $z'=[x,\sqrt{\frac{1}{2}},-\sqrt{\frac{1}{2}}] \in Y$. Consequently, $z-z'=[0_{E_6}, 0, \sqrt{2}] \in R$, and we get the same contradiction.
    
    A magma code for this algorithm is given in Appendix \ref{magma3}.
\end{proof}

\begin{thm} \label{mainthm7}
    Let $X \subset S^6$ be such that $A(X) \subseteq \{\pm\frac{1}{5}, \pm\frac{3}{5}\}$. Then $|X| \le 72$, and $|X| = 72$ only if $X\simeq X_7$.
\end{thm}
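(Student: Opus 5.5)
We argue by contradiction: take $X \subset S^6$ with $A(X) \subseteq \{\pm\frac15,\pm\frac35\}$ and $|X| \ge 72$, and show that $|X| = 72$ and $X \simeq X_7$. We pass to the lattice picture of \Cref{prop-sw}. Let $Y:=\{[\sqrt{\frac52}x,\sqrt{\frac12}] \mid x\in X\} \subset \sqrt3 S^7$ and $r=[0,\sqrt2]$. Then $A(Y)\subseteq\{-1,0,1,2\}$ and $(y,r)=1$ for all $y\in Y$. Put $L=\langle Y,r\rangle_\Z$ and $R=\langle S(L)\rangle_\Z$. By \Cref{lem-rank-d7}, $\rank(R)=\rank(L)=8$. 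Hence $R$ is a full-rank root sublattice of $L$, so $L \subseteq R^*$ and $Y\cup(r-Y)\subseteq\{y\in R^* \mid N(y)=3,\ (y,r)=1\}$. By \Cref{lem-minroots-d7}, $|S(R)\cap r^\perp|\ge 42$. This reduces everything to a finite check over rank $8$ root lattices.

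The main step mirrors the proof of \Cref{lem-rank-d7}. For each root lattice $R$ of rank $8$ ($E_8$, $D_8$, $A_8$, $E_7\oplus A_1$, $E_6\oplus A_2$, $D_4^2$, $A_2^4$, $A_1^8$, and so on), for each orbit of roots $r\in S(R)$ under the automorphism group with $|S(R)\cap r^\perp|\ge 42$, we form the set $B(R,r)=\{x\in R^* \mid N(x)=3,\ (x,r)=1\}$. We group its elements into classes $\{x,r-x\}$ and build the graph on classes with an edge when $(x,x')\in\{-1,0,1,2\}$. Then $\overline Y$ is a clique. A computer search, as in Appendix-style magma code, should show that the maximum clique size is at most $72$. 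It should also show that cliques of size $72$ occur only for $R=E_6\oplus A_2$, or $E_7 \oplus A_1$ with a suitable $r$, and are unique up to $\mathrm{Aut}(R,r)$ and the switching $y\mapsto r-y$.

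For the identification, the expected extremal configuration comes from lifting $S(E_6)$. Under the inverse of \Cref{prop-sw}, $X_7$ gives $Y_7=\{[\sqrt{\frac52}\cdot\sqrt{\frac25}x,\ \sqrt{\frac52}\sqrt{\frac15},\ \sqrt{\frac12}]\}=\{[x,\sqrt{\frac12},\sqrt{\frac12}] \mid x\in S(E_6)\}$. It lies in $E_6\oplus A_2^*$-type coordinates, with $r$ a root orthogonal to $E_6$. We verify directly that the generated $L$ has root system containing $E_6\oplus A_2$. We then check that the unique maximum clique found by the search is this $Y_7$. Equivalence of cliques under $\mathrm{Aut}(R)$ fixing $r$, together with the switchings $y\leftrightarrow r-y$, translates back through \Cref{prop-sw} to orthogonal transformations and sign changes of $X$. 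This gives $X\simeq X_7$.

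The main obstacle is the computational step. The sets $B(R,r)$ can be large, since $R^*$ has many norm-$3$ vectors for small components like $A_1^8$. Clique search with target size $72$ may therefore be expensive. I would first prune lattices using the root-count bound $|S(R)\cap r^\perp|\ge 42$, which excludes most decomposable lattices. I would also use \Cref{lem-fc-d7} to fix a special triangle $y_1,y_2,y_3$ in $Y\cup(r-Y)$, up to automorphism, and restrict the search to vertices compatible with it. Finally, I would exploit the symmetry of $\mathrm{Aut}(R,r)$ to fix one vertex.
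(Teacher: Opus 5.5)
Your approach is the paper's. You use \Cref{lem-rank-d7} to get $\rank(R)=\rank(L)=8$, place $Y$ in $\{x\in R^*\mid N(x)=3,\ (x,r)=1\}$, prune by $|S(R)\cap r^\perp|\ge 42$, and run a clique search on the classes $\{x,r-x\}$ over rank-$8$ root lattices. Your special-triangle pruning is an optional speed-up the paper does not use. The genuine problem is your identification step: it names the wrong extremal lattice, and the ``direct verification'' you propose is false.

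The lattice generated by $\phi(X_7)=\{[a,\sqrt{\tfrac12},\sqrt{\tfrac12}]\mid a\in S(E_6)\}$ and $r=[0_{E_6},0,\sqrt2]$ is $E_6\oplus\langle u,r\rangle_\Z$, where $u=[0_{E_6},\sqrt{\tfrac12},\sqrt{\tfrac12}]$; differences of $E_6$ roots already give all of $E_6$. Since $N(u)=1$, $(u,r)=1$ and $N(r)=2$, the second summand is $\Z^2$, whose roots are $\pm r$ and $\pm(2u-r)=\pm[0_{E_6},\sqrt2,0]$. So $R=E_6\oplus A_1^2$, with $r$ a root of one $A_1$ factor, and nothing contains $E_6\oplus A_2$.

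The two lattices you predict contribute nothing to the search:
\begin{itemize}
\item $E_6\oplus A_2$ with $r$ in $A_2$: you would need $N(a)+N(b)=3$ with $N(a)\in 2\Z\cup(\tfrac43+2\Z)$ and $N(b)\in 2\Z\cup(\tfrac23+2\Z)$, which is impossible.
\item $E_6\oplus A_2$ with $r$ in $E_6$: this leaves only $30+6=36<42$ orthogonal roots, so it is pruned.
\item $E_7\oplus A_1$: every choice of $r$ forces a vector of $E_7^*$ of norm $3$, $\tfrac52$ or $1$, but $E_7^*$ only has norms in $2\Z\cup(\tfrac32+2\Z)$.
\end{itemize}
Looking for $Y_7$ there would therefore fail.

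The correct outcome, which the paper's search reports, is a single $72$-clique for $R=E_6\oplus A_1^2$. You can check this by hand. Here $B(R,r)=S(E_6)\times\{\pm\sqrt{\tfrac12}\}\times\{\sqrt{\tfrac12}\}$, which has exactly $72$ classes $\{x,r-x\}$, all pairwise adjacent. Choosing in each class the representative with middle coordinate $+\sqrt{\tfrac12}$ gives exactly $\phi(X_7)$. With this correction, $X\simeq X_7$ follows as in the paper: $\phi(X_7)$ is such a clique, and it is the only one over all admissible $(R,r)$.
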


\begin{proof}
    Let $X \subset S^6$ be such that $A(X) \subseteq \{\pm\frac{1}{5}, \pm\frac{3}{5}\}$, and assume that $|X| \ge 72$. Define $Y=\{ \phi(x) \mid x \in X\}$ where $\phi : x \mapsto [\sqrt{\frac{5}{2}} x, \sqrt{\frac{1}{2}}]$, and $r=[0_{\mathbb{R}^7}, \sqrt{2}]$. Then by \Cref{prop-sw}, $A(Y) \subseteq \{-1, 0, 1, 2\}$ and for $y \in Y$, $(r,y)=1$. Let $L:=\langle Y,r \rangle_\Z$, and $R:=\langle S(L)\rangle_\Z$. From \Cref{lem-rank-d7}, $\rank(L)=\rank(R)=8$. Then $Y \subseteq B(R,r)$ where $B(R,r):= \{x\in R^* \mid N(x)=3, (x,r)=1\}$. We can split $B(R,r)$ into classes $\overline{B}(R,r) := \Bigl\{ \{x, r-x \} \mid x \in B(R,r) \Bigr\}$. Then $\overline{Y}:=\Bigl\{\{y,r-y\} \mid y \in Y \Bigr\} \subseteq\overline{B}(R,r)$. Define the graph $G(R,r)$ with vertex set $\overline{B}(R,r)$ and edges $\{y,r-y\} \sim \{y',r-y'\}$ when $(y,y') \in \{-1, 0, 1, 2\}$. Then $\overline{Y}$ is a clique in $G(R,r)$. Furthermore, from \Cref{lem-minroots-d7} we have $|S(R)\cap r^\perp| \ge 42$.
    
    A computer search through all graphs $G(R,r)$ with all possible root lattices $R$ of rank 8, and all possible $r\in S(R)$ shows that the largest clique has size at most $72$. Thus $|Y|=|\overline{Y}| \le 72$. Furthermore there is only one clique of size $72$, for $R=E_6 \oplus A_1^2$ and $r \in \{0_{E_6}\} \times A_1^2$. A magma code for this algorithm is given in Appendix \ref{magma4}.

    Given that $|X_7|=72$ and $A(X_7)= \{ \pm\frac{1}{5}, \pm\frac{3}{5}\}$, $|X|=72$ only when $X\simeq X_7$.
\end{proof}


\section{Biangular lines in dimension 8}
In this section, we follow the same process as for dimension 7, to find the largest biangular line systems with inner products $\pm\frac{1}{5}$ and $\pm\frac{3}{5}$ in dimension 8. Since the proofs are similar to the case of dimension 7, we will not repeat some details. The largest known biangular system in dimension 8 is constructed using \Cref{prop-gha}, and is $X_8:=\Bigl\{ [\sqrt{\frac{2}{5}} x, \sqrt{\frac{1}{5}}] \mid x \in S(E_7) \Bigr\}$. We have $|X_8|=126$ and $A(X_8)=\{\pm\frac{1}{5}, \pm\frac{3}{5}\}$.

\begin{lem} \label{lem-avg-d8}
    Let $X \subseteq S^7$ be such that $A(X) \subseteq \{ \pm \frac{1}{5}, \pm \frac{3}{5}\}$ and $|X| \ge 126$. Then $\frac{1}{|X|} \Big|\Bigl\{(x,x') \in X^2 \mid (x,x') \in \{\pm \frac{3}{5}\}\Bigr\}\Big| \ge \frac{975}{32}$.
\end{lem}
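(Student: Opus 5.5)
The plan is to follow the proof of \Cref{lem-avg-d7} verbatim, replacing $n=7$ by $n=8$ in the Delsarte bound. Set
$$a:=\frac{1}{|X|} \Big|\Bigl\{(x,x') \in X^2 \mid (x,x') \in \{\pm \tfrac{3}{5}\}\Bigr\}\Big|, \qquad b:=\frac{1}{|X|} \Big|\Bigl\{(x,x') \in X^2 \mid (x,x') \in \{\pm \tfrac{1}{5}\}\Bigr\}\Big|.$$
Every ordered pair in $X^2$ is either diagonal or has inner product in $\{\pm\frac{1}{5},\pm\frac{3}{5}\}$. Counting pairs therefore gives $|X|^2=|X|+a|X|+b|X|$, so $1+a+b=|X|\ge 126$, that is, $b\ge 125-a$.

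Next I compute $Q^8_2$ from the recurrence. With $k=1$ it gives $\frac{2}{n+2}Q^n_2(\lambda)=\lambda\cdot n\lambda-1$, hence $Q^n_2(\lambda)=\frac{n+2}{2}(n\lambda^2-1)$. For $n=8$ this is $Q^8_2(\lambda)=5(8\lambda^2-1)$, with values
$$Q^8_2(1)=35,\qquad Q^8_2(\pm\tfrac{3}{5})=\tfrac{47}{5},\qquad Q^8_2(\pm\tfrac{1}{5})=-\tfrac{17}{5}.$$
Since $Q^8_2$ is even, the signs of the inner products do not matter. Applying \Cref{thm-del} with $n=8$ and $k=2$ and dividing by $|X|$ gives
$$0\le 35+\tfrac{47}{5}a-\tfrac{17}{5}b.$$

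Finally, the coefficient of $b$ is negative, so I may substitute $b\ge 125-a$. This yields $0\le 175+47a-17(125-a)=64a-1950$, hence $a\ge \frac{1950}{64}=\frac{975}{32}$, as claimed.

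There is no real obstacle here. The only step needing care is getting $Q^8_2$ and its three values right, together with the sign check that justifies substituting the lower bound for $b$.
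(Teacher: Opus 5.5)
Your proposal is correct and is the paper's argument: you define $a,b$, use $1+a+b=|X|\ge 126$, apply the Delsarte inequality with $Q^8_2(\lambda)=5(8\lambda^2-1)$ to get $0\le 35+\frac{47}{5}a-\frac{17}{5}b$, and eliminate $b$. Your remark that the negative coefficient of $b$ justifies substituting $b\ge 125-a$ simply makes explicit the paper's ``combining both inequalities'' step.
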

\begin{proof}
    Let $$a:=\frac{1}{|X|} \Big|\Bigl\{(x,x') \in X^2 \mid (x,x') \in \{\pm \frac{3}{5}\}\Bigr\}\Big|,$$ $$b:=\frac{1}{|X|} \Big|\Bigl\{(x,x') \in X^2 \mid (x,x') \in \{\pm \frac{1}{5}\}\Bigr\}\Big|.$$ We have $|X^2|=|X|+a|X|+b|X|$, so $1+a+b=|X|\ge 126$. From \Cref{thm-del} we have 
    \begin{align*}
        0 & \le Q^8_2(1)+aQ^8_2(\frac{3}{5})+bQ^8_2(\frac{1}{5})\\
          & = 35 + \frac{47}{5} a -\frac{17}{5}b.
    \end{align*}
    Combining both inequalities gives $a \ge \frac{975}{32}$.
\end{proof}

\begin{lem} \label{lem-fc-d8}
    Let $X \subseteq S^7$ be such that $A(X) \subseteq \{ \pm \frac{1}{5}, \pm \frac{3}{5}\}$ and $|X| \ge 126$. Then $X \cup (-X)$ contains a special triangle.
\end{lem}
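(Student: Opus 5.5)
I would follow the proof of \Cref{lem-fc-d7} step for step, replacing the dimension-7 numbers by those of dimension 8. Assume for contradiction that $X \cup (-X)$ contains no special triangle.

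\textbf{Step 1: a vertex with many $\pm\frac{3}{5}$ neighbours.} By \Cref{lem-avg-d8}, the average over $x_0 \in X$ of the number of $x \in X$ with $(x,x_0)=\pm\frac{3}{5}$ is at least $\frac{975}{32} \approx 30.47$. Hence some $x_0$ has at least $31$ such neighbours, so $|X_{\frac{3}{5}}(x_0)| \ge 31$. Since there is no special triangle, $-\frac{1}{5} \notin A(X_{\frac{3}{5}}(x_0))$.

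\textbf{Step 2: bound $X_{\frac{3}{5}}(x_0)$ and force full rank.} \Cref{lem-z} gives a set of roots $Z$ with $A(Z)\subseteq\{0,1\}$, $|Z| \ge 31$, and $\dim\vspan(Z)=\dim\vspan(X_{\frac{3}{5}}(x_0),x_0) \le 8$. By \Cref{thm-cam}, rank $7$ allows at most $27$ such roots, and lower ranks allow fewer. So the rank is exactly $8$ and $|X_{\frac{3}{5}}(x_0)| \le 36$.

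\textbf{Step 3: reduce to a bound on $X_{-\frac{1}{5}}(x_0)$.} The decomposition $|X| = 1+|X_{\frac{3}{5}}(x_0)|+|X_{-\frac{1}{5}}(x_0)|$ gives $|X| \le 37 + |X_{-\frac{1}{5}}(x_0)|$. It therefore suffices to show $|X_{-\frac{1}{5}}(x_0)| \le 88$.

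\textbf{Step 4: encode $X_{-\frac{1}{5}}(x_0)$ as a clique.} Choose $x_1,\dots,x_7 \in X_{\frac{3}{5}}(x_0)$ so that $x_0,\dots,x_7$ is a basis of $\R^8$. Their Gram matrix is $M_\Gamma$ as in dimension 7, now built from a graph $\Gamma$ on $7$ vertices. Each $x\in X_{-\frac{1}{5}}(x_0)$ gives a vector
\[
a(x)\in\{-\tfrac{1}{5}\}\times\{-\tfrac{3}{5},-\tfrac{1}{5},\tfrac{1}{5}\}^7 .
\]
The entry $+\frac{3}{5}$ cannot occur, since $x$, $x_0$, $x_i$ would then contain a special triangle. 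These vectors satisfy $a M_\Gamma^{-1} a^T=1$. We also exclude the patterns $[a_i,a_j,(M_\Gamma)_{ij}]=[-\frac{3}{5},\frac{1}{5},\frac{3}{5}]$, which would give a special triangle $x_{i-1},x_{j-1},-x$. This defines the set $B_\Gamma$. The images $a(X_{-\frac{1}{5}}(x_0))$ then form a clique in the graph $G_\Gamma$, where $a\sim b$ iff $aM_\Gamma^{-1}b^T\in\{\pm\frac{1}{5},\pm\frac{3}{5}\}$.

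\textbf{Main obstacle.} The crux is the computer search over all graphs $\Gamma$ on $7$ vertices, up to isomorphism, with $M_\Gamma$ nonsingular. It must show that the maximum clique in $G_\Gamma$ is at most $88$. I expect the true bound to be much smaller, comparable to the $41$ found in dimension 7.

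Computationally, $|B_\Gamma|\le 3^7$ before filtering, so the graphs are small and a clique search (e.g.\ in Magma) is feasible. Two pruning steps help further:
\begin{itemize}
\item keep only those $\Gamma$ for which $M_\Gamma$ is positive definite;
\item keep only those $\Gamma$ that extend to a configuration with $|X_{\frac{3}{5}}(x_0)|\ge 31$.
\end{itemize}
If the raw clique bound turned out too weak, I would strengthen the argument by also using $|X_{\frac{3}{5}}(x_0)|\le 36$ jointly with the clique size. Alternatively, as in the remark after \Cref{lem-fc-d7}, I would fall back on the three-point semidefinite programming bound with the special-triangle variables set to zero.

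Once the bound holds, $|X|\le 37+88=125<126$, contradicting the hypothesis $|X|\ge126$.
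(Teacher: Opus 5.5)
Your proposal is correct and follows the paper's proof essentially step for step. The steps match: the averaging via \Cref{lem-avg-d8} to find $x_0$ with $|X_{\frac{3}{5}}(x_0)|\ge 31$, then \Cref{lem-z} and \Cref{thm-cam} to force rank $8$ and $|X_{\frac{3}{5}}(x_0)|\le 36$, then the clique search over Gram matrices $M_\Gamma$ built from graphs on $7$ vertices, excluding special-triangle patterns. The paper's search (the code of Appendix \ref{magma1} with the dimension set to $8$) gives $|X_{-\frac{1}{5}}(x_0)|\le 78$, within your required threshold of $88$, so $|X|\le 115$ and no fallback is needed.
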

\begin{proof}
    Assume that $X$ contains no special triangle. From \Cref{lem-avg-d8} we have
    $$\frac{1}{|X|}\sum_{x_0 \in X} \Big|\Bigl\{x \in X \mid (x,x_0) \in \{\pm\frac{3}{5}\}\Bigr\}\Big| \ge \frac{975}{32} \approx 30.47.$$
    \sloppy Consequently there exists $x_0 \in X$ such that $\Big|\Bigl\{y \in X \mid (y,x_0) \in \{\pm\frac{3}{5}\}\Bigr\}\Big| \ge 31$, so $|X_{\frac{3}{5}}(x_0)| \ge 31$. From \Cref{lem-z}, we have a set of roots $Z$ with $A(Z) \subseteq \{ 0,1\}$, $\dim(\vspan(X_{\frac{3}{5}}(x_0),x_0))=\dim(\vspan(Z))$ and $|X_\frac{3}{5}(x_0)| = |Z|$. From \Cref{thm-cam} we can then deduce that $\dim(\vspan(X_{\frac{3}{5}}(x_0),x_0))=8$ and $|X_\frac{3}{5}(x_0)| \le 36$. We have
    $$ |X| = 1+|X_\frac{3}{5}(x_0)|+|X_\frac{-1}{5}(x_0)| $$
    so
    \begin{equation} \label{eqd8}
        |X| \le 1+36+|X_\frac{-1}{5}(x_0)|
    \end{equation}

    Furthermore since $\dim(\vspan(X_{\frac{3}{5}}(x_0),x_0))=8$, there exists $x_1, \dots, x_7 \in X_\frac{3}{5}(x_0)$ such that $x_0, x_1, \dots, x_7$ is a basis of $\R^8$. Similarly to the proof of \Cref{lem-fc-d7}, the vectors of $X_\frac{-1}{5}(x_0)$ can be written in this basis. Let $M$ be the Gram matrix of $x_0, x_1, \dots, x_7$. Then, a computer search with all possible $M$ gives $|X_\frac{-1}{5}(x_0)| \le 78$. The magma code given in Appendix \ref{magma1} can be used for this. Together with the inequality \ref{eqd8} this implies $|X| \le 115$, which is a contradiction.
\end{proof}

\begin{lem} \label{lem-minroots-d8}
    Let $Y \subset \sqrt{3}S^8$ be such that $A(Y) \subseteq \{-1,0,1,2\}$, $|Y| \ge 126$, and let $r \in \sqrt{2}S^8$ be such that for all $y \in Y$ we have $( y,r ) =1$. Let $L:=\langle Y,r \rangle_\Z$. Then $|S(L) \cap r^\perp| \ge 62$.
\end{lem}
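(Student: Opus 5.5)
The plan mirrors \Cref{lem-minroots-d7}: we combine \Cref{lem-min-roots} with \Cref{lem-avg-d8} and then use a parity argument.

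First, we pass back to lines. Define $X:=\{\psi(y) \mid y \in Y\}$ with $\psi(y)=\sqrt{\frac{2}{5}}(y-\frac{1}{2}r)$. By \Cref{prop-sw}, $X$ is a set of unit vectors with $A(X)\subseteq\{\pm\frac15,\pm\frac35\}$ and $|X|=|Y|\ge 126$. It lives in $r^\perp\cap\vspan(Y,r)$, which has dimension at most $8$, so we may regard $X\subseteq S^7$. Moreover $Y$ is recovered from $X$ by $y=\sqrt{\frac52}x+\frac12 r$, which is exactly the setup of \Cref{lem-min-roots} (with $r$ playing the role of $[0,\sqrt2]$ after an orthogonal change of coordinates).

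Second, we apply \Cref{lem-min-roots}:
$$|S(L)\cap r^\perp| \ge \frac{2}{|X|}\left|\left\{(x,x')\in X^2 \;\middle|\; (x,x')=\pm\tfrac35\right\}\right|.$$
By \Cref{lem-avg-d8}, the right-hand side is at least $2\cdot\frac{975}{32}=\frac{975}{16}\approx 60.94$. Hence $|S(L)\cap r^\perp|\ge 61$.

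Finally, $S(L)\cap r^\perp$ is closed under $u\mapsto -u$ and contains no zero vector, so its cardinality is even. Therefore $|S(L)\cap r^\perp|\ge 62$.

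The only point needing care is the identification in the first step. If $\vspan(Y,r)$ has dimension less than $9$, then $X$ sits in a lower-dimensional sphere, and we must check that \Cref{lem-avg-d8} still applies. It does, since a configuration in $S^{k}$ with $k<7$ embeds isometrically in $S^7$, and the Delsarte inequality for $n=8$ holds for any subset of $S^7$. Beyond that, the argument is purely arithmetic.
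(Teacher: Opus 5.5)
Your proposal is correct and is the paper's proof: \Cref{lem-min-roots} with the bound $\frac{975}{32}$ from \Cref{lem-avg-d8} gives $|S(L)\cap r^\perp| \ge \frac{975}{16} > 60$, and since $S(L)\cap r^\perp$ is closed under $u\mapsto -u$ (so has even size), this rounds up to $62$. Your check that the given $Y$ and $r$ fit the setup of \Cref{lem-min-roots} via $\psi$ is a detail the paper leaves implicit, and you handle it correctly; since $X\subseteq r^\perp$ already forces $X\subseteq S^7$, the lower-dimensional case needs no separate treatment.
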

\begin{proof}
This is a direct consequence of \Cref{lem-min-roots}, \Cref{lem-avg-d8}, and the fact that $|S(L) \cap r^\perp|$ is even.
\end{proof}

\begin{lem} \label{lem-rank-d8}
    Let $Y \subset \sqrt{3}S^8$ be such that $A(Y) \subseteq \{-1,0,1,2\}$, $|Y| \ge 126$, and $ r \in \sqrt{2}S^8$ such that for all $ y \in Y, ( y,r ) =1$. Let $L:=\langle Y,r \rangle_\Z$, and $R:=\langle S(L) \rangle_\Z$. Then $\rank(R) = \rank(L) = 9$.
\end{lem}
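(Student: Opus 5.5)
The argument mirrors the proof of \Cref{lem-rank-d7}, with the dimension-7 inputs replaced by their dimension-8 analogues. Set $\psi(y):=\sqrt{\frac{2}{5}}(y-\frac{1}{2}r)$ and $X:=\psi(Y)$. By \Cref{prop-sw}, $X$ represents a biangular line system with inner products in $\{\pm\frac15,\pm\frac35\}$ in $\R^{\rank(L)-1}$. If $\rank(L)\le 8$, then $X$ would be such a system in dimension at most $7$ with at least $126$ lines. This contradicts \Cref{mainthm7}, which caps such systems at $72$ lines. Hence $\rank(L)=9$.

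Next, set $V:=\vspan(L)\cap R^\perp$ and suppose $\dim V\ge 1$, aiming for a contradiction. By \Cref{lem-fc-d8}, $X\cup(-X)$ contains a special triangle, so $Y\cup(r-Y)$ does as well. \Cref{lem-pvc} then shows that $p_V$ is constant up to sign on $Y\cup(r-Y)$, say equal to $\pm P$. By \Cref{lem-minroots-d8}, $|S(R)\cap r^\perp|=|S(L)\cap r^\perp|\ge 62$. This inequality sharply restricts the root lattices $R$ of rank at most $8$ and the roots $r\in S(R)$ that need to be considered.

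The remaining argument splits into two cases according to $\rank(R)$.

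\textbf{Case $\rank(R)\le 7$.} Project onto $R$. As in dimension 7, $p_R(Y)$ lies in $B(R,r,N(P))=\{x\in R^*\mid N(x)=3-N(P),\ (x,r)=1\}$. The pairs $\{x,r-x\}$ coming from $p_R(Y)$ form a clique in the graph $G(R,r,N(P))$, where adjacency means the inner product lies in $\{-1+N(P),\pm N(P),1\pm N(P),2-N(P)\}$. Only finitely many values of $N(P)$ give a nonempty $B$, because $R^*$ has finitely many norms below $3$. Each $z\in p_R(Y)$ has at most the two preimages $z\pm P$. So it suffices to show, by the computer search of Appendix \ref{magma2}, that every such clique has size less than $63$; this would give $|Y|<126$.

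\textbf{Case $\rank(R)=8$.} Here $V$ is one-dimensional, and $Y\subseteq B'(R,r,N(P))=\{[x,\pm\sqrt{N(P)}]\mid x\in R^*,\ N(x)=3-N(P),\ (x,r)=1\}$. I would run the clique search of Appendix \ref{magma3} on $G'(R,r,N(P))$. I expect the only cliques of size at least $126$ to occur for $R=E_7\oplus A_1$ with $r\in\{0_{E_7}\}\times S(A_1)$ and $N(P)=\frac12$. In that configuration, $Y\subseteq S(E_7^*)\times\{\sqrt{\tfrac12}\}\times\{\pm\sqrt{\tfrac12}\}$, and \Cref{lem-E7dual} shows that $E_7^*$ has no norm-2 vectors outside $S(E_7)$, which has exactly $126$ elements. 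The argument then finishes exactly as in dimension 7. If two vectors of $Y$ have $E_7$-parts $x$ and $-x$, then $y+y'-r=[0,0,\sqrt2]$. Otherwise, counting forces some $x$ to appear with both signs in the last coordinate, and then $z-z'=[0,0,\sqrt2]$. Either way this is a root of $L$ outside $R$, a contradiction. Therefore $V=0$ and $\rank(R)=\rank(L)=9$.

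The main obstacle is the computational step. The graphs $G'$ are considerably larger in rank $8$, since they involve $E_8$-type and $D_8$-type duals and $126$-cliques. So the enumeration must first use the bound $|S(R)\cap r^\perp|\ge 62$ to prune the candidate $(R,r)$, and should exploit the automorphism group of $R$ fixing $r$ to keep the clique search feasible.
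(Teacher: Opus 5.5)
Your outline follows the paper's proof step for step: the rank of $L$ via \Cref{mainthm7}, the special triangle and \Cref{lem-pvc}, the projection count for $\rank(R)\le 7$, then the rank-$8$ clique search with a pigeonhole contradiction. The last case, however, has a genuine gap, because the outcome you predict for the rank-$8$ search is not what the search returns.

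Besides the clique for $R=E_7\oplus A_1\oplus 0$, the paper's computation finds four more cliques of size $126$. These occur for $R=D_6\oplus A_1^2\oplus 0$ with $r=[0_{D_6\oplus A_1},\sqrt2,0]$. In both cases the $R$-part has norm $\frac52$, i.e.\ $N(P)=\frac12$; the paper records this value as ``$N(P)=\frac52$''. Since $R$ is the actual root sublattice of $L$, this is a separate case. Your closing argument rests on $S(E_7^*)=S(E_7)$ having exactly $126$ elements, so it does not apply here. As written, your proof does not exclude $R=D_6\oplus A_1^2\oplus 0$.

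The gap closes with the same idea. The paper observes that $D_6\oplus A_1\subset E_7$, hence $(E_7\oplus A_1\oplus 0)^*\subset(D_6\oplus A_1^2\oplus 0)^*$. It then lists candidates via $D_6^*=\Z^6\cup(\frac12+\Z)^6$ (\Cref{lem-Dndual}) and looks for two vectors of $Y$ with the same $R$-part and opposite last coordinate.

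Alternatively, you can make your pigeonhole uniform. When $N(P)=\frac12$, two distinct classes $\{y,r-y\}$ cannot lie over the same class $\{x,r-x\}$ of $R$-parts. Otherwise, since $r\in R$ has zero $V$-component, they contain $[x,\sqrt{\frac12}]$ and $[x,-\sqrt{\frac12}]$. Their difference $[0,\sqrt2]$ is a root of $L$ not in $R$. Hence $|Y|\le\frac12|B(R,r,\frac52)|$.
\begin{itemize}
\item For $E_7\oplus A_1$ this bound is $63$.
\item For $D_6\oplus A_1^2$, write $B(R,r,\frac52)=\{[u,t,\sqrt{\frac12}] \mid u\in D_6^*,\ t\in\sqrt{\frac12}\Z,\ N(u)+t^2=2\}$. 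There are three types: $u\in S(D_6)$ with $t=0$ (60 vectors), $u\in\{\pm\frac12\}^6$ with $t=\pm\sqrt{\frac12}$ (128 vectors), and $u=0$ with $t=\pm\sqrt2$ (2 vectors). That is $190$ vectors, so $|Y|\le 95<126$.
\end{itemize}
With this case added, your argument is complete, modulo the computer searches, exactly as in the paper.
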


\begin{proof}
    Let $\psi(y) := \sqrt{\frac{2}{5}}(y-\frac{1}{2}r)$ be defined for $y \in Y \cup (r-Y)$, and $X:=\{ \psi(y) \mid y \in Y\}$. If $\rank(L)=d$, then by \Cref{prop-sw} $X$ represents a biangular line system in $\mathbb{R}^{d-1}$ with $A(X) \subseteq \{ \pm \frac{1}{5}, \pm \frac{3}{5}\}$. Thus by \Cref{mainthm7}, $\rank(L)=9$.
    
    Assume that $\dim(V) \ge 1$ where $V:=\vspan(L) \cap R^\perp$. From \Cref{lem-fc-d8}, $X \cup (-X)$ contains a special triangle. Thus from \Cref{lem-pvc}, $p_V$ is constant up to sign on $Y \cup (r-Y)$. Furthermore, from \Cref{lem-minroots-d8} we have $|S(R)\cap r^\perp| = |S(L)\cap r^\perp| \ge 62$.
    
    First consider the case $\rank(R) \le 7$. The orthogonal projection $p_R$ onto $R$ verifies $p_R(Y \cup (r-Y)) \subseteq B(R,r,N(P))$ where $B(R,r,N(P)) := \{x \in R^* \mid N(x)=3-N(P), (x,r)=1\}$ contains vectors of norm $3-N(P)$, and $A(p_R(Y \cup (r-Y))) \subseteq \{ -1+N(P), \pm N(P), 1\pm N(P), 2-N(P)\}$. Furthermore, $B(R,r,N(P))$ can be split into equivalence classes $\overline{B}(R,r,N(P)) := \Bigl\{\{x, r-x\} \mid x \in B(R,r,N(P))\Bigr\}$. Then for any $y \in Y$, $p_R(y)$ is contained in a unique class. In addition, given that $y \in Y \Rightarrow r-y \notin Y$, for a class $\{x, r-x\}$ only one of $x$ and $r-x$ can be in $p_R(Y)$. Thus $\overline{p_R(Y)}:=\Bigl\{ \{x,r-x \} \mid x \in p_R(Y) \Bigr\} \subseteq \overline{B}(R,r,N(P))$, and $|\overline{p_R(Y)}|=|p_R(Y)|$. Additionally, for $x,x' \in B(R,r,N(P))$, we have $x=r-x' \Leftrightarrow (x,x')=N(P)-2 $. Let $G(R,r,N(P))$ be the graph with vertex set $\overline{B}(R,r,N(P))$, and edges $\{x,r-x\} \sim \{x',r-x'\}$ when $(x,x') \in \{ -1+N(P), \pm N(P), 1\pm N(P), 2-N(P)\}$. Then $\overline{p_R(Y)}$ is a clique in $G(R,r,N(P))$.
    
    A computer search through all root lattices $R$ of rank at most $7$, all possible $r \in S(R)$ such that $|S(R)\cap r^\perp| \ge 62$, and all values $N(P) \in [0,3]$ shows that $|p_R(Y)| < 63$. Lastly, for $z \in p_R(Y)$, $z$ has at most two preimages by $p_R$ in $Y$, which are $z+P$ and $z-P$. Thus $|Y|\le 2 |p_R(Y)|<126$, which contradicts the assumptions of the lemma. The algorithm in Appendix \ref{magma2} can be used for this computer searches.

    Now we consider the case $\rank(R)=8$. Then $Y \cup (r-Y) \subseteq B'(R,r,N(P))$ where $B'(R,r,N(P)):=\{[x, \pm\sqrt{N(P)}] \mid x\in R^*, N(x)=3-N(P), (x,r)=1\}$. In this case as well, $B'(R,r,N(P))$ can be split into classes $\overline{B'}(R,r,N(P)) := \Bigl\{\{x, r-x\} \mid x \in B'(R,r,N(P))\Bigr\}$. Then $\overline{Y}:=\Bigl\{\{y,r-y\} \mid y \in Y \Bigr\} \subseteq \overline{B'}(R,r,N(P))$, and $|\overline{Y}|=|Y|$. Let $G'(R,r,N(P))$ be the graph with vertex set $\overline{B'}(R,r,N(P))$ and edges $\{y,r-y\} \sim \{y', r-y'\}$ when $(y,y') \in \{-1, 0, 1, 2\}$. Then $\overline{Y}$ is a clique in $G'(R,r,N(P))$.
    
    A computer search through all root lattices $R$ of rank $8$, all possible $r \in S(R)$ such that $|S(R)\cap r^\perp| \ge 62$, and and all values $N(P) \in [0,3]$ shows that $|Y| \le 126$. Furthermore, there are exactly five cliques of size $126$. One clique is for $R=E_7 \oplus A_1 \oplus 0$, $r=[0_{E_7}, \sqrt{2},0]$ and $N(P)=\frac{5}{2}$. The other cliques are for $R=D_6 \oplus A_1^2 \oplus 0$, $r=[0_{D_6+A_1}, \sqrt{2},0]$ and $N(P)=\frac{5}{2}$. The algorithm in Appendix \ref{magma3} can be used for this computer searches.
    
    We have
    \begin{align*}
        E_7 &\simeq E_8 \cap (e_8+e_7)^\perp\\
         &= \left\{\frac{\alpha}{2}\1+x \;\middle|\; \alpha \in \{0,1\},\; x \in \Z^8,\; \sum_{i=1}^{8} x_i \in 2\Z,\; x_7+\frac{\alpha}{2} = -x_8-\frac{\alpha}{2} \right\}
    \end{align*}
    and
    \begin{align*}
        D_6 \oplus A_1 = \left\{ x\in\Z^8 \;\middle|\; \sum_{i=1}^{6} x_i \in 2\Z,\; x_7=-x_8\right\}.
    \end{align*}
    This implies $D_6 \oplus A_1 \subset E_7$, so $D_6 \oplus A_1^2 \oplus 0 \subset E_7 \oplus A_1 \oplus 0$ and $(E_7 \oplus A_1 \oplus 0)^* \subset(D_6 \oplus A_1^2 \oplus 0)^*$. Therefore
    \begin{align*}
        Y &\subseteq \left\{ x\in(D_6 \oplus A_1^2)^* \times \{\pm\sqrt{\frac{1}{2}}\}\;\middle|\; N(x)=3, (x,r)=1 \right\}\\
        &= S(D_6^* + \sqrt{\frac{1}{2}}\Z) \times\{\sqrt{\frac{1}{2}}\} \times \{\pm\sqrt{\frac{1}{2}}\}
    \end{align*}
    From \Cref{lem-Dndual}, we have $D_6^*=\Z^6 \cup \left(\frac{1}{2} + \Z\right)^6$, so $S(D_6^*)=S(D_6)$ and
    \begin{align*}
        Y \subseteq & \left\{\left[x,0,\sqrt{\frac{1}{2}}, \pm\sqrt{\frac{1}{2}}\right] \;\middle|\; x \in S(D_6) \right\}\\
            & \cup \left\{\left[x, \pm\sqrt{\frac{1}{2}}, \sqrt{\frac{1}{2}}, \pm\sqrt{\frac{1}{2}}\right] \;\middle|\; x \in D_6^*,\; N(x)=\frac{3}{2} \right\}\\
            & \cup \left\{\left[0_{D_6}, \pm2\sqrt{\frac{1}{2}}, \sqrt{\frac{1}{2}}, \pm\sqrt{\frac{1}{2}}\right]\right\}
    \end{align*} 

    The set on the right hand side has size $128$, so there exists $x \in S(D_6)$ such that $y:=\left[x,0,\sqrt{\frac{1}{2}}, \sqrt{\frac{1}{2}}\right] \in Y$ and $y':=\left[x,0,\sqrt{\frac{1}{2}}, -\sqrt{\frac{1}{2}}\right] \in Y$. Then $y-y'=[0_{D_6 \oplus A_1^2}, \sqrt{2}] \in R$, which contradicts $R=D_6 \oplus A_1^2 \oplus 0$.
\end{proof}

\begin{thm} \label{mainthm8}
    Let $X \subset S^7$ be such that $A(X) \subseteq \{\pm\frac{1}{5}, \pm\frac{3}{5}\}$. Then $|X| \le 126$, and $|X| = 126$ only if $X\simeq X_8$.
\end{thm}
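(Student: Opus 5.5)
The plan mirrors the proof of \Cref{mainthm7}. Let $X \subset S^7$ with $A(X) \subseteq \{\pm\frac{1}{5}, \pm\frac{3}{5}\}$ and $|X| \ge 126$. Set $Y=\{\phi(x) \mid x\in X\}$ with $\phi(x)=[\sqrt{\frac{5}{2}}x,\sqrt{\frac{1}{2}}]$ and $r=[0_{\R^8},\sqrt{2}]$. By \Cref{prop-sw}, $A(Y)\subseteq\{-1,0,1,2\}$ and $(y,r)=1$ for all $y\in Y$.

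Next I would set $L:=\langle Y,r\rangle_\Z$ and $R:=\langle S(L)\rangle_\Z$, and invoke \Cref{lem-rank-d8} to get $\rank(R)=\rank(L)=9$. Then $R$ has finite index in $L$, so $L \subseteq R^*$ and $Y \subseteq B(R,r):=\{x\in R^* \mid N(x)=3,\ (x,r)=1\}$. \Cref{lem-minroots-d8} gives $|S(R)\cap r^\perp|\ge 62$, which prunes the candidate pairs $(R,r)$.

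As before, I would split $B(R,r)$ into classes $\{x,r-x\}$. The set $\overline{Y}$ is a clique of size $|Y|$ in the graph $G(R,r)$ on these classes, with adjacency when the inner product lies in $\{-1,0,1,2\}$; this is well defined because $(x,r-x')=1-(x,x')$ preserves that set. The core step is a computer search over every root lattice $R$ of rank $9$ (orthogonal sums of $A_n,D_n,E_6,E_7,E_8$ of total rank $9$), every orbit of roots $r\in S(R)$ under the automorphism group with $|S(R)\cap r^\perp|\ge 62$, and maximal cliques in $G(R,r)$, using the algorithm of Appendix \ref{magma4}.

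The expected output is that the clique number is at most $126$, with equality only for $R=E_7\oplus A_1^2$ and $r\in\{0_{E_7}\}\times S(A_1^2)$. In that case $Y=S(E_7)\times\{\sqrt{\frac{1}{2}}\}\times\{\pm\sqrt{\frac{1}{2}}\}$ up to the choice of sign, using $S(E_7^*)=S(E_7)$ since $E_7^*$ has minimal nonzero norm $\frac{3}{2}$ by \Cref{lem-E7dual}. Undoing $\phi$ then recovers $X_8$ up to equivalence. The other possible maximal cliques, for example those built from $D_6\oplus A_1^3$ or from $E_7$ with $r$ chosen differently, would have to be excluded either by the search itself or by a short argument as in \Cref{lem-rank-d8}: a coincidence such as $y-y'$ or $y+y'-r$ producing a root outside $R$ contradicts $R=\langle S(L)\rangle_\Z$.

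The main obstacle is the computation in rank $9$. The lattices $R^*$ can have many norm-$3$ vectors, especially for $E_8\oplus A_1$, $D_9$, and lattices with many $A_1$ summands, and clique search near size $126$ is expensive. To keep it feasible I would exploit the stabilizer of $r$ in $\mathrm{Aut}(R)$ to fix the first few vertices of the clique. I would also use the special triangle from \Cref{lem-fc-d8} to fix a starting configuration $y_1,y_2,y_3$, which restricts candidates to vectors with prescribed inner products with that configuration. Finally, I would confirm that any surviving clique of size $126$ is equivalent to the lifting of $S(E_7)$ by comparing Gram matrices.
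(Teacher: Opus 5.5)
Your proposal is correct and follows the paper's proof essentially step for step. You lift via \Cref{prop-sw}, use \Cref{lem-rank-d8} and \Cref{lem-minroots-d8} to reduce to rank-$9$ root lattices $R$ with $|S(R)\cap r^\perp|\ge 62$, run the clique search of Appendix \ref{magma4}, and identify the extremal clique with $\phi(X_8)$. One minor difference: the search also returns two size-$126$ cliques for $R=D_6\oplus A_1^3$. The paper shows these are equivalent to $\phi(X_8)$ (via $D_6\oplus A_1\subset E_7$) rather than excluding them, but your extra-root argument also disposes of them. For example, $[\tfrac12,\dots,\tfrac12,\sqrt{\tfrac12},\sqrt{\tfrac12},\sqrt{\tfrac12}]-[1,1,0,0,0,0,0,\sqrt{\tfrac12},\sqrt{\tfrac12}]$ is a root of $L$ that does not lie in $D_6\oplus A_1^3$.
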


\begin{proof}
    Let $X \subset S^7$ be such that $A(X) \subseteq \{\pm\frac{1}{5}, \pm\frac{3}{5}\}$, and assume that $|X| \ge 126$. Define $Y=\{ \phi(x) \mid x \in X\}$ where $\phi : x \mapsto [\sqrt{\frac{5}{2}} x, \sqrt{\frac{1}{2}}]$, and $r=[0_{\mathbb{R}^8}, \sqrt{2}]$. Then by \Cref{prop-sw}, $A(Y) \subseteq \{-1, 0, 1, 2\}$ and for $y \in Y$, $(r,y)=1$. Let $L:=\langle Y,r \rangle_\Z$, and $R:=\langle S(L)\rangle_\Z$. From \Cref{lem-rank-d8}, $\rank(L)=\rank(R)=9$. Then $Y \subseteq B(R,r)$ where $B(R,r):= \{x\in R^* \mid N(x)=3, (x,r)=1\}$. We can split $B(R,r)$ into classes $\overline{B}(R,r) := \Bigl\{ \{x, r-x \} \mid x \in B(R,r) \Bigr\}$. Then $\overline{Y}:=\Bigl\{\{y,r-y\} \mid y \in Y \Bigr\} \subseteq\overline{B}(R,r)$. Define the graph $G(R,r)$ with vertex set $\overline{B}(R,r)$ and edges $\{y,r-y\} \sim \{y',r-y'\}$ when $(y,y') \in \{-1, 0, 1, 2\}$. Then $\overline{Y}$ is a clique in $G(R,r)$. Furthermore, from \Cref{lem-minroots-d8} we have $|S(R)\cap r^\perp| \ge 62$. A computer search shows that $|Y| \le 126$, and there are three cliques of size $126$. One clique is for $R=E_7 \oplus A_1^2$ and $r=[0_{E_7 \oplus A_1}, \sqrt{2}]$. The other two cliques are for $R=D_6 \oplus A_1^3$ and $r=[0_{D_6 \oplus A_1^2}, \sqrt{2}]$.  The algorithm in Appendix \ref{magma4} can be used for these computer searches.

    From the proof of \Cref{lem-rank-d8}, $D_6 \oplus A_1 \subset E_7$ and $D_6^*= \Z^6 \cup (\frac{1}{2} + \Z)^6$. We deduce $(E_7 \oplus A_1^2)^* \subset (D_6 \oplus A_1^3)^*$. Thus 
    \begin{align*}
        Y &\subseteq \left\{ x \in D_6^* + \sqrt{\frac{1}{2}}\Z + \sqrt{\frac{1}{2}}\Z \times \{ \sqrt{\frac{1}{2}}\} \;\middle|\; N(x) = 3\right\}.
    \end{align*}
    Let us denote for a permutation $\sigma \in \Sym(n)$ and a vector $x\in\R^n$, $\sigma(x_1, \dots, x_n)$ the vector obtained by permuting the coordinates of $x$ according to $\sigma$, that is to say $[x_{\sigma(1)}, \dots, x_{\sigma(n)}]$. Then
    \begin{align*}
        Y \subseteq & \left\{\left[\sigma(\pm\frac{3}{2},\pm\frac{1}{2},0,0,0,0),0,0,\sqrt{\frac{1}{2}}\right] \;\middle|\; \sigma \in \Sym(6) \right\}\\
            & \cup \left\{\left[\sigma(\pm1,\pm1,0,0,0,0), 0, \pm\sqrt{\frac{1}{2}}, \sqrt{\frac{1}{2}}\right] \;\middle|\; \sigma \in \Sym(6) \right\}\\
            & \cup \left\{\left[\sigma(\pm1,\pm1,0,0,0,0), \pm\sqrt{\frac{1}{2}}, 0, \sqrt{\frac{1}{2}}\right] \;\middle|\; \sigma \in \Sym(6) \right\}\\
            & \cup \left\{\left[\pm\frac{1}{2}, \dots,\pm\frac{1}{2}, \pm\sqrt{\frac{1}{2}}, \pm\sqrt{\frac{1}{2}}, \sqrt{\frac{1}{2}}\right] \right\}\\
            & \cup \left\{\left[0_{D_6}, \pm2\sqrt{\frac{1}{2}}, \pm\sqrt{\frac{1}{2}}, \sqrt{\frac{1}{2}}\right]\right\}\\
            & \cup \left\{\left[0_{D_6}, \pm\sqrt{\frac{1}{2}}, \pm2\sqrt{\frac{1}{2}}, \sqrt{\frac{1}{2}}\right]\right\}.
    \end{align*} 
    Thus, by defining $y^*:=\{y,r-y\}$, we can pair vectors by inner product $-2$, and
    \begin{align*}
        \overline{Y} \subseteq & \left\{\left[\sigma(\frac{3}{2},\pm\frac{1}{2},0,0,0,0),0,0,\sqrt{\frac{1}{2}}\right]^* \;\middle|\; \sigma \in \Sym(6) \right\}\\
            & \cup \left\{\left[\sigma(\pm1,\pm1,0,0,0,0), 0, \sqrt{\frac{1}{2}}, \sqrt{\frac{1}{2}}\right]^* \;\middle|\; \sigma \in \Sym(6) \right\}\\
            & \cup \left\{\left[\sigma(\pm1,\pm1,0,0,0,0), \sqrt{\frac{1}{2}}, 0, \sqrt{\frac{1}{2}}\right]^* \;\middle|\; \sigma \in \Sym(6) \right\}\\
            & \cup \left\{\left[\pm\frac{1}{2}, \dots,\pm\frac{1}{2}, \pm\sqrt{\frac{1}{2}}, \sqrt{\frac{1}{2}}, \sqrt{\frac{1}{2}}\right]^* \right\}\\
            & \cup \left\{\left[0_{D_6}, \pm2\sqrt{\frac{1}{2}}, \sqrt{\frac{1}{2}}, \sqrt{\frac{1}{2}}\right]^*\right\}\\
            & \cup \left\{\left[0_{D_6}, \pm\sqrt{\frac{1}{2}}, 2\sqrt{\frac{1}{2}}, \sqrt{\frac{1}{2}}\right]^*\right\}.
    \end{align*}
    
    The two cliques of size 126 correspond to
    \begin{align*}
        \overline{Y_1} :=&  \left\{\left[\sigma(\pm1,\pm1,0,0,0,0), 0, \sqrt{\frac{1}{2}}, \sqrt{\frac{1}{2}}\right]^* \;\middle|\; \sigma \in \Sym(6) \right\} \\
        &\cup \left\{\left[\pm\frac{1}{2}, \dots,\pm\frac{1}{2}, \pm\sqrt{\frac{1}{2}}, \sqrt{\frac{1}{2}}, \sqrt{\frac{1}{2}}\right]^* \;\middle|\; \begin{aligned}
            &\text{even number of + in the}\\
            &\text{first 6 coordinates}
        \end{aligned}\right\}\\
        & \cup \left\{\left[0_{D_6}, \pm2\sqrt{\frac{1}{2}}, \sqrt{\frac{1}{2}}, \sqrt{\frac{1}{2}}\right]^* \right\},
    \end{align*}
    and
    \begin{align*}
        \overline{Y_2} :=&  \left\{\left[\sigma(\pm1,\pm1,0,0,0,0), 0, \sqrt{\frac{1}{2}}, \sqrt{\frac{1}{2}}\right]^* \;\middle|\; \sigma \in \Sym(6) \right\} \\
        &\cup \left\{\left[\pm\frac{1}{2}, \dots,\pm\frac{1}{2}, \pm\sqrt{\frac{1}{2}}, \sqrt{\frac{1}{2}}, \sqrt{\frac{1}{2}}\right]^* \;\middle|\; \begin{aligned}
            &\text{odd number of + in the}\\
            &\text{first 6 coordinates}
        \end{aligned}\right\}\\
        & \cup \left\{\left[0_{D_6}, \pm2\sqrt{\frac{1}{2}}, \sqrt{\frac{1}{2}}, \sqrt{\frac{1}{2}}\right]^*\right\}.
    \end{align*}

$\overline{Y_2}$ can be obtained from $\overline{Y_1}$ by negating the first coordinate, thus $Y_1 \simeq Y_2$.

Note that, with $E_7 \simeq E_8 \cap (e_8-e_7)^\perp$, we have
\begin{align*}
    Y \simeq & Y_1\\
        \simeq & \left\{\left[\sigma(\pm1,\pm1,0,0,0,0), 0, \sqrt{\frac{1}{2}}, \sqrt{\frac{1}{2}}\right] \;\middle|\; \sigma \in \Sym(6) \right\} \\
        &\cup \left\{\left[\pm\frac{1}{2}, \dots,\pm\frac{1}{2}, \pm\sqrt{\frac{1}{2}}, \sqrt{\frac{1}{2}}, \sqrt{\frac{1}{2}}\right] \;\middle|\; \begin{aligned}
            &\text{even number of + in the}\\
            &\text{first 6 coordinates}
        \end{aligned}\right\}\\
        & \cup \left\{\left[0_{D_6}, \pm2\sqrt{\frac{1}{2}}, \sqrt{\frac{1}{2}}, \sqrt{\frac{1}{2}}\right] \right\}\\
        \simeq &\left\{\left[\sigma(\pm1,\pm1,0,0,0,0), 0, 0, \sqrt{\frac{1}{2}}, \sqrt{\frac{1}{2}}\right] \;\middle|\; \sigma \in \Sym(6) \right\} \\
        &\cup \left\{\left[\pm\frac{1}{2}, \dots,\pm\frac{1}{2},\pm\left(\frac{1}{2},\frac{1}{2}\right), \sqrt{\frac{1}{2}}, \sqrt{\frac{1}{2}}\right] \;\middle|\; \begin{aligned}
            &\text{even number of + in the}\\
            &\text{first 6 coordinates}
        \end{aligned}\right\}\\
        & \cup \left\{\left[0_{D_6}, \pm\left(1,1\right), \sqrt{\frac{1}{2}}, \sqrt{\frac{1}{2}}\right]\right\}\\
        =& S(E_7) \times \{\sqrt{\frac{1}{2}}\} \times \{\sqrt{\frac{1}{2}}\}\\
        =& \phi(X_8).
\end{align*}
\end{proof}


\section{Biangular lines in dimension 9}
The largest known biangular system in dimension 9 is constructed using \Cref{prop-gha}, and is $X_9:=\Bigl\{ [\sqrt{\frac{2}{5}} x, \sqrt{\frac{1}{5}}] \mid x \in S(E_8) \Bigr\}$. We have $|X_9|=240$ and $A(X_9)=\{\pm\frac{1}{5}, \pm\frac{3}{5}\}$.

\begin{lem} \label{lem-avg-d9}
    Let $X \subseteq S^8$ be such that $A(X) \subseteq \{ \pm \frac{1}{5}, \pm \frac{3}{5}\}$ and $|X| \ge 240$. Then $\frac{1}{|X|} \Big|\Bigl\{(x,x') \in X^2 \mid (x,x') \in \{\pm \frac{3}{5}\}\Bigr\}\Big| \ge \frac{151}{3}$.
\end{lem}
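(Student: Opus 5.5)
The plan is to follow the proofs of \Cref{lem-avg-d7} and \Cref{lem-avg-d8}, now with \Cref{thm-del} applied for $n=9$ and $k=2$. As before, set
$$a:=\frac{1}{|X|} \Big|\Bigl\{(x,x') \in X^2 \mid (x,x') \in \{\pm \tfrac{3}{5}\}\Bigr\}\Big|,\qquad b:=\frac{1}{|X|} \Big|\Bigl\{(x,x') \in X^2 \mid (x,x') \in \{\pm \tfrac{1}{5}\}\Bigr\}\Big|.$$
Every ordered pair in $X^2$ is either diagonal or has inner product in $\{\pm\frac{1}{5},\pm\frac{3}{5}\}$. Hence $1+a+b=|X|\ge 240$, which gives $b \ge 239-a$.

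Next I compute $Q^9_2$. The recurrence with $k=1$ gives $\frac{2}{n+2}Q^n_2(\lambda)=n\lambda^2-1$, so $Q^n_2(\lambda)=\frac{n+2}{2}(n\lambda^2-1)$. For $n=9$ this is $Q^9_2(\lambda)=\frac{11}{2}(9\lambda^2-1)$. Its relevant values are
$$Q^9_2(1)=44,\qquad Q^9_2(\pm\tfrac{3}{5})=\tfrac{308}{25},\qquad Q^9_2(\pm\tfrac{1}{5})=-\tfrac{88}{25}.$$
Since $Q^9_2$ is even, the sign of the inner product does not matter. Dividing the Delsarte sum in \Cref{thm-del} by $|X|$ gives
$$0\le 44+\tfrac{308}{25}a-\tfrac{88}{25}b.$$
Multiplying by $\frac{25}{44}$, this becomes $2b\le 25+7a$.

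Finally I combine this with $b\ge 239-a$. This yields $478-2a\le 25+7a$, so $9a\ge 453$, that is $a\ge \frac{151}{3}$, which is the claim. There is no real obstacle; the only point needing care is the arithmetic of the Gegenbauer values.
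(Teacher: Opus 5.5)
Your proposal is correct and follows essentially the same route as the paper: define $a$ and $b$, use $1+a+b=|X|\ge 240$, apply \Cref{thm-del} with $n=9$, $k=2$ to get $0\le 44+\frac{308}{25}a-\frac{88}{25}b$, and combine the two inequalities to obtain $a\ge\frac{151}{3}$. Your Gegenbauer values and the final arithmetic, $9a\ge 453$, are right; you also spell out the derivation of $Q^9_2$ and the combination step, which the paper leaves implicit.
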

\begin{proof}
    Let $$a:=\frac{1}{|X|} \Big|\Bigl\{(x,x') \in X^2 \mid (x,x') \in \{\pm \frac{3}{5}\}\Bigr\}\Big|,$$ $$b:=\frac{1}{|X|} \Big|\Bigl\{(x,x') \in X^2 \mid (x,x') \in \{\pm \frac{1}{5}\}\Bigr\}\Big|.$$ We have $|X^2|=|X|+a|X|+b|X|$, so $1+a+b=|X|\ge 240$. From \Cref{thm-del} we have 
    \begin{align*}
        0 & \le Q^9_2(1)+aQ^9_2(\frac{3}{5})+bQ^9_2(\frac{1}{5})\\
          & = 44 + \frac{308}{25} a -\frac{88}{25}b.
    \end{align*}
    Combining both inequalities gives $a \ge \frac{151}{3}$.
\end{proof}

\begin{lem} \label{lem-fc-d9}
    Let $X \subseteq S^8$ be such that $A(X) \subseteq \{ \pm \frac{1}{5}, \pm \frac{3}{5}\}$ and $|X| \ge 240$. Then $X \cup (-X)$ contains a special triangle.
\end{lem}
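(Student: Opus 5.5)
We follow the scheme of \Cref{lem-fc-d7} and \Cref{lem-fc-d8}. Suppose for contradiction that $X \cup (-X)$ contains no special triangle. By \Cref{lem-avg-d9} the average number of $x \in X$ with $(x,x_0)=\pm\frac{3}{5}$ is at least $\frac{151}{3} \approx 50.33$. Hence some $x_0 \in X$ has at least $51$ such neighbours, and after replacing some of them by their opposites we get $|X_{\frac{3}{5}}(x_0)| \ge 51$. Since there is no special triangle, $-\frac{1}{5} \notin A(X_{\frac{3}{5}}(x_0))$. \Cref{lem-z} then gives a set of roots $Z$ with $A(Z) \subseteq \{0,1\}$, $|Z| = |X_{\frac{3}{5}}(x_0)| \ge 51$ and $\dim(\vspan(Z)) = \dim(\vspan(X_{\frac{3}{5}}(x_0),x_0)) \le 9$.

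Next we apply \Cref{thm-cam}. Rank at most $8$ allows at most $36$ roots, so the rank must be $9$, and then $|X_{\frac{3}{5}}(x_0)| \le \frac{1}{2}\cdot 9\cdot 8 = 36$. This contradicts $|X_{\frac{3}{5}}(x_0)| \ge 51$, which finishes the proof immediately. I should double-check that the ranks $4$ and $5$, which are missing from \Cref{thm-cam}, cannot exceed $51$ either. They cannot: for instance, the bound for rank $6$ is already $16$, and the maximal size is monotone in the rank because a lattice of smaller rank embeds in one of larger rank (add an orthogonal $\Z$ summand). Also, $Z$ lies in an integral lattice, because it consists of norm-$2$ vectors with pairwise inner products in $\{0,1\}$, so $\langle Z \rangle_\Z$ is integral. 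This is the same implicit use as in the earlier lemmas.

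So, unlike dimensions $7$ and $8$, no computer search over the Gram matrix of $x_0,x_1,\dots,x_8$ and cliques in $X_{-\frac{1}{5}}(x_0)$ should be needed: the average degree of $\frac{3}{5}$-neighbours already exceeds the bound of \Cref{thm-cam} in rank $9$. The only step needing care is the sign normalisation. Each $x \in X$ with $(x,x_0)=\pm\frac{3}{5}$ contributes exactly one of $x,-x$ to $X_{\frac{3}{5}}(x_0)$, so $|X_{\frac{3}{5}}(x_0)|$ equals the number of such neighbours, as in \eqref{eqd8}.

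If for some reason the direct bound did not apply (for instance if the rank-$9$ bound were larger than I recall), the fallback would be to mirror the earlier proofs. We would write $|X| = 1 + |X_{\frac{3}{5}}(x_0)| + |X_{-\frac{1}{5}}(x_0)|$, choose a basis $x_0,\dots,x_8$ from $X_{\frac{3}{5}}(x_0)$, and bound $|X_{-\frac{1}{5}}(x_0)|$ by a clique search in the graph $G_\Gamma$ over all graphs $\Gamma$ on $8$ vertices using Appendix \ref{magma1}. That search would be the computational bottleneck, but the counting argument above should make it unnecessary.
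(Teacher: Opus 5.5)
Your proposal is correct and is essentially the paper's own proof. Assuming no special triangle, Lemma~\ref{lem-avg-d9} gives an $x_0$ with $|X_{\frac{3}{5}}(x_0)| \ge 51$, Lemma~\ref{lem-z} turns this into at least $51$ roots with mutual inner products in $\{0,1\}$ spanning dimension at most $9$, and Theorem~\ref{thm-cam} caps such a set at $36$, so no computer search is needed. Your extra remarks (monotonicity in the rank to cover ranks $4$ and $5$, and integrality of $\langle Z\rangle_\Z$) fill in details the paper leaves implicit, and the fallback paragraph is unnecessary.
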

\begin{proof}
    Assume that there is no special triangle in $X \cup (-X)$. From \Cref{lem-avg-d9} we have
    $$\frac{1}{|X|}\sum_{x_0 \in X} \Big|\Bigl\{y \in X \mid (y,x_0) \in \{\pm\frac{3}{5}\}\Bigr\}\Big| \ge \frac{151}{3} \approx 50.33.$$
    Consequently there exists $x_0 \in X$ such that $\Big|\Bigl\{y \in X \mid (y,x_0) \in \{\pm\frac{3}{5}\}\Bigr\}\Big| \ge 51$, so $|X_{\frac{3}{5}}(x_0)| \ge 51$. Since $X \cup (-X)$ does not contain a special triangle, we have $\frac{-1}{5} \notin A(X_{\frac{3}{5}}(x_0))$. From \Cref{lem-z}, we have a set of roots $Z$ with $A(Z) \subseteq \{ 0,1\}$, $|Z|\ge 51$ and $\dim(\vspan(Z)) \le 9$. This contradicts \Cref{thm-cam}.
\end{proof}

\begin{lem} \label{lem-minroots-d9}
    Let $Y \subset \sqrt{3}S^9$ be such that $A(Y) \subseteq \{-1,0,1,2\}$, $|Y| \ge 240$, and let $r \in \sqrt{2}S^9$ be such that for all $y \in Y$ we have $( y,r ) =1$. Let $L:=\langle Y,r \rangle_\Z$. Then $|S(L) \cap r^\perp| \ge 102$.
\end{lem}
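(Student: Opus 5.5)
Mirroring \Cref{lem-minroots-d7} and \Cref{lem-minroots-d8}, I will obtain the bound from three ingredients: \Cref{lem-min-roots}, \Cref{lem-avg-d9}, and the evenness of $|S(L)\cap r^\perp|$.

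First, recover the line system. Set $X:=\{\psi(y)\mid y\in Y\}$ with $\psi(y)=\sqrt{\frac{2}{5}}(y-\frac{1}{2}r)$. By \Cref{prop-sw}, $X$ consists of unit vectors in $r^\perp\cong\R^9$, so $X\subseteq S^8$, with $A(X)\subseteq\{\pm\frac{1}{5},\pm\frac{3}{5}\}$. Moreover $|X|=|Y|\ge 240$, since $\psi$ is injective on $Y$. Applying $\phi$ to $X$ gives back $Y$ up to an isometry sending $r$ to $[0,\sqrt{2}]$. We may therefore apply \Cref{lem-min-roots} directly, with $L=\langle Y,r\rangle_\Z$:
$$|S(L)\cap r^\perp|\ \ge\ \frac{2}{|X|}\left|\left\{(x,x')\in X^2 \;\middle|\; (x,x')=\pm\tfrac{3}{5}\right\}\right|.$$

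Second, \Cref{lem-avg-d9} bounds the right-hand side below by $2\cdot\frac{151}{3}=\frac{302}{3}\approx 100.67$. Since $|S(L)\cap r^\perp|$ is an integer, this gives $|S(L)\cap r^\perp|\ge 101$.

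Third, roots come in pairs $\pm u$, and $u\in r^\perp$ if and only if $-u\in r^\perp$. Hence $|S(L)\cap r^\perp|$ is even, and so it is at least $102$.

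The only point requiring care is the bookkeeping in the first step: checking that the hypotheses on $Y$ and $r$ match the normalization used in \Cref{lem-min-roots}, namely that $Y$ arises as $\phi(X)$ with $r=[0,\sqrt{2}]$ up to an orthogonal transformation. Once that is verified, the rest is arithmetic.
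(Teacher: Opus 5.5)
Your proposal is correct and matches the paper's proof, which simply cites \Cref{lem-min-roots}, \Cref{lem-avg-d9}, and the evenness of $|S(L)\cap r^\perp|$. You get $2\cdot\frac{151}{3}=\frac{302}{3}>100$, so the count is at least $101$, and evenness gives $102$; your transfer from the abstract $(Y,r)$ to the normalized form $\phi(X)$, $r=[0,\sqrt{2}]$ via $\psi$ is a step the paper leaves implicit, and you handle it correctly.
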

\begin{proof}
This is a direct consequence of \Cref{lem-min-roots}, \Cref{lem-avg-d9}, and the fact that $|S(L) \cap r^\perp|$ is even.
\end{proof}

\begin{lem} \label{lem-rank-d9}
    Let $Y \subset \sqrt{3}S^9$ be such that $A(Y) \subseteq \{-1,0,1,2\}$, $|Y| \ge 240$, and $ r \in \sqrt{2}S^9$ such that for all $ y \in Y, ( y,r ) =1$. Let $L:=\langle Y,r \rangle_\Z$, and $R:=\langle S(L) \rangle_\Z$. Then $\rank(R) = \rank(L) = 10$.
\end{lem}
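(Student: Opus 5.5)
I will follow the template of \Cref{lem-rank-d7} and \Cref{lem-rank-d8}. First I set $\psi(y):=\sqrt{\frac{2}{5}}(y-\frac{1}{2}r)$ and $X:=\psi(Y)$. If $\rank(L)=d\le 9$, then by \Cref{prop-sw} $X$ is a set of at least $240$ unit vectors in $\R^{d-1}$, with $d-1\le 8$ and $A(X)\subseteq\{\pm\frac{1}{5},\pm\frac{3}{5}\}$. This contradicts \Cref{mainthm8} (and \Cref{mainthm7}, monotonically in dimension), since those bound such systems by $126$. Hence $\rank(L)=10$.

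Next I suppose $V:=\vspan(L)\cap R^\perp$ has $\dim(V)\ge 1$. By \Cref{lem-fc-d9}, $X\cup(-X)$ contains a special triangle, so $Y\cup(r-Y)$ does too. By \Cref{lem-pvc}, $p_V$ is then constant up to sign, equal to $\pm P$, on $Y\cup(r-Y)$. By \Cref{lem-minroots-d9}, $|S(R)\cap r^\perp|=|S(L)\cap r^\perp|\ge 102$. This is a strong filter: among root lattices of rank at most $9$ it essentially forces components like $E_8$, $E_7$, $D_n$ ($n\ge 7$) or $A_8$ together with a root $r$ whose orthogonal complement retains many roots.

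I then split on $\rank(R)$.

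\emph{Case $\rank(R)\le 8$.} The projection $p_R$ sends $Y\cup(r-Y)$ into $B(R,r,N(P))=\{x\in R^*\mid N(x)=3-N(P),(x,r)=1\}$. The inner products land in $\{-1+N(P),\pm N(P),1\pm N(P),2-N(P)\}$. Passing to the classes $\{x,r-x\}$ makes $\overline{p_R(Y)}$ a clique in $G(R,r,N(P))$. Each image point has at most two preimages $z\pm P$, so it suffices to show, by the computer search of Appendix \ref{magma2}, that every such clique has size less than $120$. Then $|Y|<240$, a contradiction. Only finitely many values of $N(P)\in[0,3]$ occur, since norms in $R^*$ below $3$ are finitely many.

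\emph{Case $\rank(R)=9$.} I write $Y\cup(r-Y)\subseteq B'(R,r,N(P))=\{[x,\pm\sqrt{N(P)}]\}$ and search for cliques in $G'(R,r,N(P))$ with Appendix \ref{magma3}. I expect the bound $|Y|\le 240$ to come out, with extremal cliques only for lattices like $E_8\oplus A_1\oplus 0$ (and possibly sublattices such as $D_8\oplus A_1\oplus 0$ or $E_7\oplus A_1^2\oplus 0$), with $r$ the $A_1$ root. In each extremal case I would argue as in dimensions $7$ and $8$. Using $S(E_8^*)=S(E_8)$, or \Cref{lem-Dndual}/\Cref{lem-E7dual} to embed the dual into a coordinate description, I would show that the candidate set has at most twice as many vectors as classes. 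Then pigeonhole produces $y,y'\in Y$ differing only in the sign of the last coordinate, or with $y+y'-r$ of that form. This gives $[0,\dots,0,\sqrt{2}]\in R$, contradicting $R\perp V$.

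The main obstacle is the computational case $\rank(R)=9$ with $N(P)$ small. There $R^*$ has many short vectors, the graphs are large (hundreds to thousands of vertices), and the clique searches become expensive. I would first prune using the root count $\ge 102$ and the $r^\perp$ condition, and then exploit the stabilizer of $r$ in the Weyl group to reduce the clique search by symmetry.
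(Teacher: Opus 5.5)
Your proposal is correct. Its skeleton matches the paper's proof: the rank of $L$ via \Cref{mainthm8}, the special triangle from \Cref{lem-fc-d9}, \Cref{lem-pvc}, the root bound $102$, and the two computer searches for $\rank(R)\le 8$ and $\rank(R)=9$. Your guess for the extremal rank-$9$ outcomes is exactly what the search returns: $E_8\oplus A_1\oplus 0$, $D_8\oplus A_1\oplus 0$ (two cliques) and $E_7\oplus A_1^2\oplus 0$, with $r$ the $A_1$ root and $N(P)=\frac12$.

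The difference is in the final elimination. The paper disposes of $E_8$ via $D_8\subset E_8$ and then reads an offending pair off the explicitly computed cliques: $[e_1+e_2,\sqrt{\frac12},\pm\sqrt{\frac12}]$ in both $D_8$ cliques, and $[\pm(e_1-e_2),0,\sqrt{\frac12},\sqrt{\frac12}]$ in the $E_7$ clique. Your dimension-$7$ style pigeonhole needs only $R$, $r$ and $N(P)=\frac12$, not the cliques themselves, which is a genuine simplification.

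You should make the count precise, though. For $D_8\oplus A_1\oplus 0$ the possible $R^*$-parts are $S(D_8^*)\times\{\sqrt{\frac12}\}$, and $|S(D_8^*)|=112+256=368>240$. So the dimension-$7$ inequality $|S(E_6)|\le|Y|$ does not carry over literally, and your phrase ``at most twice as many'' must be the strict inequality $368<2|Y|$. The argument then runs in two branches:
\begin{itemize}
\item If two elements $y,y'$ of $Y$ have antipodal $D_8^*$-components, then $-2\notin A(Y)$ forces equal last signs. Hence $y+y'-r=[0,\dots,0,\pm\sqrt2]$ is a root of $L$ lying in $V$, a contradiction.
\item Otherwise the components of $Y$ lie in an antipodal-free subset of $S(D_8^*)$, of size at most $184<240\le|Y|$. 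So some component occurs with both last signs, and the difference of those two vectors is $[0,\dots,0,\pm\sqrt2]$, again a root of $L$ in $V$.
\end{itemize}
For $E_8$ and for $E_7\oplus A_1^2$ there are $240$ candidates, and the corresponding bound is $120<240$.

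Your aside that the bound $102$ admits $A_8$ or $D_7$ components is inaccurate: every root lattice of rank at most $9$ with such a component has fewer than $102$ roots. Nothing in your argument depends on it.
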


\begin{proof}
    Let $\psi(y) := \sqrt{\frac{2}{5}}(y-\frac{1}{2}r)$ be defined for $y \in Y \cup (r-Y)$, and $X:=\{ \psi(y) \mid y \in Y\}$. If $\rank(L)=d$, then by \Cref{prop-sw} $X$ represents a biangular line system in $\mathbb{R}^{d-1}$ with $A(X) \subseteq \{ \pm \frac{1}{5}, \pm \frac{3}{5}\}$. Thus by \Cref{mainthm8}, $\rank(L)=10$.
    
    Assume that $\dim(V) \ge 1$ where $V:=\vspan(L) \cap R^\perp$. From \Cref{lem-fc-d9}, $X \cup (-X)$ contains a special triangle. Thus from \Cref{lem-pvc}, $p_V$ is constant up to sign on $Y \cup (r-Y)$. Furthermore, from \Cref{lem-minroots-d9} we have $|S(R)\cap r^\perp| = |S(L)\cap r^\perp| \ge 102$.
    
   First consider the case $\rank(R) \le 8$. The orthogonal projection $p_R$ onto $R$ verifies $p_R(Y \cup (r-Y)) \subseteq B(R,r,N(P))$ where $$B(R,r,N(P)) := \{x \in R^* \mid N(x)=3-N(P), (x,r)=1\}$$ contains vectors of norm $3-N(P)$, and $$A(p_R(Y \cup (r-Y))) \subseteq \{ -1+N(P), \pm N(P), 1\pm N(P), 2-N(P)\}.$$ Furthermore, $B(R,r,N(P))$ can be split into equivalence classes $$\overline{B}(R,r,N(P)) := \Bigl\{\{x, r-x\} \mid x \in B(R,r,N(P))\Bigr\}.$$ Then for any $y \in Y$, $p_R(y)$ is contained in a unique class. In addition, given that $y \in Y \Rightarrow r-y \notin Y$, for a class $\{x, r-x\}$ only one of $x$ and $r-x$ can be in $p_R(Y)$. Thus $\overline{p_R(Y)}:=\Bigl\{ \{x,r-x \} \mid x \in p_R(Y) \Bigr\} \subseteq \overline{B}(R,r,N(P))$, and $|\overline{p_R(Y)}|=|p_R(Y)|$. Additionally, for $x,x' \in B(R,r,N(P))$, we have $x=r-x' \Leftrightarrow (x,x')=N(P)-2 $. Let $G(R,r,N(P))$ be the graph with vertex set $\overline{B}(R,r,N(P))$, and edges $\{x,r-x\} \sim \{x',r-x'\}$ when $(x,x') \in \{ -1+N(P), \pm N(P), 1\pm N(P), 2-N(P)\}$. Then $\overline{p_R(Y)}$ is a clique in $G(R,r,N(P))$.
    
    A computer search through all root lattices $R$ of rank at most $8$, all possible $r \in S(R)$ such that $|S(R)\cap r^\perp| \ge 102$, and all values $N(P) \in [0,3]$ shows that $|p_R(Y)| < 120$. Lastly, for $z \in p_R(Y)$, $z$ has at most two preimages by $p_R$ in $Y$, which are $z+P$ and $z-P$. Thus $|Y|\le 2 |p_R(Y)|<240$, which contradicts the assumptions of the lemma. The algorithm in Appendix \ref{magma2} can be used for this computer search.

    Now we consider the case $\rank(R)=9$. Then $Y \cup (r-Y) \subseteq B'(R,r,N(P))$ where $B'(R,r,N(P)):=\{[x, \pm\sqrt{N(P)}] \mid x\in R^*, N(x)=3-N(P), (x,r)=1\}$. In this case as well, $B'(R,r,N(P))$ can be split into classes $\overline{B'}(R,r,N(P)) := \Bigl\{\{x, r-x\} \mid x \in B'(R,r,N(P))\Bigr\}$. Then $\overline{Y}:=\Bigl\{\{y,r-y\} \mid y \in Y \Bigr\} \subseteq \overline{B'}(R,r,N(P))$, and $|\overline{Y}|=|Y|$. Let $G'(R,r,N(P))$ be the graph with vertex set $\overline{B'}(R,r,N(P))$ and edges $\{y,r-y\} \sim \{y', r-y'\}$ when $(y,y') \in \{-1, 0, 1, 2\}$. Then $\overline{Y}$ is a clique in $G'(R,r,N(P))$.
    
    A computer search through all root lattices $R$ of rank $9$, all possible $r \in S(R)$ such that $|S(R)\cap r^\perp| \ge 102$, and and all values $N(P) \in [0,3]$ shows that $|Y| \le 240$. Furthermore, there are exactly four cliques of size $240$. One clique is for $R=E_8 \oplus A_1 \oplus 0$, $r=[0_{E_8}, \sqrt{2},0]$ and $N(P)=\frac{5}{2}$. Two cliques are for $R=D_8 \oplus A_1 \oplus 0$, $r=[0_{D_8}, \sqrt{2},0]$ and $N(P)=\frac{5}{2}$. The last clique is for $R=E_7 \oplus A_1^2 \oplus 0$, $r=[0_{E_7 \oplus A_1}, \sqrt{2},0]$ and $N(P)=\frac{5}{2}$. The algorithm in Appendix \ref{magma3} can be used for this computer search.
    
    Since $D_8 \subset E_8$, we have $\left(E_8\oplus A_1 \oplus 0 \right)^* \subset \left(D_8\oplus A_1 \oplus 0 \right)^*$, so we only have to consider $R=D_8 \oplus A_1 \oplus 0$ and $R=E_7 \oplus A_1^2 \oplus 0$.

    Consider the case $R=D_8 \oplus A_1 \oplus 0$. From \Cref{lem-Dndual}, $D_8^* = \Z^8 \cup \left(\frac{1}{2} + \Z\right)^8$. Thus, $S(D_8^*) = S(D_8) \cup \left\{ [\pm\frac{1}{2}, \dots, \pm\frac{1}{2}] \right\}$, and
    \begin{align*}
        Y \subseteq& \left\{ \left[x,\sqrt{\frac{1}{2}},\pm\sqrt{\frac{1}{2}}\right] \;\middle|\;  x \in S(D_8^*)\right\}\\
        =& \left\{ \left[\pm e_i \pm e_j,\sqrt{\frac{1}{2}},\pm\sqrt{\frac{1}{2}}\right] \;\middle|\; 1 \le i < j \le 8 \right\}\\
        & \cup \left\{ \left[\pm \frac{1}{2}, \dots, \pm\frac{1}{2}, \sqrt{\frac{1}{2}},\pm\sqrt{\frac{1}{2}}\right] \right\}.
    \end{align*}
    The two cliques of size $240$ correspond to
    \begin{align*}
        \overline{Y_1} :=& \left\{ \left[ e_i\pm e_j, \sqrt{\frac{1}{2}}, \pm\sqrt{\frac{1}{2}}\right]^* \; \middle|\; 1 \le i < j \le 8 \right\}\\
        & \cup \left\{ \left[\frac{1}{2}, \pm \frac{1}{2}, \dots, \pm\frac{1}{2}, \sqrt{\frac{1}{2}},\pm\sqrt{\frac{1}{2}}\right]^* \; \middle|\;\begin{aligned}
            &\text{even number of + in the}\\
            &\text{first 8 coordinates}
        \end{aligned}\right\}
    \end{align*}
    and
    \begin{align*}
        \overline{Y_2} :=& \left\{ \left[ e_i\pm e_j, \sqrt{\frac{1}{2}}, \pm\sqrt{\frac{1}{2}}\right]^* \; \middle|\; 1 \le i < j \le 8 \right\}\\
        & \cup \left\{ \left[\frac{1}{2}, \pm \frac{1}{2}, \dots, \pm\frac{1}{2}, \sqrt{\frac{1}{2}},\pm\sqrt{\frac{1}{2}}\right]^* \; \middle|\;\begin{aligned}
            &\text{odd number of + in the}\\
            &\text{first 8 coordinates}
        \end{aligned}\right\}
    \end{align*}
    If $\overline{Y}=\overline{Y_1}$ or $\overline{Y}=\overline{Y_2}$, then $y :=\left[ e_1 + e_2, \sqrt{\frac{1}{2}},\sqrt{\frac{1}{2}}\right] \in Y \cup (r-Y)$ and $y' :=\left[ e_1 + e_2, \sqrt{\frac{1}{2}},-\sqrt{\frac{1}{2}}\right] \in Y \cup (r-Y)$, so $y-y'=[0_{D_8 \oplus A_1}, \sqrt{2}] \in R$, which contradicts $R=D_8 \oplus A_1 \oplus 0$.

    Now consider the case $R=E_7 \oplus A_1^2 \oplus 0$. The lattice $E_7^*$ was computed in \Cref{lem-E7dual}, and we have
    \begin{align*}
        Y \subseteq & \left\{ \left[x,\sqrt{\frac{1}{2}},\pm\sqrt{\frac{1}{2}}\right] \;\middle|\; x \in S((E_7 \oplus A_1)^*) \right\}\\
        = & \left\{ \left[x,0,\sqrt{\frac{1}{2}},\pm\sqrt{\frac{1}{2}}\right] \;\middle|\; x \in S(E_7^*) \right\}\\
        & \cup \left\{ \left[x,\pm\sqrt{\frac{1}{2}},\sqrt{\frac{1}{2}},\pm\sqrt{\frac{1}{2}}\right] \;\middle|\; x \in E_7^*, N(x)=\frac{3}{2} \right\}\\
        & \cup \left\{ \left[0_{E_7},\pm2\sqrt{\frac{1}{2}},\sqrt{\frac{1}{2}},\pm\sqrt{\frac{1}{2}}\right] \right\}\\
        = & \left\{ \left[\pm(e_i-e_j),0,\sqrt{\frac{1}{2}},\pm\sqrt{\frac{1}{2}}\right] \;\middle|\; 1\le i < j\le 8 \right\}\\
        & \cup \left\{ \left[\sigma\left(\frac{1}{2}, \frac{1}{2}, \frac{1}{2}, \frac{1}{2},\frac{-1}{2},\frac{-1}{2},\frac{-1}{2},\frac{-1}{2}\right),0,\sqrt{\frac{1}{2}},\pm\sqrt{\frac{1}{2}}\right] \;\middle|\; \sigma \in \Sym(8) \right\}\\
        & \cup \left\{ \left[\pm\sigma\left(\frac{3}{4}, \frac{3}{4}, \frac{-1}{4}, \dots, \frac{-1}{4}\right),\pm\sqrt{\frac{1}{2}},\sqrt{\frac{1}{2}},\pm\sqrt{\frac{1}{2}}\right] \;\middle|\; \sigma \in \Sym(8) \right\}\\
        & \cup \left\{ \left[0_{E_7},\pm2\sqrt{\frac{1}{2}},\sqrt{\frac{1}{2}},\pm\sqrt{\frac{1}{2}}\right] \right\}.
    \end{align*}
    The clique of size 240 then corresponds to
    \begin{align*}
        \overline{Y} = & \left\{ \left[\pm(e_i-e_j),0,\sqrt{\frac{1}{2}},\sqrt{\frac{1}{2}}\right]^* \;\middle|\; 1\le i < j\le 8 \right\}\\
        & \cup \left\{ \left[\sigma\left(\frac{1}{2}, \frac{1}{2}, \frac{1}{2}, \frac{1}{2},\frac{-1}{2},\frac{-1}{2},\frac{-1}{2},\frac{-1}{2}\right),0,\sqrt{\frac{1}{2}},\sqrt{\frac{1}{2}}\right]^* \;\middle|\; \sigma \in \Sym(8) \right\}\\
        & \cup \left\{ \left[\pm\sigma\left(\frac{3}{4}, \frac{3}{4}, \frac{-1}{4}, \dots, \frac{-1}{4}\right),\pm\sqrt{\frac{1}{2}},\sqrt{\frac{1}{2}},\sqrt{\frac{1}{2}}\right]^* \;\middle|\; \sigma \in \Sym(8) \right\}\\
        & \cup \left\{ \left[0_{E_7},\pm2\sqrt{\frac{1}{2}},\sqrt{\frac{1}{2}},\sqrt{\frac{1}{2}}\right]^* \right\}.
    \end{align*}
    Then $R \ni \left[e_1-e_2,0,\sqrt{\frac{1}{2}},\sqrt{\frac{1}{2}}\right] + \left[-e_1+e_2,0,\sqrt{\frac{1}{2}},\sqrt{\frac{1}{2}}\right] - r = [0_{E_7\oplus A_1^2}, \sqrt{2}]$, which contradicts $R=E_7 \oplus A_1^2 \oplus 0$.
\end{proof}

\begin{thm} \label{mainthm9}
    Let $X \subset S^8$ be such that $A(X) \subseteq \{\pm\frac{1}{5}, \pm\frac{3}{5}\}$. Then $|X| \le 240$, and $|X| = 240$ only if $X\simeq X_9$.
\end{thm}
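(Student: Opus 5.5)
The proof will follow the pattern of \Cref{mainthm7} and \Cref{mainthm8}. Let $X \subset S^8$ satisfy $A(X) \subseteq \{\pm\frac{1}{5}, \pm\frac{3}{5}\}$ and $|X| \ge 240$. Put $Y=\phi(X)$ with $\phi : x \mapsto [\sqrt{\frac{5}{2}}x, \sqrt{\frac{1}{2}}]$ and $r=[0_{\R^9},\sqrt{2}]$. By \Cref{prop-sw}, $A(Y)\subseteq\{-1,0,1,2\}$ and $(y,r)=1$ for all $y\in Y$. Let $L:=\langle Y,r\rangle_\Z$ and $R:=\langle S(L)\rangle_\Z$.

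By \Cref{lem-rank-d9}, $\rank(R)=\rank(L)=10$. Hence $R$ has finite index in $L$, and so $Y\subseteq L\subseteq R^*$. In particular $Y\subseteq B(R,r):=\{x\in R^*\mid N(x)=3,(x,r)=1\}$. By \Cref{lem-minroots-d9}, $|S(R)\cap r^\perp|\ge 102$. This restricts the possible pairs $(R,r)$: $R$ ranges over root lattices of rank $10$ (orthogonal sums of $A_n,D_n,E_n$), and $r$ over the orbit representatives of $S(R)$ under the automorphism group, subject to this root count. As before, I would form the classes $\overline{B}(R,r)=\{\{x,r-x\}\}$ and the graph $G(R,r)$ with adjacency when the inner product lies in $\{-1,0,1,2\}$. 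Then $\overline{Y}$ is a clique of size $|Y|$ in $G(R,r)$. A computer search with the code of Appendix \ref{magma4} will show that every clique has size at most $240$, and will list the $(R,r)$ for which a clique of size $240$ exists. I expect these to be $R=E_8\oplus A_1^2$, $R=D_8\oplus A_1^2$ and $R=E_7\oplus A_1^3$, with $r$ a root of the last $A_1$ summand, mirroring the survivors in the proof of \Cref{lem-rank-d9}.

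The final step identifies each extremal clique with $\phi(X_9)$ up to equivalence. For $R=E_8\oplus A_1^2$ with $r$ in the last $A_1$, $E_8$ is unimodular, so $S(E_8^*)=S(E_8)$. Norm considerations then force $Y\subseteq S(E_8)\times\{0\}\times\{\sqrt{\frac{1}{2}}\}$ together with vectors having a nonzero $A_1^*$ coordinate in the middle summand. I would check that the clique of size $240$ is exactly $S(E_8)\times\{0\}\times\{\sqrt{\frac{1}{2}}\}$ up to taking $r-y$ representatives, which is $\phi(X_9)$.

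For $R=D_8\oplus A_1^2$, use $D_8\subset E_8$ and $S(D_8^*)=S(D_8)\cup\{[\pm\frac12,\dots,\pm\frac12]\}$ from \Cref{lem-Dndual}. I would write out the explicit candidate set and show that any clique of size $240$ consists of $S(D_8)$ together with one parity class of half-integer vectors. Either parity class gives an $E_8$ (negating a coordinate swaps the two), so the clique is again equivalent to $\phi(X_9)$. This is the same argument as for $\overline{Y_1},\overline{Y_2}$ in \Cref{mainthm8}.

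The case $R=E_7\oplus A_1^3$ is the main obstacle. There I would use the explicit $E_7^*$ from \Cref{lem-E7dual} and the inclusion $E_7\oplus A_1\subset E_8$. The aim is to rewrite the clique by an orthogonal change of coordinates merging $E_7^*$ with one $A_1^*$ factor, as was done for $E_7\simeq D_6\oplus A_1$ in \Cref{mainthm8}, so that it becomes $S(E_8)\times\{0\}\times\{\sqrt{\frac{1}{2}}\}$. If the search returns other extremal cliques, the fallback is the argument of \Cref{lem-rank-d9}. It would eliminate them either by exhibiting two elements of $Y\cup(r-Y)$ whose difference, or sum minus $r$, is a root outside $R$, contradicting $R=\langle S(L)\rangle_\Z$, or by showing directly that they generate $E_8$.
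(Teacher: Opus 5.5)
Your route is the paper's: reduce to $\rank(R)=\rank(L)=10$ via \Cref{lem-rank-d9}, run the Appendix~\ref{magma4} search under $|S(R)\cap r^\perp|\ge 102$, and identify each extremal clique with $\phi(X_9)$. The survivors are $E_8\oplus A_1^2$, $D_8\oplus A_1^2$ and $E_7\oplus A_1^3$, and the paper finds $1$, $2$ and $2$ cliques for them respectively.

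\textbf{The identification step has a concrete error.} Since $\sqrt{\frac{5}{2}}\cdot\sqrt{\frac{1}{5}}=\sqrt{\frac{1}{2}}$, we have $\phi(X_9)=S(E_8)\times\{\sqrt{\frac{1}{2}}\}\times\{\sqrt{\frac{1}{2}}\}$, not $S(E_8)\times\{0\}\times\{\sqrt{\frac{1}{2}}\}$. The set you name has norm $\frac{5}{2}$, so it is not even contained in $B(R,r)$, and the check you propose would fail.

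The correct norm analysis for $R=E_8\oplus A_1^2$ goes as follows. Take $[x,\frac{k}{\sqrt{2}},\sqrt{\frac{1}{2}}]$ with $x\in E_8^*=E_8$. You need $N(x)+\frac{k^2}{2}=\frac{5}{2}$, and since $E_8$ is even this forces $N(x)=2$ and $k=\pm1$. So no candidate has middle coordinate $0$, and
$$B(R,r)=S(E_8)\times\{\pm\sqrt{\tfrac{1}{2}}\}\times\{\sqrt{\tfrac{1}{2}}\}.$$
Its $240$ classes $\{y,r-y\}$ form a clique. Choosing in each class the representative with middle coordinate $+\sqrt{\frac{1}{2}}$ gives exactly $\phi(X_9)$.

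The same correction applies elsewhere. Your target in the $E_7\oplus A_1^3$ case must be fixed in the same way. In the $D_8$ case, each root of $D_8$ appears with both signs of the middle coordinate. That is why the clique has $112$ classes from $S(D_8)$ plus $128$ from one parity class of half-integer vectors. The fix is immediate, but as written the step is false.

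\textbf{Smaller differences from the paper.} The paper does not treat $E_8\oplus A_1^2$ separately. It notes $(E_8\oplus A_1^2)^*\subset(D_8\oplus A_1^2)^*$, so that clique is literally one of the two $D_8$ cliques, and those two are related by negating a coordinate.

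For $E_7\oplus A_1^3$, the paper lists the two cliques explicitly, related by swapping the two non-$r$ $A_1$ coordinates. It then verifies by computer that the root sublattice of $\langle Y_3,r\rangle_\Z$ is $E_8\oplus A_1^2$, which is your fallback. Your primary plan also works and avoids that computer check. Since $E_7\oplus A_1\subset E_8$, one has $(E_8\oplus A_1^2)^*\subset(E_7\oplus A_1^3)^*$. The two $E_7$ cliques are precisely the $E_8$ clique under the two ways of merging $E_7$ with a non-$r$ $A_1$ summand. In the paper's listing of $\overline{Y_3}$, the first nine coordinates run over the $126+112+2=240$ roots of an $E_8\supset E_7\oplus A_1$.
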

\begin{proof}
    Let $X \subset S^8$ be such that $A(X) \subseteq \{\pm\frac{1}{5}, \pm\frac{3}{5}\}$, and assume that $|X| \ge 240$. Define $Y=\{ \phi(x) \mid x \in X\}$ where $\phi : x \mapsto [\sqrt{\frac{5}{2}} x, \sqrt{\frac{1}{2}}]$, and $r=[0_{\mathbb{R}^9}, \sqrt{2}]$. Then by \Cref{prop-sw}, $A(Y) \subseteq \{-1, 0, 1, 2\}$ and for $y \in Y$, $(r,y)=1$. Let $L:=\langle Y,r \rangle_\Z$, and $R:=\langle S(L)\rangle_\Z$. From \Cref{lem-rank-d9}, $\rank(L)=\rank(R)=10$. Then $Y \subseteq B(R,r)$ where $B(R,r):= \{x\in R^* \mid N(x)=3, (x,r)=1\}$. We can split $B(R,r)$ into classes $\overline{B}(R,r) := \Bigl\{ \{x, r-x \} \mid x \in B(R,r) \Bigr\}$. Then $\overline{Y}:=\Bigl\{\{y,r-y\} \mid y \in Y \Bigr\} \subseteq\overline{B}(R,r)$. Define the graph $G(R,r)$ with vertex set $\overline{B}(R,r)$ and edges $\{y,r-y\} \sim \{y',r-y'\}$ when $(y,y') \in \{-1, 0, 1, 2\}$. Then $\overline{Y}$ is a clique in $G(R,r)$. Furthermore, from \Cref{lem-minroots-d9} we have $|S(R)\cap r^\perp| \ge 102$. A computer search shows that $|Y| \le 240$, and there are five cliques of size $240$. One clique is for $R=E_8 \oplus A_1^2$ and $r=[0_{E_8 \oplus A_1}, \sqrt{2}]$. Two cliques are for $R=D_8 \oplus A_1^2$ and $r=[0_{D_8 \oplus A_1}, \sqrt{2}]$. The other two cliques are for $R=E_7 \oplus A_1^3$ and $r=[0_{E_7 \oplus A_1^2}, \sqrt{2}]$. The algorithm in Appendix \ref{magma4} can be used for these computer searches.

    Since $D_8 \subset E_8$, we have $(E_8 \oplus A_1^2)^* \subset (D_8 \oplus A_1^2)^*$, so there are only four cliques. The cliques for $R=D_8 \oplus A_1^2$ are
    \begin{align*}
        \overline{Y_1} :=& \left\{ \left[ e_i\pm e_j, \sqrt{\frac{1}{2}}, \pm\sqrt{\frac{1}{2}}\right]^* \; \middle|\; 1 \le i < j \le 8 \right\}\\
        & \cup \left\{ \left[\frac{1}{2}, \pm \frac{1}{2}, \dots, \pm\frac{1}{2}, \sqrt{\frac{1}{2}},\pm\sqrt{\frac{1}{2}}\right]^* \; \middle|\;\begin{aligned}
            &\text{even number of + in the}\\
            &\text{first 8 coordinates}
        \end{aligned}\right\}
    \end{align*}
    and
    \begin{align*}
        \overline{Y_2} :=& \left\{ \left[ e_i\pm e_j, \sqrt{\frac{1}{2}}, \pm\sqrt{\frac{1}{2}}\right]^* \; \middle|\; 1 \le i < j \le 8 \right\}\\
        & \cup \left\{ \left[\frac{1}{2}, \pm \frac{1}{2}, \dots, \pm\frac{1}{2}, \sqrt{\frac{1}{2}},\pm\sqrt{\frac{1}{2}}\right]^* \; \middle|\;\begin{aligned}
            &\text{odd number of + in the}\\
            &\text{first 8 coordinates}
        \end{aligned}\right\}
    \end{align*}
    which were also considered in the proof of \Cref{lem-rank-d9}. Since $\overline{Y_2}$ can be obtained from $\overline{Y_1}$ by changing the sign of the 8th coordinate, we have $\overline{Y_1} \simeq \overline{Y_2}$. The cliques for $R=E_7 \oplus A_1^3$ are
    \begin{align*}
        \overline{Y_3} = & \left\{ \left[\pm(e_i-e_j),0,\sqrt{\frac{1}{2}},\sqrt{\frac{1}{2}}\right]^* \;\middle|\; 1\le i < j\le 8 \right\}\\
        & \cup \left\{ \left[\sigma\left(\frac{1}{2}, \frac{1}{2}, \frac{1}{2}, \frac{1}{2},\frac{-1}{2},\frac{-1}{2},\frac{-1}{2},\frac{-1}{2}\right),0,\sqrt{\frac{1}{2}},\sqrt{\frac{1}{2}}\right]^* \;\middle|\; \sigma \in \Sym(8) \right\}\\
        & \cup \left\{ \left[\pm\sigma\left(\frac{3}{4}, \frac{3}{4}, \frac{-1}{4}, \dots, \frac{-1}{4}\right),\pm\sqrt{\frac{1}{2}},\sqrt{\frac{1}{2}},\sqrt{\frac{1}{2}}\right]^* \;\middle|\; \sigma \in \Sym(8) \right\}\\
        & \cup \left\{ \left[0_{E_7},\pm2\sqrt{\frac{1}{2}},\sqrt{\frac{1}{2}},\sqrt{\frac{1}{2}}\right]^* \right\}
    \end{align*}
    and
    \begin{align*}
        \overline{Y_4} = & \left\{ \left[\pm(e_i-e_j),\sqrt{\frac{1}{2}},0,\sqrt{\frac{1}{2}}\right]^* \;\middle|\; 1\le i < j\le 8 \right\}\\
        & \cup \left\{ \left[\sigma\left(\frac{1}{2}, \frac{1}{2}, \frac{1}{2}, \frac{1}{2},\frac{-1}{2},\frac{-1}{2},\frac{-1}{2},\frac{-1}{2}\right),\sqrt{\frac{1}{2}},0,\sqrt{\frac{1}{2}}\right]^* \;\middle|\; \sigma \in \Sym(8) \right\}\\
        & \cup \left\{ \left[\pm\sigma\left(\frac{3}{4}, \frac{3}{4}, \frac{-1}{4}, \dots, \frac{-1}{4}\right),\sqrt{\frac{1}{2}},\pm\sqrt{\frac{1}{2}},\sqrt{\frac{1}{2}}\right]^* \;\middle|\; \sigma \in \Sym(8) \right\}\\
        & \cup \left\{ \left[0_{E_7},\sqrt{\frac{1}{2}},\pm2\sqrt{\frac{1}{2}},\sqrt{\frac{1}{2}}\right]^* \right\}.
    \end{align*}
    Since $\overline{Y_4}$ can be obtained from $\overline{Y_3}$ by swapping the 9th and 10th coordinates, we have $\overline{Y_3} \simeq \overline{Y_4}$.

    Then, let $L_1$ be the lattice generated by $Y_1$ and $r$, and $L_3$ the lattice generated by $Y_3$ and $r$. Furthermore, let $R_1$ be the root sublattice of $L_1$, and $R_3$ the root sublattice of $L_3$. One can verify by computer that $R_1=E_8 \oplus A_1^2$ and $R_3=E_8 \oplus A_1^2$. Thus, all five cliques of size 240 are equivalent.

    Given that $|X_9|=240$ and $A(X_9)= \{ \pm\frac{1}{5}, \pm\frac{3}{5}\}$, $|X|=240$ only when $X\simeq X_9$.
    
\end{proof}


\section{Biangular lines in dimension 10}

The process of the classification for dimension 10 is slightly different, since there is a biangular line system of the largest size that does not contain a special triangle. From \cite{ganzhinov}, the largest known biangular line system in dimension 10 is represented by $X_{10}:= \left\{ \frac{1}{\sqrt{10}} [\pm 1, \dots, \pm 1] \in \R^{10} \;\middle|\; \text{even number of $+$} \right\}$. We have $A(X_{10}) \subseteq \{\pm\frac{1}{5}, \pm\frac{3}{5}\}$ and $|X_{10}|=256$. Furthermore, one can verify that $X_{10} \cup (-X_{10})$ contains no special triangle.

\begin{lem} \label{lem-avg-d10}
    Let $X \subseteq S^9$ be such that $A(X) \subseteq \{ \pm \frac{1}{5}, \pm \frac{3}{5}\}$ and $|X| \ge 256$. Then $\frac{1}{|X|} \Big|\Bigl\{(x,x') \in X^2 \mid (x,x') \in \{\pm \frac{3}{5}\}\Bigr\}\Big| \ge 45$.
\end{lem}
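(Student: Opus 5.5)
The plan is to copy the proofs of \Cref{lem-avg-d7}, \Cref{lem-avg-d8} and \Cref{lem-avg-d9}: combine the counting identity with the degree $2$ Delsarte inequality of \Cref{thm-del} for $n=10$. Let
$$a:=\frac{1}{|X|} \Big|\Bigl\{(x,x') \in X^2 \mid (x,x') \in \{\pm \tfrac{3}{5}\}\Bigr\}\Big|, \qquad b:=\frac{1}{|X|} \Big|\Bigl\{(x,x') \in X^2 \mid (x,x') \in \{\pm \tfrac{1}{5}\}\Bigr\}\Big|.$$
Every ordered pair in $X^2$ is either diagonal or has inner product in $\{\pm\frac{3}{5},\pm\frac{1}{5}\}$. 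Hence $|X^2|=|X|(1+a+b)$, which gives $1+a+b=|X|\ge 256$, that is, $b \ge 255-a$.

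Next I compute $Q^{10}_2$ from the recurrence. With $k=1$ and $n=10$ it reads $\frac{2}{12}Q^{10}_2(\lambda)=10\lambda^2-\frac{8}{8}$, so $Q^{10}_2(\lambda)=6(10\lambda^2-1)$. This is consistent with the pattern $Q^n_2(\lambda)=\frac{n+2}{2}(n\lambda^2-1)$ used in dimension $7$. The relevant values are
$$Q^{10}_2(1)=54,\qquad Q^{10}_2\left(\tfrac{3}{5}\right)=\tfrac{78}{5},\qquad Q^{10}_2\left(\tfrac{1}{5}\right)=-\tfrac{18}{5}.$$
Since $Q^{10}_2$ is even, the terms with inner products $-\frac{3}{5}$ and $-\frac{1}{5}$ contribute the same values as $\frac{3}{5}$ and $\frac{1}{5}$. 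Dividing the sum in \Cref{thm-del} by $|X|$ gives
$$0 \le 54+\tfrac{78}{5}a-\tfrac{18}{5}b,\qquad\text{that is,}\qquad b \le 15+\tfrac{13}{3}a.$$

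Combining the two bounds on $b$ gives $255-a \le 15+\frac{13}{3}a$. This rearranges to $240 \le \frac{16}{3}a$, so $a \ge 45$, which is the claim.

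There is no real obstacle here. The only point needing care is computing $Q^{10}_2$ correctly from the recurrence and checking that the resulting constant is exactly $45$.
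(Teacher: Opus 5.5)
Your proposal is correct and follows the paper's proof exactly: the counting identity $1+a+b=|X|\ge 256$ combined with the degree-$2$ Delsarte inequality $0\le 54+\frac{78}{5}a-\frac{18}{5}b$ for $n=10$, which together give $a\ge 45$. Your computation of $Q^{10}_2(\lambda)=6(10\lambda^2-1)$ from the recurrence and the final arithmetic are all correct.
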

\begin{proof}
    Let $$a:=\frac{1}{|X|} \Big|\Bigl\{(x,x') \in X^2 \mid (x,x') \in \{\pm \frac{3}{5}\}\Bigr\}\Big|,$$ $$b:=\frac{1}{|X|} \Big|\Bigl\{(x,x') \in X^2 \mid (x,x') \in \{\pm \frac{1}{5}\}\Bigr\}\Big|.$$ We have $|X^2|=|X|+a|X|+b|X|$, so $1+a+b=|X|\ge 256$. From \Cref{thm-del} we have 
    \begin{align*}
        0 & \le Q^{10}_2(1)+aQ^{10}_2(\frac{3}{5})+bQ^{10}_2(\frac{1}{5})\\
          & = 54 + \frac{78}{5} a -\frac{18}{5}b.
    \end{align*}
    Combining both inequalities gives $a \ge 45$.
\end{proof}

\begin{prop} \label{prop-fc-d10}
    Let $X \subseteq S^9$ be such that $A(X) \subseteq \{ \pm \frac{1}{5}, \pm \frac{3}{5}\}$, $|X| \ge 256$. Assume that $X \cup (-X)$ contains no special triangle. Then $X \simeq X_{10}$.
\end{prop}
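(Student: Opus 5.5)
We follow the strategy of \Cref{lem-fc-d7}, but now in dimension $10$ with no contradiction available: we instead pin down the structure. By \Cref{lem-avg-d10}, averaging over $x_0 \in X$ gives some $x_0$ with at least $45$ vectors of $X$ at inner product $\pm\frac{3}{5}$ with $x_0$, so $|X_{\frac{3}{5}}(x_0)| \ge 45$. Since $X\cup(-X)$ has no special triangle, $-\frac{1}{5}\notin A(X_{\frac{3}{5}}(x_0))$, and \Cref{lem-z} yields a set of roots $Z$ with $A(Z)\subseteq\{0,1\}$, $|Z|\ge 45$, and $\dim\vspan(Z)=\dim\vspan(X_{\frac35}(x_0),x_0)\le 10$. \Cref{thm-cam} gives $|Z|\le \frac12\cdot 10\cdot 9=45$. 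Hence $|Z|=45$, the span is all of $\R^{10}$, and by \Cref{thm-cam2} $Z$ represents $L(K_{10})$. In particular, equality holds in the averaging, so every $x_0\in X$ has exactly $45$ neighbours at $\pm\frac35$. Then Delsarte's inequality is tight with $a=45$, which forces $|X|=256$ exactly and $b=210$.

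Next we reconstruct coordinates. A set of roots representing $L(K_{10})$ is, up to isometry, $\{e_i+e_j\}_{i<j}$ in $\R^{10}$, because the Gram matrix $2I+A_{L(K_{10})}$ equals $N^TN$ with $N$ the vertex–edge incidence matrix of $K_{10}$, and this matrix is nonsingular of rank $10$. Undoing the construction of \Cref{lem-z} (scale, subtract the last coordinate, add $\frac35 x_0$), we get explicit coordinates for $x_0$ and for the $45$ vectors of $X_{\frac35}(x_0)$. With a suitable orthonormal basis these should be $x_0=\frac{1}{\sqrt{10}}\1$ and $X_{\frac35}(x_0)=\{\frac1{\sqrt{10}}(\1-2e_i-2e_j)\}$, which are exactly the elements of $X_{10}$ with two minus signs. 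I would verify this identification by matching Gram matrices, since $x_0$ must be the unique vector in the span having inner product $\frac35$ with all of them.

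The remaining $210$ lines are those of $X_{-\frac15}(x_0)$. Each such $x$ is determined by its inner products with the basis formed by these $45$ vectors together with $x_0$. Those inner products lie in $\{-\frac35,-\frac15,\frac15\}$, subject to the no-special-triangle constraint: if $(x,x_i)=-\frac35$ and $(x,x_j)=\frac15$, then $(x_i,x_j)\neq\frac35$. Writing $x=\frac{1}{\sqrt{10}}v$ and using $(x,x_0)=-\frac15$, we get $\sum v_k=-2$, and the constraints $v_i+v_j\in\{\ldots\}$ from the $45$ inner products are strong. The hope is that $v$ is forced to have the form $\pm1$ in every coordinate with exactly six $-1$'s, or $\ldots$; the sets with $4$, $8$, or $10$ minus signs come from the $\pm$ choice of representative. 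This step is the main obstacle. First I would try to show directly that $v_i+v_j\in\{-2,0,2\}$ for all $i\ne j$, together with $|v|^2=10$ and $\sum v_k=-2$, forces $v\in\{\pm1\}^{10}$. This works because the pairwise sums force all $v_k$ to be congruent mod $2$ and to take at most two values $t, t'$ with $t+t'\in\{-2,0,2\}$. If that case analysis becomes unwieldy, I would instead fall back on a finite clique computation as in Appendix \ref{magma1}, with $M$ now the fixed Gram matrix of $L(K_{10})$ type.

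Once every vector of $X$ lies, up to sign, in $\frac1{\sqrt{10}}\{\pm1\}^{10}$ and $|X|=256$, we are done. These $1024$ vectors form $512$ lines, and $X$ must choose $256$ of them pairwise at inner products $\pm\frac15,\pm\frac35$, i.e.\ excluding the Hamming distances $0,10$ (trivial), and $\frac{10-2h}{10}\in\{\pm1,\ \ldots\}$, which rules out exactly odd Hamming distances. So $X$ is a set of even-distance sign vectors of size $256$ modulo $\pm$. This is a coset of the even-weight code, hence $X\simeq X_{10}$ after a sign change of one coordinate if necessary.
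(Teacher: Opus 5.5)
Your proof is correct. Its first half coincides with the paper's: choosing $x_0$ with $|X_{\frac35}(x_0)|\ge 45$, then \Cref{lem-z}, \Cref{thm-cam} and \Cref{thm-cam2} to get $L(K_{10})$, and the equality-in-averaging step.

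The second half takes a genuinely different route. The paper fixes a basis $x_0,x_1,\dots,x_9$ and encodes each $x\in X_{-\frac15}(x_0)$ by $a(x)\in\{-\frac15\}\times\{-\frac35,-\frac15,\frac15\}^9$. It then relies on a computer clique search (Appendix \ref{magma1}) showing there is a unique clique of size $210$. You instead coordinatize $\{x_0\}\cup X_{\frac35}(x_0)$ explicitly as $\frac{1}{\sqrt{10}}\1$ together with the sign vectors having two minus signs. You then show by hand that every $x\in X_{-\frac15}(x_0)$ is a sign vector with six minus signs, so $X$ lies inside $X_{10}$ as lines, and $|X|\ge 256$ finishes.

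Your route is computer-free and makes the equivalence $X\simeq X_{10}$ explicit, rather than inferring it from uniqueness of a clique. It also shows that the no-special-triangle hypothesis is only needed to obtain the $L(K_{10})$ structure. For the last step, $(x,\frac{1}{\sqrt{10}}(\1-2e_i-2e_j))\in\{\pm\frac15,\pm\frac35\}$ alone already gives $v_i+v_j\in\{-4,-2,0,2\}$, which is all you use. The paper's route has the advantage of reusing the same code as in dimensions $7$ and $8$, and of not requiring explicit coordinates.

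One justification needs repair. It is not true that pairwise sums in $\{-2,0,2\}$ force at most two values; for example $v=(2,-2,0,\dots,0)$. The conclusion still holds via the parity argument you half-state:
\begin{itemize}
\item For distinct $i,j$ and a third index $k$, $v_i-v_j=(v_i+v_k)-(v_j+v_k)\in 2\Z$, hence $2v_i\in 2\Z$. So all $v_k$ are integers of the same parity.
\item They cannot all be even, since $\sum v_k^2=10\not\equiv 0 \pmod 4$.
\item So all are odd, and $\sum v_k^2=10$ forces $v\in\{\pm1\}^{10}$; then $\sum v_k=-2$ gives exactly six $-1$'s.
\end{itemize}

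Finally, the Delsarte-tightness step giving $|X|=256$ is harmless but unnecessary. The vectors $\1$, the two-minus vectors and the six-minus vectors all have an even number of $+$ signs. So $X$ is contained in $X_{10}$ as lines, which already bounds $|X|$ by $256$.
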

\begin{proof}
    We know that $X_{10}$ satisfies the assumptions of the proposition, so it is enough to show the uniqueness. For $x \in X$ and $i \in \{ \pm\frac{1}{5}, \pm\frac{3}{5} \}$, define $X_{i}(x) := \{ x' \in X \cup (-X) \mid (x,x') = i\}$. From \Cref{lem-avg-d10} we have
    $$\frac{1}{|X|}\sum_{x \in X} \Big|\Bigl\{x' \in X \mid (x,x') \in \{\pm\frac{3}{5}\}\Bigr\}\Big| \ge 45.$$
    Thus
    \begin{equation} \label{eqn3}
        \frac{1}{|X|}\sum_{x \in X} |X_\frac{3}{5}(x)| \ge 45.
    \end{equation}
    Consequently there exists $x_0 \in X$ such that $|X_\frac{3}{5}(x_0)| \ge 45$.

    Define $V:=x_0^\perp$. For $x \in X_\frac{3}{5}(x_0)$, the orthogonal projection $p_V$ onto $V$ is given by $p_V(x) = x-\frac{3}{5}x_0$. For $x, x' \in X_\frac{3}{5}(x_0)$ distinct,
    \begin{align*}
        (p_V(x), p_V(x')) & = (x,x')-\frac{9}{25} \\
         & \in \{ -\frac{4}{25}, \frac{6}{25}\},
    \end{align*}
    and $p_V(X_\frac{3}{5}(x_0)) \subset \frac{4}{5}S^{k-2}$.
    Define $Z(x_0):= \Bigl\{[\sqrt{\frac{5}{2}}x, \sqrt{\frac{2}{5}}] \mid x \in p_V(X_\frac{3}{5}(x_0))\Bigr\}$. Then $Z(x_0)$ is a set of roots, and from \Cref{thm-cam} we have $\dim(\vspan(X_{\frac{3}{5}}(x_0),x_0))=\dim(\vspan(Z(x_0)))=10$, and $|X_\frac{3}{5}(x_0)| = |Z(x_0)| = 45$. Furthermore, from \Cref{thm-cam2} we can then deduce that $Z(x_0)$ represents $L(K_{10})$. Then from \cref{eqn3} we have
    \begin{equation*}
        \frac{1}{|X|}\sum_{x \in X \setminus \{x_0\}} |X_\frac{3}{5}(x)| \ge 45.
    \end{equation*}
    By applying the same reasoning, we can see that for any $x \in X$, we have $|X_\frac{3}{5}(x)| = 45$ and $Z(x)$ represents $L(K_{10})$.
    
    Let $A$ be the adjacency matrix of $L(K_{10})$, and $J_{45}$ be the 45 by 45 all-one matrix. Then, there exists an ordering of the elements of $Z$ such that the Gram matrix of $Z$ is $2I_{45}+A$. Thus, the Gram matrix of $p_V(X_\frac{3}{5}(x_0))$ is $\frac{2}{5}(2I_{45}+A)-\frac{5}{2}J_{45}$. Finally, let $G$ be the Gram matrix of $X_{\frac{3}{5}}(x_0)$, we have
    \begin{equation*} \label{eqn-gram10}
        G = \frac{2}{5}(2I_{45}+A+\frac{1}{2}J_{45}).
    \end{equation*}

    Furthermore since $\dim(\vspan(X_{\frac{3}{5}}(x_0),x_0))=10$, there exist $x_1, \dots, x_9 \in X_\frac{3}{5}(x_0)$ such that $x_0, x_1, \dots, x_9$ is a basis of $\R^{10}$. The vectors of $X_\frac{-1}{5}(x_0)$ can be written in this basis. Let $M$ be the Gram matrix of $x_0, x_1, \dots, x_9$. Then $M$ can be obtained from a 9 by 9 principal submatrix of $G$, by adding a row and column corresponding to $x_0$. Furthermore, for $x\in X$ let $a(x):=[(x_0,x), \dots, (x_9,x)]$. The inner product between two elements $x, x'\in X$ can be written as $(x,x')=a(x)M^{-1}a(x')^T$. Thus for any $x \in X_\frac{-1}{5}(x_0)$, we have $a(x) \in \{-\frac{1}{5}\} \times \{ -\frac{3}{5}, -\frac{1}{5}, \frac{1}{5}\}^9$. Let $A_M := \{ b \in \{-\frac{1}{5}\} \times \{ -\frac{3}{5}, -\frac{1}{5}, \frac{1}{5}\}^9 \mid bM^{-1}b^T = 1\}$. We can define a graph $G_M$ with vertices $A_M$, and $b \sim b'$ when $bM^{-1}b'^T \in \{ \pm\frac{1}{5}, \pm\frac{3}{5}\}$. Then $a(X_\frac{-1}{5}(x_0))$ is a clique in $G_M$. Since $|X|\ge 256$, we have $|X_\frac{-1}{5}(x_0)|=|X|-1-|X_\frac{3}{5}(x_0)| \ge 210$. A computer search shows that the largest size of a clique in $A_M$ is 210, and there is a unique clique of size 210. Thus $a(X_\frac{-1}{5}(x_0))$ is the unique clique of size $210$. The magma code given in Appendix \ref{magma1} can be used for this computation.
\end{proof}

\begin{prop} \label{prop-ne-d10}
    There exists no $X \subseteq S^9$ that verifies $A(X) \subseteq \{ \pm \frac{1}{5}, \pm \frac{3}{5}\}$, $|X| \ge 256$, and containing a special triangle.
\end{prop}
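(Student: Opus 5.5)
We argue by contradiction, following the scheme already used in dimensions 7, 8 and 9. Suppose $X \subseteq S^9$ satisfies $A(X) \subseteq \{\pm\frac{1}{5}, \pm\frac{3}{5}\}$, $|X| \ge 256$, and $X \cup (-X)$ contains a special triangle. Set $Y=\{\phi(x)\mid x\in X\}$ with $\phi(x)=[\sqrt{\frac{5}{2}}x,\sqrt{\frac{1}{2}}]$ and $r=[0_{\R^{10}},\sqrt{2}]$. Let $L:=\langle Y,r\rangle_\Z$ and $R:=\langle S(L)\rangle_\Z$. By \Cref{prop-sw}, $A(Y)\subseteq\{-1,0,1,2\}$ and $(y,r)=1$ for every $y\in Y$. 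From \Cref{lem-min-roots} and \Cref{lem-avg-d10}, together with the parity of $|S(L)\cap r^\perp|$, we get $|S(R)\cap r^\perp|\ge 90$. If $\rank(L)\le 10$, then \Cref{prop-sw} places $X$ in dimension at most $9$, and \Cref{mainthm9} bounds it by $240<256$. Hence $\rank(L)=11$.

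The first main step is to show $\rank(R)=11$, by the argument of \Cref{lem-rank-d9}. Assume $V:=\vspan(L)\cap R^\perp\neq 0$. Since a special triangle exists by hypothesis, \Cref{lem-pvc} shows that $p_V$ is constant up to sign, equal to $\pm P$, on $Y\cup(r-Y)$.
\begin{itemize}
\item If $\rank(R)\le 9$, then $p_R(Y)$ gives a clique in $G(R,r,N(P))$. Each point of $p_R(Y)$ has at most two preimages, so it suffices to check by computer (Appendix \ref{magma2}) that cliques have size $<128$ for all root lattices $R$ of rank at most $9$, all $r\in S(R)$ with $|S(R)\cap r^\perp|\ge 90$, and the finitely many admissible values of $N(P)$.
\item If $\rank(R)=10$, then $\overline{Y}$ is a clique in $G'(R,r,N(P))$. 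We expect a computer search (Appendix \ref{magma3}) to show the clique size is below $256$.
\end{itemize}
If a clique of size $\ge 256$ does appear in the $\rank(R)=10$ case, we rule it out as in dimensions 8 and 9. We use inclusions of root lattices, for instance $D_n\subset E_n$, and the duals from \Cref{lem-Dndual} and \Cref{lem-E7dual}, to write the candidate $Y$ explicitly. We then exhibit $y,y'\in Y\cup(r-Y)$ whose difference, or sum minus $r$, is the root $[0,\dots,0,\sqrt2]$, which contradicts $V\cap R=0$.

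With $\rank(R)=11$, we have $Y\subseteq B(R,r)=\{x\in R^*\mid N(x)=3,(x,r)=1\}$, and $\overline{Y}$ is a clique in $G(R,r)$. We then run the search of Appendix \ref{magma4} over all rank-$11$ root lattices $R$ and roots $r$ with $|S(R)\cap r^\perp|\ge 90$. The bound $|S(R)\cap r^\perp|\ge 90$ removes many candidates, since $R$ must contain many roots orthogonal to $r$; for example $D_{10}\oplus A_1$, $E_8\oplus\cdots$ and $D_9\oplus\cdots$ types survive. The goal is to show that every clique has size $<256$. Alternatively, if the cliques of size $256$ that do appear contain no special triangle, they contradict the hypothesis. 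For instance, $X_{10}$ arises from $D_{10}$-type lattices via $(\frac12+\Z)^{10}\subset D_{10}^*$, and it contains no special triangle. We expect this to be checked by exhibiting the clique and verifying directly that no triple has Gram entries $2,2,0$.

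The main obstacle is computational. In rank $10$–$11$, lattices such as $D_{10}^*$ and $E_8^*\oplus\cdots$ have many norm-$3$ dual vectors with $(x,r)=1$, so the clique search is large. The point to check carefully is that every maximum clique found either has size $<256$ or is triangle-free, the latter being equivalent to $X_{10}$ by \Cref{prop-fc-d10}. If the raw search is too slow, we would first use the symmetry group of $(R,r)$ to fix one vertex of the special triangle, or all three of them, which is permitted since a special triangle is assumed, and only then search for cliques extending it.
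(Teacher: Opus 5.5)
Your outline is the paper's proof. It has the same setup and the same bound $|S(R)\cap r^\perp|\ge 90$. It uses the same case split on $\rank(R)$, with $\rank(L)=11$, which you rightly make explicit via \Cref{mainthm9}. It runs the same clique searches. In the top-rank case the paper, like you, discards the $256$-cliques because they contain no special triangle; it finds these cliques for $D_{10}\oplus A_1$, $D_8\oplus A_1^3$ and $D_7\oplus A_3\oplus A_1$.

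The step that fails as written is the case $\rank(R)=10$. Cliques of size $256$ do appear there. Take $R=D_9\oplus A_1\oplus 0$ with $r$ the $A_1$ root; this is forced, since a root of $D_9$ is orthogonal to only $88$ roots of $R$. Take also $N(P)=\frac14$. Then the vectors $[u,\sqrt{\frac12},v]$ with $u\in\{\pm\frac12\}^9$, $v=\pm\frac12$ and $[2u,2v]$ of fixed parity give two cliques of size $256$. This is just $X_{10}$ with its last coordinate placed in $V$.

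Your fallback cannot be run on these cliques. Every $y-y'$ and every $y+y'-r$ has $V$-component in $\{0,\pm1\}$, so the root $[0,\dots,0,\sqrt2]$ is never produced. In dimensions 7--9 it was produced only because the $V$-components there were $\pm\sqrt{\frac12}$.

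The paper handles this case exactly as it handles rank $11$: a search through the Gram matrices shows the two sets contain no special triangle, which contradicts the hypothesis. If you want a root argument instead, use a different root. Choose $[2u,2v]$ and $[2u',2v']$ in the clique that differ exactly in one coordinate $i\le 9$ and in the last coordinate. Then $(y,y')=2$ and $y-y'=[\pm e_i,0,\pm1]$, which is a root of $L$ with nonzero projection onto $V$. This contradicts $R\perp V$, i.e.\ the conclusion of \Cref{lem-pvc}, rather than $V\cap R=0$.
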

\begin{proof}
Assume that such $X$ exists and define $Y:=\Bigl\{[\sqrt{\frac{5}{2}}x, \sqrt{\frac{1}{2}}] \mid x \in X\Bigr\}$ and $r=[0_{\mathbb{R}^d}, \sqrt{2}]$. Then there exist $y_1, y_2, y_3 \in Y \cup (r-Y)$ such that $(y_1,y_2)=(y_2,y_3)=2$ and $(y_1,y_3)=0$. Define $L:=\langle Y,r \rangle_\Z$, $R:=\langle S(L) \rangle_\Z$, and $V:=\vspan(L) \cap R^\perp$. Then from \Cref{lem-pvc}, the orthogonal projection $p_V$ onto $V$ is constant up to sign on $Y\cup(r-Y)$. Let us denote $p_V(Y\cup(r-Y))=:\{\pm P\}$. Furthermore, from \Cref{lem-min-roots} and \Cref{lem-avg-d10}, we have $|S(R)\cap r^\perp| \ge 90$.

First consider the case where $\rank(L) > \rank(R) + 1$. We have $$p_R(Y\cup(r-Y)) \subseteq \{ x \in R^* \mid N(x) = 3-N(P), (x,r)=1 \}.$$ A similar computer search to the one in lower dimensions, through root lattices of rank at most 9, shows that $|p_R(Y\cup(r-Y))| < 256$. Thus $|Y|=\frac{1}{2}|Y\cup(r-Y)|\le|p_R(Y\cup(r-Y))|< 256$, which contradicts the assumption. The algorithm in Appendix \ref{magma2} can be used for this computer search.

Next we consider the case where $\rank(L) = \rank(R) + 1$. We have $$Y\cup(r-Y) \subseteq \left\{ [x,\pm\sqrt{N(P)}] \;\middle|\; x \in R^*, N(x) = 3-N(P), (x,r)=1 \right\}.$$ A similar computer search to the one in lower dimensions, through root lattices of rank 10, shows that $|Y| \le 256$, and there are two sets of size $256$ for $R=D_9 \oplus A_1 \oplus 0$. However, a search through the Gram matrices shows that there is no special triangle. The algorithm in Appendix \ref{magma3} can be used for this computer search.

Now we consider the case where $\rank(L) = \rank(R)$. We have $$Y\cup(r-Y) \subseteq \left\{ x \in R^* \;\middle|\; N(x) = 3, (x,r)=1 \right\}.$$ A similar computer search to the one in lower dimensions, through root lattices of rank 11, shows that $|Y| \le 256$, and there are 6 sets of size $256$. Two are for $R=D_{10} \oplus A_1$, two for $R=D_8 \oplus A_1^3$, and two for $R=D_7 \oplus A_3 \oplus A_1$. However, a search through the Gram matrices shows that there is no special triangle. The algorithm in Appendix \ref{magma4} can be used for this computer search.
\end{proof}

The computer search also shows that there is a system of 242 lines, containing a special triangle. Indeed, consider $r:=[0_{D_8 \oplus A_1^{2}},\sqrt{2}]$ and $A:= \left\{ x \in (D_8 \oplus A_1^3)^* \;\middle|\; N(x) = 3, (x,r)=1\right\}$, and consider as before the graph $G$ with vertices $A$, and edges between $x$ and $y$ when $(x,y) \in \{-1,0,1,2\}$. Then $G$ has two kinds of maximal cliques (in the sense that they are not contained in a larger clique), some of size 256 that correspond to $X_{10}$, and some of size 242.


\section{Biangular line systems in dimension 15}

In dimension higher than 10, we do not have a complete classification. However, by looking at strong maximality of known biangular line systems, we were able to find a new biangular line system in dimension 15 that ties the size of the system in \cite{ganzhinov}. It is constructed from the largest equiangular line system in dimension 7.

\begin{dfn}
    A set $X \subset S^{d-1}$ represents an equiangular line system if $A(X) \subseteq \{ \pm\alpha\}$ for some $0 \le \alpha < 1$.
\end{dfn}

The question of the largest possible equiangular line system in each dimension has been solved in low dimensions. In particular, there is a system of 28 lines that is the largest equiangular line system in dimension 7 up to 14 (see \cite{lemmens}). It is represented by $$
    \E_7 := \left\{ \frac{1}{\sqrt{24}} \sigma(-3,-3,1,1,1,1,1,1) \;\middle|\; \sigma \in \Sym(8) \right\}
$$
where for a vector $[x_1, \dots, x_d]$ and a permutation $\sigma$, $\sigma(x_1,\dots,x_d):=[x_{\sigma(1)}, \dots, x_{\sigma(d)}]$. Note that $A(\E_7) = \{ \pm\frac{1}{3}\}$ and $\E_7 \subset \R^8 \cap \1^\perp \simeq \R^7$.

When searching for largest possible biangular line system, we usually find systems that are maximal, meaning that they can not be extended with more lines. However, by embedding them in one dimension higher, it is usually possible to add more lines (at the cost of one dimension).

Consider a set $X \subset S^{d-1}$ that verifies $A(X) \subseteq \{ \pm\frac{p}{q}, \pm\frac{p'}{q'}\} \subset \Q$, and denote $\lambda :=\sqrt{\lcm(q,q')}$. Then $X \oplus 0$ represents a biangular line system in dimension $d+1$ with the same inner products as $X$. Furthermore, we have $A(\lambda X) \subset \Z$, so $\lambda X$ is contained in the integral lattice $L:= \langle \lambda X \rangle_\Z$. Define $$
Z(X,\lambda):=\left\{ \frac{1}{\lambda}[z,\pm\sqrt{\lambda^2-N(z)}] \mid z \in L^*, N(z) \le \lambda^2 \text{ and } \forall y \in \lambda X, (z,y) \in A(\lambda X) \right\}.
$$
Then $X \oplus 0$ can only be extended by vectors of $Z(X,\lambda)$ to produce a larger biangular line system. Moreover, since $Z(X,\lambda)$ is contained in the intersection of a lattice and a ball of radius $\lambda$, $Z(X,\lambda)$ is finite. Define the graph $G$ with vertices $Z(X,\lambda)$ and edges between $z_1$ and $z_2$ whenever $(z_1,z_2) \in A(X)$. Then the only sets representing biangular line systems in dimension $d+1$ containing $X \oplus 0$ are $(X \oplus 0) \cup C$ where $C$ is a clique of $G$.

With this method, if $X\subset S^{d-1}$ represents a biangular line system and $A(X) \subset \Q$, then we can check by computer what the largest biangular line system in dimension $d+1$ containing $X \oplus 0$ is. That is how the following biangular line system was found.

Let $e_1, \dots, e_7$ be the standard basis of $\R^7$, and define $$
    B_7 := \left\{ \pm e_i \;\middle|\; i \in \{1,\dots,7\} \right\},
$$
$$
    C_7 := \left\{ [\pm1,\dots,\pm1] \in \R^7 \;\middle|\; \text{even number of $+$} \right\}
$$
and $$
    X_{15} := \left( \sqrt{\frac{2}{5}} B_7 \times \sqrt{\frac{3}{5}} \E_7 \oplus 0 \right) \cup \left( \sqrt{\frac{1}{10}}C_7 \times \{0_{\R^8}\} \times \{ \sqrt{\frac{3}{10}}\}\right).
$$
\begin{prop}
    The set $X_{15}$ is contained in $S^{14}$ and represents $456$ biangular lines with $A(X_{15}) \subseteq \{\pm\frac{1}{5}, \pm\frac{3}{5}\}$.
\end{prop}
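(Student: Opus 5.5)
This is a direct verification, organised by the two blocks of $X_{15}$. Write $U:=\sqrt{\frac{2}{5}} B_7 \times \sqrt{\frac{3}{5}} \E_7 \times\{0\}$ and $W:=\sqrt{\frac{1}{10}}C_7 \times \{0_{\R^8}\} \times \{ \sqrt{\frac{3}{10}}\}$. The vectors live in $\R^{7}\times\R^8\times\R=\R^{16}$. However, the $\E_7$ block lies in $\1^\perp\cap\R^8\simeq\R^7$, so everything lies in a $15$-dimensional subspace, which we identify with $\R^{15}$.

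\textbf{Norms and counting.}
\begin{itemize}
\item For $u\in U$: $N(u)=\frac{2}{5}\cdot 1+\frac{3}{5}\cdot 1=1$, since each $\pm e_i$ and each element of $\E_7$ is a unit vector.
\item For $w\in W$: $N(w)=\frac{7}{10}+\frac{3}{10}=1$.
\end{itemize}
For the line count, $|B_7\times\E_7|=14\cdot 28=392$, but $(b,e)$ and $(-b,-e)$ represent the same line, so $U$ gives $196$ lines. Next, $|C_7|=2^6=64$. Since $7$ is odd, negation flips the parity of the number of $+$ signs, so $C_7\cap(-C_7)=\emptyset$, and $W$ gives $64$ lines. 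We must also check that no vector of $U$ is parallel to one of $W$; this follows from their distinct supports.

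This yields $196+64=260$, not $456$. So I would first recheck the intended reading of the definition. Most likely $\E_7$ should be read as the full set of $56$ vectors $\pm\frac{1}{\sqrt{24}}\sigma(-3,-3,1,\dots,1)$, i.e.\ both signs. Then $|B_7\times\E_7|=14\cdot 56$, and modulo $\pm$ this gives $392$ lines, so $392+64=456$. Alternatively, $B_7$ should be counted with lines $e_i$ only. In either case the product counts $7\cdot 56=392$ lines, and the total is $456$. I will adopt this reading, with $\E_7$ symmetric under negation, and verify the count of distinct lines as above.

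\textbf{Inner products within $U$.} For $u=[\sqrt{2/5}\,b,\sqrt{3/5}\,e,0]$ and $u'=[\sqrt{2/5}\,b',\sqrt{3/5}\,e',0]$ we have $(u,u')=\frac{2}{5}(b,b')+\frac{3}{5}(e,e')$, where $(b,b')\in\{0,\pm1\}$ and $(e,e')\in\{\pm1,\pm\frac13\}$. The possible values are:
\begin{itemize}
\item $b=\pm b'$ and $e\ne\pm e'$: the value is $\pm\frac{2}{5}\pm\frac{1}{5}\in\{\pm\frac15,\pm\frac35\}$.
\item $b\perp b'$ and $e=\pm e'$: the value is $\pm\frac35$.
\item $b\perp b'$ and $e\neq\pm e'$: the value is $\pm\frac15$.
\item $b=\pm b'$ and $e=\pm e'$: either $u=\pm u'$, which is the same line, or the signs disagree and the value is $\pm\frac{2}{5}\mp\frac35=\mp\frac15$.
\end{itemize}

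\textbf{Inner products within $W$.} For $c\neq\pm c'$ in $C_7$, the vectors agree in $7-k$ coordinates and differ in $k$ coordinates, where $k$ is even and $k\in\{2,4,6\}$ (even because both have the same parity; $k=7$ is excluded since $C_7\cap(-C_7)=\emptyset$). Then $(w,w')=\frac{1}{10}(7-2k)+\frac{3}{10}$, which takes the values $\frac{6}{10},\frac{2}{10},-\frac{2}{10}$, i.e.\ $\frac35,\frac15,-\frac15$.

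\textbf{Cross inner products.} For $u\in U$ and $w\in W$, only the first block overlaps, since $W$ is zero on the $\E_7$ block and $U$ is zero in the last coordinate. Hence $(u,w)=\sqrt{\frac{2}{5}}\sqrt{\frac{1}{10}}\,(\pm e_i,c)=\pm\frac{1}{5}$.

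This cross computation is the one step that needs care, namely confirming the disjoint supports. The main obstacle is really the bookkeeping of the count $456$ and of which sign conventions for $\E_7$ and $B_7$ are intended, not the inner products themselves.
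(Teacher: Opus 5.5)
Your norm and subspace checks and your three inner-product computations (within the product block, within the $C_7$ block, and across blocks) are correct. They are the same verification the paper carries out. The gap is the line count. You assert that $[\sqrt{2/5}\,b,\sqrt{3/5}\,e,0]$ and $[-\sqrt{2/5}\,b,-\sqrt{3/5}\,e,0]$ both lie in $U$. But $\E_7$ as defined is not closed under negation. For $e\in\E_7$, the vector $-e$ has two entries $3/\sqrt{24}$ and six entries $-1/\sqrt{24}$, so $-e\notin\E_7$ and $\E_7\cap(-\E_7)=\emptyset$. Hence no two elements of $U$ are antipodal, and $U$ contributes $14\cdot 28=392$ distinct lines, not $196$. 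Nothing is wrong with the definition: the count $|B_7||\E_7|+|C_7|=14\cdot 28+64=456$ is exactly the paper's.

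Your proposed repair changes the object and breaks the statement. If $\E_7$ is replaced by the $56$ vectors $\pm\frac{1}{\sqrt{24}}\sigma(-3,-3,1,\dots,1)$, then $X_{15}$ contains antipodal pairs $u,-u$. So $-1\in A(X_{15})$, and the claimed inclusion $A(X_{15})\subseteq\{\pm\frac15,\pm\frac35\}$ is false for that set. Your case ``$u=\pm u'$, which is the same line'' is exactly where this shows up. The alternative of taking only $e_i$ in $B_7$ with the $28$-element $\E_7$ would give $196$ lines, so your ``in either case $7\cdot 56$'' remark is also inconsistent.

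The clean count, as in the paper, uses your own computations with the definition as written. With $\E_7\cap(-\E_7)=\emptyset$ the case $e=-e'$ never occurs, so every inner product of two distinct elements of $X_{15}$ lies in $\{\pm\frac15,\pm\frac35\}$. In particular $\pm1\notin A(X_{15})$, so distinct vectors span distinct lines, and the number of lines is $|X_{15}|=456$. With that correction your argument is complete.
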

\begin{proof}
    Let $\1_8$ be the all-one vector in $\R^8$. The set $X_{15}$ is contained in $S^{15} \cap [0_{\R^7}, \1_8, 0]^\perp \simeq S^{14}$. Furthermore, $|X_{15}| = |B_7||E_7|+|C_7|=14 \cdot 28+64=456$. Finally,
    $$
        A(X_{15}) = A\left(\sqrt{\frac{2}{5}} B_7 \times \sqrt{\frac{3}{5}} \E_7\right) \cup A\left( \sqrt{\frac{1}{10}}C_7\times \{ \sqrt{\frac{3}{10}}\}\right) \cup \left\{ (x,y) \;\middle|\; x \in \sqrt{\frac{2}{5}} B_7, y \in \sqrt{\frac{1}{10}}C_7 \right\}
    $$
    where
    \begin{align*}
        A\left(\sqrt{\frac{2}{5}} B_7 \times \sqrt{\frac{3}{5}} \E_7\right) &= \left\{ \frac{2}{5}a+\frac{3}{5}b \;\middle|\; a \in A(B_7) \cup \{1\}, b \in A(\E_7) \cup \{1\}\right\}\\
        &= \left\{ \frac{2}{5}a+\frac{3}{5}b \;\middle|\; a \in \{ -1,0,1\}, b \in \{ \pm\frac{1}{3}, 1\}, [a,b] \ne [1,1]\right\}\\
        &=\left\{ \pm\frac{1}{5}, \pm\frac{3}{5}\right\},
    \end{align*}
    \begin{align*}
        A\left( \sqrt{\frac{1}{10}}C_7\times \{ \sqrt{\frac{3}{10}}\}\right) &= \frac{1}{10}A(C_7)+\frac{3}{10}\\
        &= \frac{1}{10}\left\{-5, -1, 3\right\} + \frac{3}{10}\\
        &= \left\{ \pm\frac{1}{5}, \frac{3}{5}\right\}
    \end{align*}
    and
    \begin{align*}
        \left\{ (x,y) \;\middle|\; x \in \sqrt{\frac{2}{5}} B_7, y \in \sqrt{\frac{1}{10}}C_7 \right\} &= \frac{1}{5} \left\{ (x,y) \;\middle|\; x \in B_7, y \in C_7 \right\}\\
        &= \left\{ \pm\frac{1}{5} \right\}.
    \end{align*}
\end{proof}

The biangular line system of size $456$ from \cite{ganzhinov} is described as follows. The largest equiangular line system in dimension $5$ has 10 lines, and inner product $\frac{1}{3}$. Let $\E_5$ be such that $|\E_5|=10$ and $A(\E_5)=\{\pm\frac{1}{3}\}$. Then $\E_5$ represents the line graph of the complete graph on $5$ vertices (see \cite{cao}), and the Gram matrix of $\E_5$ can be computed from the adjacency matrix of this graph. Define $$
    B_{10} := \left\{ \pm e_i \;\middle|\; i \in \{1,\dots,10\} \right\}
$$
where $e_1, \dots, e_{10}$ is the standard basis of $\R^{10}$, and $$
    C_{10} := \left\{ [1, \pm 1 \dots, \pm 1] \in \R^{10} \;\middle|\; \text{even number of $-$} \right\}.
$$
Finally, define $$
    X_{15}' := \left(\sqrt{\frac{2}{5}}B_{10} \times \sqrt{\frac{3}{5}}\E_5 \right) \cup \left( \sqrt{\frac{1}{10}}C_{10} \oplus 0 \right).
$$
\begin{prop}[\cite{ganzhinov}]
    The set $X_{15}'$ is contained in $S^{14}$ and represents $456$ biangular lines with $A(X_{15}') \subseteq \{\pm\frac{1}{5}, \pm\frac{3}{5}\}$.
\end{prop}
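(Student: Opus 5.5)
The proof follows the same pattern as the verification for $X_{15}$: check the norms and the ambient dimension, count the vectors, and then compute the set of inner products by splitting $A(X_{15}')$ into the two "diagonal" blocks and the cross block. Here $\E_5$ sits in $\R^5$, realised from its Gram matrix; this is possible because that Gram matrix, computed from the adjacency matrix of $L(K_5)$ as in \cite{cao}, is positive semidefinite of rank $5$. So the first part $\sqrt{\frac{2}{5}}B_{10}\times\sqrt{\frac{3}{5}}\E_5$ lives in $\R^{10}\times\R^5=\R^{15}$. The second part $\sqrt{\frac{1}{10}}C_{10}\oplus 0$ is the same vectors padded with five zero coordinates. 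Hence $X_{15}'\subset\R^{15}$.

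\textbf{Norms and size.} A vector $[\sqrt{\frac{2}{5}}b,\sqrt{\frac{3}{5}}e]$ with $b\in B_{10}$ and $e\in\E_5$ has norm $\frac{2}{5}+\frac{3}{5}=1$. A vector $\sqrt{\frac{1}{10}}c$ with $c\in C_{10}$ has norm $\frac{10}{10}=1$. So $X_{15}'\subset S^{14}$. For the count, $|B_{10}||\E_5|=20\cdot 10=200$. In $C_{10}$ the first coordinate is fixed to $1$ and the remaining nine signs have an even number of $-$, giving $2^8=256$ vectors. Thus $|X_{15}'|=456$. Once we know that no inner product between distinct vectors equals $\pm1$, no two vectors represent the same line, so this gives $456$ lines.

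\textbf{First block.} Inner products in the first block are
$$\frac{2}{5}a+\frac{3}{5}b,\qquad a\in A(B_{10})\cup\{1\}=\{-1,0,1\},\quad b\in A(\E_5)\cup\{1\}=\{\pm\tfrac{1}{3},1\},\quad [a,b]\neq[1,1].$$
Running through the eight admissible pairs gives
$$\left\{\tfrac{3}{5},\tfrac{1}{5},\pm\tfrac{1}{5},\pm\tfrac{3}{5},\tfrac{3}{5},-\tfrac{1}{5},-\tfrac{3}{5},\tfrac{1}{5}\right\}=\left\{\pm\tfrac{1}{5},\pm\tfrac{3}{5}\right\}.$$
In particular $\pm1$ does not occur. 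This is exactly the computation already done for $X_{15}$.

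\textbf{Second block.} Two distinct elements of $C_{10}$ share the first coordinate. Both have an even number of $-$ signs among the last nine coordinates, so they differ in an even number $k\in\{2,4,6,8\}$ of coordinates. Their inner product is $10-2k$, so the scaled inner products are
$$\frac{10-2k}{10}\in\left\{\tfrac{3}{5},\tfrac{1}{5},-\tfrac{1}{5},-\tfrac{3}{5}\right\}.$$
The fact that the first coordinate is fixed, forcing $k\le 8$, is precisely what excludes $-1$.

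\textbf{Cross block.} The $\E_5$ component of a first-block vector meets the zero padding of a second-block vector, so only the $\R^{10}$ part contributes. The inner product is $\sqrt{\frac{2}{5}}\sqrt{\frac{1}{10}}(\pm e_i,c)=\frac{1}{5}(\pm c_i)=\pm\frac{1}{5}$.

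Combining the three blocks gives $A(X_{15}')\subseteq\{\pm\frac{1}{5},\pm\frac{3}{5}\}$. The only step needing any care is the existence of $\E_5$ in $\R^5$ with the stated Gram matrix, which we take from the cited equiangular result. Everything else is direct arithmetic.
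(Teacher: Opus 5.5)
Your proof is correct and is the same block-by-block verification the paper carries out for $X_{15}$: norms, the count $200+256=456$, then the two diagonal blocks and the cross block, with $A(B_{10})\cup\{1\}$ and $A(\E_5)\cup\{1\}$ playing the roles of $B_7$ and $\E_7$. The paper gives no written proof for $X_{15}'$ itself (it cites \cite{ganzhinov} and only encodes these block formulas in the Gram matrix of Appendix \ref{magma5}), and your rank claim checks out, since the Gram matrix of $\E_5$ has eigenvalues $2$ and $0$, each with multiplicity $5$.
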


One can verify that $X_{15}$ and $X_{15}'$ are not equivalent by computing
$$
    \left|\left\{\{x,y\} \subset X_{15} \;\middle|\; (x,y) = \pm\frac{3}{5} \right\}\right|=8316
$$ and $$
    \left|\left\{\{x,y\} \subset X_{15}' \;\middle|\; (x,y) = \pm\frac{3}{5}\right\}\right|=8460.
$$
A magma code for this computation is given in Appendix \ref{magma5}.


\section{Conclusion}

The strategy used to classify biangular line systems with inner products $\pm\frac{1}{5}, \pm\frac{3}{5}$ in dimension $7$ through $10$ can most likely be extended to higher dimension. We did not manage to complete the classification in dimension 11 because the equivalent on \Cref{lem-avg-d10} for dimension 11 gives a lower bound that is too small. However, with more work it should be possible.

\section{Acknowledgment}

I am grateful to Prof. Akihiro Munemasa for his continuous guidance through this study.

This work was supported by JST SPRING (Grant Number JPMJSP2114).

\begin{appendices}


\section{Magma code: \Cref{lem-fc-d7}, \Cref{lem-fc-d8} and \Cref{prop-fc-d10}} \label{magma1}

The following code can be used for the computations in dimension 7, 8 and 10 by changing the value of \verb+dim+ to the dimension. To run this code, it is necessary to install the optional package containing the database of small graphs. The expected output of the algorithm is given in the following table.
\begin{table}[H]
\begin{center}
\normalfont
\begin{NiceTabular}{|c|c|c|c|}
    \hline
     \verb+dim+ & 7 & 8 & 10\\
     \hline
     Size & 41 & 78 & 210\\
     \hline
\end{NiceTabular}
\end{center}
\end{table}

\begin{verbatim}
dim:=7;

maxsize:=[];
maxsize[7]:=41;
maxsize[8]:=78;
maxsize[10]:=210;

Q:=Rationals();
s:=dim-1;
Gr:=SmallGraphDatabase(s:IncludeDisconnected:=true);
maxm:=0;
u:=Matrix([[Q!1]]);
v:=Matrix([[3/5 : _ in [1..s]]]);
I:=ScalarMatrix(s, 1);
J:=Matrix([[1 : _ in [1..s]] : _ in [1..s]]);
for G in Gr do
 M:=VerticalJoin(
  HorizontalJoin(u, v),
  HorizontalJoin(Transpose(v), 1/5*J+4/5*I+2/5*AdjacencyMatrix(G))
 );
 if not IsPositiveDefinite(M) then continue G; end if;
 Mi:=M^-1;
 X:=[];
 for a in CartesianPower({-3/5, -1/5, 1/5},s) do
  // Remove special triangles
  for i in [1..s] do
   for j in [1..s] do
    if a[i] eq -3/5 and a[j] eq 1/5 and M[i+1,j+1] eq 3/5 then
     continue a;
    end if;
   end for;
  end for;
  // Check norm 1
  A:=Vector([-1/5] cat [a[i] : i in [1..#a]]);
  if (A*Mi,A) eq 1 then Append(~X,A); end if;
 end for;
 Gg:=Graph<#X | {{i,j} : i in [1..#X], j in [1..#X]
  | i lt j and (X[i]*Mi,X[j]) in {-3/5,-1/5,1/5,3/5}}>;
 maxm:=Max(maxm,#MaximumClique(Gg));
end for;
maxsize[dim] eq maxm;
\end{verbatim}


\section{Magma code: \Cref{lem-rank-d7}, \Cref{lem-rank-d8}, \Cref{lem-rank-d9} and \Cref{prop-ne-d10}} \label{magma2}
The following code can be used for the computations in dimension 7 through 10 by changing the value of \verb+dim+ to the dimension. The lower bounds on $|S(L) \cap r^\perp|$ are given in the following table.
\begin{table}[H]
\begin{center}
\normalfont
\begin{NiceTabular}{|c|c|c|c|c|}
    \hline
     \verb+dim+ & 7 & 8 & 9 & 10\\
     \hline
     bound & 42 & 62 & 102 & 90\\
     \hline
\end{NiceTabular}
\end{center}
\end{table}

\begin{verbatim}
dim:=7; // Dimension

// Size of the largest known biangular line system
maxsize:=[]; 
maxsize[7]:=72;
maxsize[8]:=126;
maxsize[9]:=240;
maxsize[10]:=256;
// Lower bound on the number of roots of L orthogonal to r
minRoots:=[];
minRoots[7]:=42;
minRoots[8]:=62;
minRoots[9]:=102;
minRoots[10]:=90;

// Compute the list of irreducible root lattices of rank at most n
function irreducibleRL(n)
 return [* i le 3 select 
  [*[*"A" cat IntegerToString(i),Lattice("A",i)*]*]
 else
  i in {6,7,8} select
   [*[*"A" cat IntegerToString(i),Lattice("A",i)*],
   [*"D" cat IntegerToString(i),Lattice("D",i)*],
   [*"E" cat IntegerToString(i),Lattice("E",i)*]*]
  else 
    [*[*"A" cat IntegerToString(i),Lattice("A",i)*],
    [*"D" cat IntegerToString(i),Lattice("D",i)*]*]
  : i in [1..n] *];
end function;

// Compute the list of root lattices of rank n,
// with representatives of roots for each lattice
function rootL (n) 
 LR:=irreducibleRL(n);
 Lats:=[* *];
 for pt in Partitions(n) do
  for li in CartesianProduct([{1..#LR[pt[i]]} : i in [1..#pt]]) do
    // Eliminate duplicates, e.g. A4+D4 and D4+A4
   if &or[ pt[i] eq pt[i+1] and li[i] lt li[i+1] : i in [1..#pt-1]] then
    continue li;
   end if;
   LatList:=[LR[pt[i]][li[i]] : i in [1..#li]];
   Latt:=DirectSum([pair[2] : pair in LatList]);
   zeros:=[*Zero(LatList[i][2]) : i in [1..#LatList]*];
   roots:=[*ShortestVectors(LatList[i][2])[1] : i in [1..#LatList]*];
   rootsMat:=[[*i eq j select roots[i] else zeros[i] : i in [1..#LatList]*]
    : j in [1..#LatList] | not LatList[j][1] in {LatList[i][1] : i in [1..j-1]}];
   rootsRep:=[Coordelt(Latt,&cat([Coordinates(x) : x in row])) : row in rootsMat];
   namesSet:={* pair[1] : pair in LatList *};
   namesList:=[ Multiplicity(namesSet,comp) eq 1 select comp
    else comp cat "^" cat IntegerToString(Multiplicity(namesSet,comp))
     : comp in Set(namesSet)];
   name:= #namesList eq 1 select namesList[1] else &cat[namesList[i] cat "+"
    : i in [1..#namesList-1]] cat namesList[#namesList];
   Append(~Lats,[*name, Latt, rootsRep*]);
  end for;
 end for;
 return Lats;
end function;

// Operations for Rank(R) < dim
for Latt in &cat[rootL(i) : i in [2..dim-1]] do
 name:=Latt[1]; R:=Latt[2]; rootsRep:=Latt[3];
 R2:= S cat [-s : s in S] where S is [pair[1] : pair in ShortVectors(R,2,2)];
 Rs:= Dual(R: Rescale:=false);
 Rsl3:= S cat [-s : s in S] where S is [pair[1]
  : pair in ShortVectors(Rs,3) | Norm(pair[1]) ne 3];
 for r in rootsRep do
  if #[x : x in R2 | (x,r) eq 0] lt minRoots[dim] then
   continue r;
  end if;
  A:=[x : x in Rsl3 | (x,r) eq 1];
  Na:={Norm(x) : x in A};
  for a in Na do
   Aa:=[x : x in A | Norm(x) eq a];
   Ia:={-3+a, -2+a, -1+a, 2-a, 3-a, 4-a};
   Gp:=Graph<#Aa | {{i,j} : i in [1..#Aa], j in [1..#Aa]
    | i lt j and (Aa[i],Aa[j]) eq 1-a}>;
   Ba:=[ Aa[Index(Random(c))] : c in Components(Gp)];
   G:=Graph<#Ba | {{i,j} : i in [1..#Ba], j in [1..#Ba]
    | i lt j and (Ba[i],Ba[j]) in Ia}>;
   // Check degree of vertices before computing maximal clique
   ds:=DegreeSequence(G);
   if #ds lt maxsize[dim]/2 or not &+[ds[i] : i in [maxsize[dim]/2..#ds]]
    ge maxsize[dim]/2 then
    continue a;
   end if;
   mc:=#MaximumClique(G);
   if mc ge maxsize[dim]/2 then
    printf "R=%o, there are %o maximum clique of size %o\n",
    name,#AllCliques(G,mc),mc;
   end if;
  end for;
 end for;
end for;
\end{verbatim}


\section{Magma code: \Cref{lem-rank-d7}, \Cref{lem-rank-d8}, \Cref{lem-rank-d9} and \Cref{prop-ne-d10}} \label{magma3}
The following code can be used for the computations in dimension 7 through 10. The code of Appendix \ref{magma2} should be run before this one.
\begin{verbatim}

// Operations for Rank(R) = dim
for Latt in rootL(dim) do
 name:=Latt[1]; R:=Latt[2]; rootsRep:=Latt[3];
 R2:= S cat [-s : s in S] where S is [pair[1] : pair in ShortVectors(R,2,2)];
 Rs:= Dual(R: Rescale:=false);
 Rsl3:= S cat [-s : s in S] where S is [pair[1]
  : pair in ShortVectors(Rs,3) | Norm(pair[1]) ne 3];
 for r in rootsRep do
  if #[x : x in R2 | (x,r) eq 0] lt minRoots[dim] then
   continue r;
  end if;
  A:=[x : x in Rsl3 | (x,r) eq 1];
  Na:={Norm(x) : x in A};
  for a in Na do
   Aa:=[x : x in A | Norm(x) eq a];
   Ba:=[[*x,s*] : x in Aa, s in {+1, -1}];
   Gp:=Graph<#Ba | {{i,j} : i in [1..#Ba], j in [1..#Ba]
    | i lt j and (Ba[i][1],Ba[j][1])+(3-a)*Ba[i][2]*Ba[j][2] eq -2}>;
   Ca:=[ Ba[Index(Random(c))] : c in Components(Gp)];
   G:=Graph<#Ca | {{i,j} : i in [1..#Ca], j in [1..#Ca]
    | i lt j and (Ca[i][1],Ca[j][1])+(3-a)*Ca[i][2]*Ca[j][2] in {-1,0,1,2}}>;
   // Check degree of vertices before computing maximal clique
   ds:=DegreeSequence(G);
   if #ds lt maxsize[dim] or not &+[ds[i] : i in [maxsize[dim]..#ds]]
    ge maxsize[dim] then
    continue a;
   end if;
   mc:=#MaximumClique(G);
   if mc ge maxsize[dim] then
    printf "R=%o, there are %o maximum clique of size %o\n",
    name,#AllCliques(G,mc),mc;
   end if;
  end for;
 end for;
end for;
\end{verbatim}


\section{Magma code: \Cref{mainthm7}, \Cref{mainthm8}, \Cref{mainthm9} and \Cref{prop-ne-d10}} \label{magma4}
The following code can be used for the computations in dimension 7 through 10. The code of Appendix \ref{magma2} should be run before this one.
\begin{verbatim}

// Operations for Rank(R)=dim+1
for Latt in rootL(dim+1) do
 name:=Latt[1]; R:=Latt[2]; rootsRep:=Latt[3];
 R2:= S cat [-s : s in S] where S is [pair[1] : pair in ShortVectors(R,2,2)];
 Rs:= Dual(R: Rescale:=false);
 Rs3:= S cat [-s : s in S] where S is [pair[1]
  : pair in ShortVectors(Rs,3,3)];
 for r in rootsRep do
  if #[x : x in R2 | (x,r) eq 0] lt minRoots[dim] then
   continue r;
  end if;
  A:=[x : x in Rs3 | (x,r) eq 1];
  Gp:=Graph<#A | {{i,j} : i in [1..#A], j in [1..#A]
   | i lt j and (A[i],A[j]) eq -2}>;
  B:=[ A[Index(Random(c))] : c in Components(Gp)];
  G:=Graph<#B | {{i,j} : i in [1..#B], j in [1..#B]
   | i lt j and (B[i],B[j]) in {-1,0,1,2}}>;
  // Check degree of vertices before computing maximal clique
  ds:=DegreeSequence(G);
  if #ds lt maxsize[dim] or not &+[ds[i] : i in [maxsize[dim]..#ds]]
   ge maxsize[dim] then
   continue r;
  end if;
  mc:=#MaximumClique(G);
  if mc ge maxsize[dim] then
    printf "R=%o, there are %o maximum clique of size %o\n",
    name,#AllCliques(G,mc),mc;
  end if;
 end for;
end for;
\end{verbatim}


\section{Magma code: Inequivalence of $X_{15}$ and $X_{15}'$}
\label{magma5}
\begin{verbatim}
Q:=Rationals();
// X15
v:=[Q!-3,-3,1,1,1,1,1,1];
E7:=[Vector(vec) : vec in {[v[i^p] : i in [1..8]] : p in Sym(8)}];
X7:=[Vector([1/2*x[i] : i in [1..7]])
 : x in CartesianPower({Q!-1,1},7) | #{i : i in [1..7] | x[i] eq 1} mod 2 eq 0];
B7:=[Vector([i eq j select s else 0 : j in [1..7]]) : i in [1..7], s in [Q!-1,1]];
X15:=[* [*a,b,Vector([0])*] : a in B7, b in E7 *]
 cat [*[*a,Vector([Q!0,0,0,0,0,0,0,0]),Vector([1])*] : a in X7 *];
G15:=Matrix([[2/5*(x[1],y[1])+1/40*(x[2],y[2])+3/10*(x[3],y[3])
 : x in X15] : y in X15]);
#[ [i,j] : i in [1..Nrows(G15)], j in [1..Ncols(G15)]
 | G15[i,j] in {-3/5, 3/5} and i lt j] eq 8316;

// X15'
G5:= Matrix([[i eq j select Q!3 else -1 : j in [1..10]]
 : i in [1..10]])+2*AdjacencyMatrix(LineGraph(CompleteGraph(5)));
X10:=[[x[i] : i in [1..10]] : x in CartesianPower({Q!-1,1},10)
 | x[1] eq 1 and #{i : i in [1..10] | x[i] eq -1} mod 2 eq 0];
B10:=[[i eq j select s else 0 : j in [1..10]] : i in [1..10], s in [Q!-1,1]];
X15p:=[[*0,Vector(x)*] : x in X10] cat [[*i,Vector(x)*]: i in [1..10], x in B10];
G15p:=1/5*Matrix([[i le #X10 select ( j le #X10 select (X15p[i][2],X15p[j][2])/2
 else (X15p[i][2],X15p[j][2])) else (j le #X10 select (X15p[i][2],X15p[j][2])
 else G5[X15p[i][1],X15p[j][1]]+2*(X15p[i][2],X15p[j][2]))
 : j in [1..#X15p]] : i in [1..#X15p]]);
#[ [i,j] : i in [1..Nrows(G15p)], j in [1..Ncols(G15p)]
 | G15p[i,j] in {-3/5, 3/5} and i lt j] eq 8460;

\end{verbatim}

\end{appendices}
 
\bibliographystyle{plain}
\bibliography{dpgl}

@article {delsarte,
    AUTHOR = {Delsarte, P. and Goethals, J. M. and Seidel, J. J.},
     TITLE = {Spherical codes and designs},
   JOURNAL = {Geometriae Dedicata},
  FJOURNAL = {Geometriae Dedicata},
    VOLUME = {6},
      YEAR = {1977},
    NUMBER = {3},
     PAGES = {363--388},
   MRCLASS = {05B99},
  MRNUMBER = {485471},
MRREVIEWER = {Michel\ Deza},
       DOI = {10.1007/bf03187604},
       URL = {https://doi.org/10.1007/bf03187604},
}

@article {cameron,
    AUTHOR = {Cameron, P. J. and Goethals, J.-M. and Seidel, J. J. and
              Shult, E. E.},
     TITLE = {Line graphs, root systems, and elliptic geometry},
   JOURNAL = {J. Algebra},
  FJOURNAL = {Journal of Algebra},
    VOLUME = {43},
      YEAR = {1976},
    NUMBER = {1},
     PAGES = {305--327},
      ISSN = {0021-8693},
   MRCLASS = {05C99 (17B20)},
  MRNUMBER = {441787},
MRREVIEWER = {M.\ Doob},
       DOI = {10.1016/0021-8693(76)90162-9},
       URL = {https://doi.org/10.1016/0021-8693(76)90162-9},
}

@article {ganzhinov,
    AUTHOR = {Ganzhinov, Mikhail and Szöllősi, Ferenc},
     TITLE = {Biangular lines revisited},
   JOURNAL = {Discrete Comput. Geom.},
  FJOURNAL = {Discrete \& Computational Geometry. An International Journal
              of Mathematics and Computer Science},
    VOLUME = {66},
      YEAR = {2021},
    NUMBER = {3},
     PAGES = {1113--1142},
      ISSN = {0179-5376,1432-0444},
   MRCLASS = {52C10 (05B30 94B05)},
  MRNUMBER = {4310607},
MRREVIEWER = {H.\ Kharaghani},
       DOI = {10.1007/s00454-021-00276-6},
       URL = {https://doi.org/10.1007/s00454-021-00276-6},
}

@article {bachoc,
    AUTHOR = {Bachoc, Christine and Vallentin, Frank},
     TITLE = {New upper bounds for kissing numbers from semidefinite
              programming},
   JOURNAL = {J. Amer. Math. Soc.},
  FJOURNAL = {Journal of the American Mathematical Society},
    VOLUME = {21},
      YEAR = {2008},
    NUMBER = {3},
     PAGES = {909--924},
      ISSN = {0894-0347,1088-6834},
   MRCLASS = {52C17 (90C22)},
  MRNUMBER = {2393433},
MRREVIEWER = {A.\ Florian},
       DOI = {10.1090/S0894-0347-07-00589-9},
       URL = {https://doi.org/10.1090/S0894-0347-07-00589-9},
}

@book {ebeling,
    AUTHOR = {Ebeling, Wolfgang},
     TITLE = {Lattices and codes},
    SERIES = {Advanced Lectures in Mathematics},
   EDITION = {Third},
      NOTE = {A course partially based on lectures by Friedrich Hirzebruch},
 PUBLISHER = {Springer Spektrum, Wiesbaden},
      YEAR = {2013},
     PAGES = {xvi+167},
      ISBN = {978-3-658-00359-3; 978-3-658-00360-9},
   MRCLASS = {11H31 (11T71 51F15)},
  MRNUMBER = {2977354},
MRREVIEWER = {Caleb\ McKinley\ Shor},
       DOI = {10.1007/978-3-658-00360-9},
       URL = {https://doi.org/10.1007/978-3-658-00360-9},
}

@article {lemmens,
    AUTHOR = {Lemmens, P. W. H. and Seidel, J. J.},
     TITLE = {Equiangular lines},
   JOURNAL = {J. Algebra},
  FJOURNAL = {Journal of Algebra},
    VOLUME = {24},
      YEAR = {1973},
     PAGES = {494--512},
      ISSN = {0021-8693},
   MRCLASS = {05C30 (52A40)},
  MRNUMBER = {307969},
MRREVIEWER = {J.\ J.\ Burckhardt},
       DOI = {10.1016/0021-8693(73)90123-3},
       URL = {https://doi.org/10.1016/0021-8693(73)90123-3},
}

@article {cao,
    AUTHOR = {Cao, Meng-Yue and Koolen, Jack H. and Munemasa, Akihiro and
              Yoshino, Kiyoto},
     TITLE = {Maximality of {S}eidel matrices and switching roots of graphs},
   JOURNAL = {Graphs Combin.},
  FJOURNAL = {Graphs and Combinatorics},
    VOLUME = {37},
      YEAR = {2021},
    NUMBER = {5},
     PAGES = {1491--1507},
      ISSN = {0911-0119,1435-5914},
   MRCLASS = {05C50 (05C22)},
  MRNUMBER = {4331063},
MRREVIEWER = {James\ McKee},
       DOI = {10.1007/s00373-021-02359-w},
       URL = {https://doi.org/10.1007/s00373-021-02359-w},
}

@article {ganzhinov2,
    AUTHOR = {Ganzhinov, Mikhail},
     TITLE = {Infinite families of optimal systems of biangular lines
              related to representations of {${\rm SL}(2,\Bbb F_q)$}},
   JOURNAL = {J. Combin. Theory Ser. A},
  FJOURNAL = {Journal of Combinatorial Theory. Series A},
    VOLUME = {192},
      YEAR = {2022},
     PAGES = {Paper No. 105656, 21},
      ISSN = {0097-3165,1096-0899},
   MRCLASS = {20G05 (05E10 20G40)},
  MRNUMBER = {4457402},
MRREVIEWER = {Antonio\ Cossidente},
       DOI = {10.1016/j.jcta.2022.105656},
       URL = {https://doi.org/10.1016/j.jcta.2022.105656},
}

@book {humphreys,
    AUTHOR = {Humphreys, James E.},
     TITLE = {Introduction to {L}ie algebras and representation theory},
    SERIES = {Graduate Texts in Mathematics},
    VOLUME = {9},
      NOTE = {Second printing, revised},
 PUBLISHER = {Springer-Verlag, New York-Berlin},
      YEAR = {1978},
     PAGES = {xii+171},
      ISBN = {0-387-90053-5},
   MRCLASS = {17Bxx},
  MRNUMBER = {499562},
MRREVIEWER = {I.\ P.\ Shestakov},
}

\end{document}